\DeclareMathOperator{\matArtal}{Mat}
\DeclareMathOperator{\singArtal}{Sing}
\DeclareMathOperator{\discArtal}{disc}
\DeclareMathOperator{\galArtal}{Gal}
\DeclareMathOperator{\cmbArtal}{Comb}
\DeclareMathOperator{\grArtal}{gr}
\newcommand{\abs}[1]{\left\lvert #1\right\rvert}
\newtheorem{theorem}{Theorem}[section]
\newtheorem{proposition}[theorem]{Proposition}
\newtheorem{corollary}[theorem]{Corollary}
\newtheorem{problem}[theorem]{Problem}
\newtheorem{lemma}[theorem]{Lemma}
\newtheorem{ppts}[theorem]{Properties}
\newtheorem{paso}[theorem]{Step}
\newtheorem{cjt}[theorem]{Conjecture}
\theoremstyle{definition}
\newtheorem{definition}[theorem]{Definition}
\newtheorem{example}[theorem]{Example}
\newtheorem{ntc}[theorem]{Step}
\newtheorem{cnt}[theorem]{Construction}
\theoremstyle{remark}
\newtheorem{remark}[theorem]{Remark}
\title[Superisolated singularities]{Superisolated singularities and friends}   
\author[E. Artal]{Enrique Artal Bartolo}
\address{Departamento de Matem\'{a}ticas, IUMA,
Universidad de Zaragoza,
C.~Pedro Cerbuna 12, 50009, Zaragoza, Spain
}
\email{\href{mailto:artal@unizar.es}{artal@unizar.es}}
\urladdr{\url{http://riemann.unizar.es/~artal}}
\thanks{Partially supported by MCIN/AEI/10.13039/501100011033 (grant code: PID2020-114750GB-C31)
and by Departamento de Ciencia, Universidad y Sociedad del Conocimiento del Gobierno de Arag{\'o}n
(grant code: E22\_20R: ``{\'A}lgebra y Geometr{\'i}a'').}
\begin{document}

\begin{abstract}
Superisolated surface
singularities in~$(\mathbb{C}^3,0)$ were introduced by I.~Luengo to prove that
the $\mu$-constant stratum may be singular. The main feature of this 
family is that it can bring information from the projective plane curves
(global setting but smaller dimension) into surface singularities. 
They are simple enough to allow to retreive information and complicated enough
to offer a variety of properties. The so-called L{\^e}-Yomdin singularities
are a generalization which offers a wider catalog of examples. We study 
some properties of these singularities, mainly topological and related with 
the monodromy, and we introduce another family which exploits the same properties
but in the quasi-homogeneous setting.
\end{abstract}

\maketitle


\tableofcontents


\section{Introduction}

This work is devoted to the relationship between two subjects: projective plane curves and germs of surface singularities in $\mathbb{C}^3$.
A projective plane curve is defined by a homogeneous polynomial in three variables, and hence it defines 
a germ of (homogeneous) surface singularity in $(\mathbb{C}^3,0)$. This germ will be in general non-isolated (actually it will be isolated
if and only if the curve is smooth). On the other side, a germ of surface singularity in $(\mathbb{C}^3,0)$ has associated a projective
plane curve, its tangent cone.

Probably the first approach to this idea was done by Y.~Yomdin~\cite{Yomdin74} (see also~\cite{Le80}).
Starting from a reduced homogeneous function $F_d(x,y,z)\in\mathbb{C}[x,y,z]$ of degree~$d$, Yomdin proposes an
\emph{asymptotical} approach using \emph{generic} functions $F_{d+k}(x,y,z)$, $k\in\mathbb{N}$, such that 
the zero locus $V$ of $F_d+F_{d+k}$ has an 
isolated singularity at the origin (there is a more concrete condition which will be made explicit in the text).

Some years later, I.~Luengo used similar ideas in~\cite{Luengo87} to study the smoothness of the $\mu$-constant stratum
of surface singularities in $\mathbb{C}^3$. He defined the family of \emph{superisolated singularities} as the singularities
which are (abstractly) resolved with one blow-up. Actually, they coincide with the Yomdin (L{\^e}-Yomdin from now on)
singularities for $k=1$. While being a particular case, they can be defined in a more intrinsic way  and he made a thorough study of their 
properties. 

The main idea of Luengo is to translate properties of the projective plane curve $C_d$ into properties 
of a superisolated singularity whose tangent cone is~$C_d$, allowing to derive properties of singularities (local, dimension~$2$ embedded in dimension~$3$) from properties of plane curves (global, dimension~$1$ embedded in dimension~$2$).

As a consequence, Luengo proved that the $\mu$-constant stratum of a germ of isolated singularities in $\mathbb{C}^3$ may be non smooth; note that the $\mu$-constant stratum of a germ of an isolated singularity in $\mathbb{C}^2$ is smooth (see e.g.~\cite{wahl:76}). Examples of singular strata were found among superisolated singularities, and the key point of the proof came from the properties
of the tangent cones as projective plane curves.

In the eighties, S.S.-T.~Yau~\cite{yau:89} stated the following conjecture. The abstract topology of the link of a germ of isolated singularity in $\mathbb{C}^3$ and the characteristic polynomial of the monodromy determines the embedded topology of the germ, see Conjecture~\ref{cjt:yau} for a precise statement. The conjecture was verified in important infinite families of singularities. In the nineties, an embedded
resolution of superisolated singularity was found in~\cite{ea:mams}. With this tool the link of the singularity~$V$ can be computed; actually it was already done~\cite{Luengo87} and it can be read from~$C_d$. The geometric monodromy of the Milnor fibration 
of $V$ can also be described as some of its properties. Namely the characteristic polynomial of the action of the monodromy
on the cohomology of the Milnor fibration (previously computed by J.~Stenvens~\cite{Stevens:89}) and also its Jordan form.
How the properties of $C_d$ are related to the above properties of $V$
can be used to discard the conjecture.
The purpose of this work 
is to show this parallelism in order to present how they benefit from each other, and also to present
problems in this field which are open since the nineties.

In the nineties, W.D.~Neumann and J.~Wahl studied the classic Casson's conjecture and refined it in the case of links of 
germs of isolated singularities in $\mathbb{C}^3$, supporting the conjectures by large families of examples. These conjectures
were generalized by A.~N{\'e}methi and L.I.~Nicolaescu to conjectures involving finer invariants such as Seiberg-Witten's.
These conjectures are related to oriented closed $3$-manifolds whose link is a $\mathbb{Q}$-homology sphere. 
In~\cite{FLMN:06} characterized the superisolated singularities of surface whose link is a $\mathbb{Q}$-homology sphere in terms of the tangent cone, leading mainly to rational cuspidal curves. As for the previous
problems, the properties of those curves furnished counterexamples to these conjectures and gave key ideas to reformulate problems.

These are three concrete examples of how superisolated singularities have helped to the study of more general singularities of surface. There is 
survey in 2006~\cite{alm:06}, which develop also these ideas, mainly from an algebraic point of view.
In this work we recall some of these properties but we focus specially on topological properties of the singularities
coming from topological properties of the tangent cones, and we extend the results to L{\^e}-Yomdin singularities,
since having a larger family it is likely that more problems can be attacked in the future.

We may study this problem in higher dimensions, but since there are quite interesting problems as it is, we will focus on surface singularities.
In this work we will present some other generalizations. As in the homogeneous case, a \emph{weighted} projective plane curves is defined 
by a quasi-homogeneous function in three variables, and hence it defines also a germ of singularity in $(\mathbb{C}^3,0)$. In most cases, this germ
has non isolated singularities. Adding \emph{generic} terms of weighted degree greater than the original one, the singularity
becomes isolated and under this genericity conditions, its properties depend on the weighted projective curve. These ideas lead 
in~\cite{ABLM-milnor-number} to the definition of the \emph{weighted L{\^e}-Yomdin singularities}.

In~\S\ref{sec:def}, we introduce the main concepts, notations and first properties of superisolated and L{\^e}-Yomdin singularities.
In~\S\ref{sec:zp} is devoted to the other object of interests, the projective plane curves; we define the notions of combinatorics
and Zariski pairs. The combinatorics of the tangent cone of a superisolated singularity is the main ingredient to determine its link; this is recalled in~\S\ref{sec:at}, where the links of superisolated and L{\^e}-Yomdin singularities are described.
In~\S\ref{sec:mon} we introduce the main topological invariant for germs of hypersurface singularities, the Milnor fibration and its monodromy;
we recall the classical theory and introduce the necessary notations. Monodromy have been defined, but in general its computation is a challenge;
embedded resolutions, described in~\S\ref{sec:res}, provide a model for the monodromy. 
Steenbrink used an embedded resolution to endow the cohomology of the Milnor fiber with a Mixed Hodge Structure. One ingredient of this Mixed Hodge
Structure is the weight filtration which is topological nature; \S\ref{sec:res} ends with the description of this weight filtration
and its relationship with the monodromy, some algebraic aspects have been pushed to\S\ref{sec:peso}.

There are two main goals in~\S\ref{sec:qres}. The first one is to describe an embedded resolution for superisolated and L{\^e}-Yomdin singularities;
this is very long for superisolated singularities and actually undone for weighted L{\^e}-Yomdin singularities and this is the reason of the second goal, the definition
of $\mathbb{Q}$-resolutions which allow to simplify its presentation for superisolated singularities and to effectively construct one such embedded $\mathbb{Q}$-resolutions for L{\^e}-Yomdin singularities. Moreover, we see that Steenbrink theory is also valid for $\mathbb{Q}$-resolutions. This section needs 
the material in \S\ref{sec:quot} and~\S\ref{sec:wbu}.

The next sections are more devoted to problems than two results. The previous study of the monodromy only holds for fields of characteristic~$0$;
\S\ref{sec:z} deals with a method of M.~Escario to compute the monodromy over~$\mathbb{Z}$ and relates it to a question of N.~A'Campo.
In~\S\ref{sec:pb}, we state some open topological problems for superisolated singularities. We end with~\S\ref{sec:wlys} introducing the weighted
L{\^e}-Yomdin singularities where projective plane curves are replaced by weighted projective plane curves. 
There is a still some work to do about these singularities, so they are only introduced; they have been helpful to solve problems
where superisolated and L{\^e}-Yomdin singularities fail, so they enlarge the options of future work.

Summarizing, all these type of singularities are simple enough to allow a thorough study and complicated enough to enlighten deep properties of singularities.

\section{Definitions and first properties}\label{sec:def}

\begin{definition}[Luengo \cite{Luengo87}]
Let $(V,0)\subset(\mathbb{C}^3,0)$ be a germ of isolated surface singularity\index{singularity}. We say that 
$V$ is a \emph{superisolated surface\index{singularity!superisolated} singularity} (SIS) if it is 
resolved after one blow-up.
\end{definition}

More precisely, let $\sigma:(\hat{\mathbb{C}}^3,E)\to(\mathbb{C}^3,0)$ be the blow-up of the origin, i.e.,
\[
\hat{\mathbb{C}}^3:=\{(p,\ell)\in\mathbb{C}^3\times\mathbb{P}^2\mid p\in\ell\}, \quad E:=\{0\}\times\mathbb{P}^1,
\]
and $\sigma$ is the restriction of the first projection; we identify $\mathbb{P}^2$ with the set of lines through~$0$.
The \emph{strict transform} of $V$ is $\hat{V}:=\overline{\sigma^{-1}(V\setminus\{0\})}$, and the restriction
$\hat\sigma:\hat{V}\to V$ is the blow-up of $V$. Hence $V$ is superisolated if $\hat{V}$ is smooth.
There is a simple characterization of SIS.

\begin{proposition}[\cite{Luengo87}]\label{prop:sis_char}
Let us suppose that $V$ is the zero locus of $F(x,y,z)\in\mathbb{C}\{x,y,z\}$. Let
\begin{equation}\label{eq:desc_hom}
F(x,y,z)= F_d(x,y,z) + \sum_{m>d} F_m(x,y,z),\qquad F_d\neq 0,
\end{equation}
be the decomposition in homogeneous forms (the subscript stands for the degree). Let $C_m:=V_\mathbb{P}(F_m)\subset\mathbb{P}^2$
be the projective zero locus of $F_m$ (a maybe non-reduced curve if $F_m\neq 0$).

Then, $V$ is SIS if and only if $\singArtal(C_d)\cap C_{d+1}=\emptyset$.
\end{proposition}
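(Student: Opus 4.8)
The plan is to work in the blow-up $\hat{\mathbb{C}}^3$ using the standard affine charts and to analyze when the strict transform $\hat V$ is smooth, translating this condition into the geometric statement about $C_d$ and $C_{d+1}$. First I would set up coordinates: on the chart where, say, $x\neq 0$, the blow-up is given by $(x,y,z)=(u, uv, uw)$, and the exceptional divisor $E$ is $\{u=0\}$. Points of $E$ correspond to directions $[1:v:w]\in\mathbb{P}^2$, and the affine coordinate $u$ measures the distance along the line. Substituting into the homogeneous decomposition~\eqref{eq:desc_hom} and using $F_m(u,uv,uw)=u^m F_m(1,v,w)$, the total transform $\sigma^*F$ factors as $u^d$ times $\bigl(F_d(1,v,w)+u\,F_{d+1}(1,v,w)+u^2(\cdots)\bigr)$, so the strict transform $\hat V$ is the zero locus of
\[
G(u,v,w):=F_d(1,v,w)+u\,F_{d+1}(1,v,w)+\sum_{m>d+1}u^{m-d}F_m(1,v,w).
\]

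Next I would identify $\hat V\cap E$ with the tangent cone $C_d=V_{\mathbb{P}}(F_d)$ inside $E\cong\mathbb{P}^2$: setting $u=0$ gives exactly $F_d(1,v,w)=0$. Since the singularity is isolated the only issue is smoothness along this intersection, so I would compute the Jacobian of $G$ at a point $(0,v_0,w_0)$ with $F_d(1,v_0,w_0)=0$. The partial derivatives $\partial_v G$ and $\partial_w G$ at $u=0$ are $\partial_v F_d(1,v_0,w_0)$ and $\partial_w F_d(1,v_0,w_0)$, while $\partial_u G$ at $u=0$ equals $F_{d+1}(1,v_0,w_0)$. Thus $\hat V$ is singular at such a point exactly when all three vanish: the two $F_d$-derivatives vanishing (together with Euler's relation, which shows the third homogeneous partial is then also zero) says the point is a singular point of $C_d$, i.e. lies in $\singArtal(C_d)$, and the condition $F_{d+1}(1,v_0,w_0)=0$ says it lies on $C_{d+1}$.

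Putting these together, $\hat V$ has a singular point on $E$ precisely when there is a point lying in $\singArtal(C_d)\cap C_{d+1}$, which gives the stated equivalence. I would finish by checking the two remaining points for rigor: first, that away from $E$ the map $\hat\sigma$ is an isomorphism onto $V\setminus\{0\}$ so no new singularities appear off the exceptional divisor; and second, the chart-independence, i.e. that repeating the computation in the charts $y\neq 0$ and $z\neq 0$ yields the same intrinsic condition on $C_d$ and $C_{d+1}$ as subsets of $\mathbb{P}^2$, which is clear since $\singArtal(C_d)$ and $C_{d+1}$ are projectively defined.

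The main obstacle is the careful handling of Euler's homogeneity relation to pass from the two affine-chart derivatives of $F_d$ to the genuine singularity condition on the projective curve $C_d$: one must verify that a point of $C_d$ at which $\partial_v F_d$ and $\partial_w F_d$ vanish is genuinely a singular point of the projective curve (all three homogeneous partials vanish), rather than an artifact of the chart, and that the isolated-singularity hypothesis on $V$ guarantees $\hat V$ is reduced so that ``smooth'' is equivalent to ``nonsingular at every point of $E$.''
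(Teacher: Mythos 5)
Your proof is correct and takes essentially the same route as the paper: the proposition is stated there with a citation to Luengo, but the paper's own version of the computation appears in Remark~2.4, which works in the same blow-up chart and obtains the local equation $f_d(x,y,1)+z_1^k=0$ of $\hat V$ by absorbing the unit $f_{d+k}+z f_{d+k+1}+\dots$ (nonvanishing precisely because $P\notin C_{d+k}$) into the coordinate $z$. Your Jacobian-criterion-plus-Euler-relation formulation is just an equivalent organization of that same chart computation, and the side points you flag (reducedness of the strict transform, smoothness off $E$, chart independence) are handled correctly.
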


This proposition relates SIS and L{\^e}-Yomdin singularities.

\begin{definition}\label{dfn:lys}
Let $(V,0)\subset(\mathbb{C}^3,0)$ be a germ of isolated surface singularity. We say that 
$V$ (zero-locus of $F(x,y,z)\in\mathbb{C}\{x,y,z\}$) is a \emph{L{\^e}-Yomdin singularity\index{singularity!L{\^e}-Yomdin}} of order~$k$ ($k-$LYS) if
\[
F(x,y,z)= F_d(x,y,z) + \sum_{m\geq d + k} F_m(x,y,z),\qquad F_d\neq 0,
\]
and $\singArtal(C_d)\cap C_{d+k}=\emptyset$.
\end{definition}

\begin{remark}\label{rem:local_eq}
Hence SIS is equivalent to $1$-LYS.
It is interesting to study the above proposition and definition in local coordinates. Let us assume that 
$P=[0:0:1]$ is a point of the tangent cone\index{tangent cone} $C_d$. In particular $[0:0:1]=((0,0,0),[0:0:1])\in\hat{V}\cap E$
and it is the image of the origin of $\mathbb{C}^3$ by the chart 
\begin{equation}\label{eq:chart-blowup}
\begin{tikzcd}[row sep=0,/tikz/column 1/.append style={anchor=base east},/tikz/column 2/.append style={anchor=base west}]
\mathbb{C}^3\rar["\Phi"]&\hat{\mathbb{C}}^3\\
(x,y,z)\rar[mapsto]&{((x z, y z,z),[x:y:1])}.
\end{tikzcd}
\end{equation}
Under these conditions $E$ has $z=0$ as equation while the total transform $\sigma^*V$ has as equation
\[
z^d(f_d(x,y,1) + z^k(f_{d+k}(x,y,1) +z f_{d+k+1}(x,y,1)+\dots))=0.
\]
If we replace $z$ by the product $z_1$ of $z$ and a suitable unit at the origin, the local equation of $E$ is still $z_1=0$
while the local equation of $\hat{V}$ is $f_{d}(x,y,1) +z_1^k=0$. For later use, let us denote $f_d(x,y,1)$ as $f_{d,P}$. 
Then, 
\begin{enumerate}[label=(Loc\arabic{enumi})]
\item If $P$ is a smooth point of $C_d$, then $P$ is a smooth point of $\hat{V}$ and $E\pitchfork\hat{V}$.
\item If $P$ is a singular point of $C_d$ and $k=1$, then $P$ is a smooth point of $\hat{V}$ and $E\not\pitchfork\hat{V}$.
\item If $P$ is a singular point of $C_d$ and $k>1$, then $P$ is a singular point of $\hat{V}$.
\item\label{loc4} In all these cases, the local topological type at $P$ of $E\cup\hat{V}$ is determined by the topological type of $(C_d, P)$ and $k$.
\end{enumerate}
\end{remark}

\begin{remark}
As a first consequence the tangent cone\index{tangent cone} of a LYS is a reduced curve. The idea behind $k$-LYS is to think about 
them as series with $k\gg 0$, while a SIS can be interpreted as a \emph{generic} isolated singularity
with a fixed tangent cone. In the case of non-reduced curves, A.~Melle~\cite{MH00} defined the $t$-singularities\index{singularity!$t$-singularity}
with the same ideas, both in the definition of \emph{generic} singularities with fixed tangent cone
and the definition of an infinite series of singularities. The main feature is that one can stratify
the tangent cone such that \ref{loc4} is essentially true and the topological type is constant at each stratum.
\end{remark}

Let us come back to the discussion of Remark~\ref{rem:local_eq}. The local singularity of $(\hat{V},P)$ is an interesting type of singularity
which has been thoroughly studied.

\begin{definition}
An isolated germ of hypersurface in $\mathbb{C}^3$ is a \emph{$k$-suspension}\index{suspension}
if it admits an equation of the type $z^k-f(x,y)=0$, where $f$ defines an isolated germ of curve singularity.
\end{definition}

Note that any isolated singularity of multiplicity~$2$ is a $2$-suspension, which have been thoroughly studied~\cite{Laufer:71,Laufer:78}.
It is not the case for higher multiplicities.
 \section{Combinatorics and Zariski pairs}\label{sec:zp}

A key component of SIS and $k$-LYS is the tangent cone\index{tangent cone} which is a reduced projective plane curve. Let us introduce some concepts
which will be useful later.

Let $C=C^1\cup\dots\cup C^r$ be the irreducible decomposition of a reduced projective plane curve; we will denote $d$ its degree, and $d_i$
the degree of~$C^i$, i.e., $d=d_1+\dots+d_r$.
A consequence of Zariski-van Kampen theory is that 
\[
H_1(\mathbb{P}^2\setminus C;\mathbb{Z})\cong\mathbb{Z}^{r-1}\oplus\left(\mathbb{Z}/\gcd(d_1,\dots,d_r)\right).
\]
Consider the minimal embedded resolution\index{resolution}
$\rho:Y\to\mathbb{P}^2$ of the points in $\singArtal'(C)$,
the singular points
which are not nodes belonging to distinct irreducible components of $C$.

\begin{definition}
The \emph{combinatorics}\index{combinatorics} of $C$ is the dual graph $\Gamma_C$ of $\rho^{-1}(C)$ decorated by the self-intersections
and where the vertices corresponding to the strict transforms are marked.
\end{definition}

For an irreducible curve the combinatorics determines and is determined by the degree and the topological types of the singularities.
A more sophisticated example is in Figure~\ref{fig:cremona}.

\begin{lemma}
The genera of the irreducible components corresponding to the vertices can be recovered from the combinatorics.
\end{lemma}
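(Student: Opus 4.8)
The plan is to apply adjunction on each smooth strict transform and to check that every intersection number it involves is either a decoration/edge of $\Gamma_C$ or is linearly determined by them. Write $\rho^{-1}(C)=\tilde C^1\cup\dots\cup\tilde C^r\cup E_1\cup\dots\cup E_s$, where the $\tilde C^i$ are the strict transforms (the marked vertices) and the $E_j$ are the exceptional curves. The self-intersections $(\tilde C^i)^2$, $(E_j)^2$ and the mutual intersection numbers $\tilde C^i\cdot E_j$, $E_j\cdot E_k$, $\tilde C^i\cdot\tilde C^{i'}$ are exactly the data carried by $\Gamma_C$. Since the points of $\singArtal'(C)$ that are \emph{not} blown up are ordinary nodes lying on two distinct transverse smooth branches, each individual component stays smooth there; hence every $\tilde C^i$ is a smooth projective curve and $g(\tilde C^i)$ is the geometric genus of $C^i$. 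By adjunction on the smooth surface $Y$,
\[
2g(\tilde C^i)-2=(\tilde C^i)^2+K_Y\cdot\tilde C^i,
\]
so, as $(\tilde C^i)^2$ is a decoration, everything reduces to computing $K_Y\cdot\tilde C^i$ from $\Gamma_C$.

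The key structural fact is that the intersection matrix $N:=(E_j\cdot E_k)_{j,k}$ of the exceptional curves of a birational morphism of smooth surfaces is negative definite, hence invertible; its entries are read from $\Gamma_C$. I would exploit this twice. First, writing $K_Y=\rho^*K_{\mathbb{P}^2}+\sum_j a_j E_j$ and intersecting with each $E_k$, the relations $\rho^*K_{\mathbb{P}^2}\cdot E_k=0$ and $K_Y\cdot E_k=-2-(E_k)^2$ (adjunction for the rational curve $E_k$) give the linear system $\sum_j a_j(E_j\cdot E_k)=-2-(E_k)^2$, i.e.\ $N\mathbf{a}=\mathbf{b}$ with $\mathbf{b}$ combinatorial; since $N$ is invertible the discrepancies $a_j$ are recovered. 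Second, to pin down the degree $d_i$ of $C^i$, I write the total transform $\rho^*C^i=\tilde C^i+\sum_j m_j^i E_j$; intersecting with each $E_k$ and using $\rho^*C^i\cdot E_k=0$ yields $N\mathbf{m}^i=-(\tilde C^i\cdot E_k)_k$, again solvable, and then $d_i^2=(\rho^*C^i)^2=(\tilde C^i+\sum_j m_j^i E_j)^2$ is a combinatorial expression, so $d_i$ is its positive square root.

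With the $a_j$ and $d_i$ in hand, I conclude by expanding
\[
K_Y\cdot\tilde C^i=\rho^*K_{\mathbb{P}^2}\cdot\tilde C^i+\sum_j a_j(E_j\cdot\tilde C^i)=-3d_i+\sum_j a_j(E_j\cdot\tilde C^i),
\]
using $\rho^*K_{\mathbb{P}^2}\cdot\tilde C^i=K_{\mathbb{P}^2}\cdot C^i=-3d_i$; every term on the right is now known, so adjunction delivers $g(\tilde C^i)$. I expect the main obstacle to be purely bookkeeping: carefully justifying that the dual graph really records all the off-diagonal intersection numbers $E_j\cdot E_k$ and $\tilde C^i\cdot E_j$ (including possible repeated or non-transverse intersections), and invoking negative definiteness of $N$ to guarantee both linear systems are solvable. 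Once these points are settled, the genus drops out of adjunction with no further geometry.
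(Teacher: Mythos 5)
Your proof is correct, but it takes a genuinely different route from the paper's. The paper recovers $d_i$ from $\hat{C}_i^2$ by undoing the contractions (essentially your second linear system in step-by-step form), and then computes the genus \emph{downstairs} in $\mathbb{P}^2$ via the classical genus formula
\[
g(C_i)=\frac{(d_i-1)(d_i-2)}{2}-\sum_{p\in\singArtal(C_i)}\delta_p(C_i),\qquad
\delta_p(C_i)=\frac{\mu_p(C_i)-r_p(C_i)+1}{2},
\]
where $\mu_p$ and $r_p$ are read from the connected component of non-marked vertices attached to the marked vertex; this implicitly uses the classical fact that the local resolution graph determines the topological type of a plane curve singularity. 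You instead stay \emph{upstairs} on the resolved surface $Y$ and apply adjunction to the smooth strict transform, recovering $K_Y\cdot\tilde C^i$ by solving for the discrepancies through the negative-definite (indeed unimodular) intersection matrix $N$ of the exceptional curves. Your route trades the singularity-theoretic input ($\mu_p$, $r_p$, $\delta_p$) for pure intersection theory — adjunction, the projection formula, invertibility of $N$ — which makes it more self-contained and applicable verbatim to curves on other rational surfaces; the paper's argument is shorter and makes explicit the link between the graph and the classical local invariants. One slip of wording: the points that are \emph{not} blown up are the singular points of $C$ \emph{not} in $\singArtal'(C)$ (the nodes lying on two distinct components), not points of $\singArtal'(C)$; your conclusion that each $\tilde C^i$ is smooth is nevertheless right, since every singular point of an individual component $C^i$ does belong to $\singArtal'(C)$ and is therefore resolved by $\rho$.
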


\begin{proof}
The non-marked vertices correspond to rational curves. Let $v$ be a marked vertex corresponding to
the strict transform $\hat{C}_i$ of $C_i$.
Let us consider the connected components of non-marked vertices. They correspond to the
points in $\singArtal'(C)$. Knowing $\hat{C}_i^2$ and the contractions we can recover $C_i^2$ which is $d_i^2$.
Note that $\singArtal'(C)\cap C_i\supset\singArtal(C_i)$; we can compute for $p\in\singArtal(C_i)$
\[
\delta_p(C_i)=\frac{\mu_p(C_i)-r_p(C_i)+1}{2}
\]
where $\mu_p(C_i)$ is the Milnor number and $r_p(C_i)$ is the number of local branches. Then,
the genus of $C_i$ equals
\[
\frac{(d_i-1)(d_i-2)}{2}-\sum_{p\in\singArtal(C_i)} \delta_p(C_i)
\]
and the result follows.
\end{proof}

\begin{proposition}\label{prop:local_topology}
Let us $C', C''$ be two plane curves. They have the same combinatorics if and only if there are regular neighbourhoods $N(C'),N(C'')$
and a homeomorphism of pairs $(N(C'),C')\to(N(C''),C'')$.
\end{proposition}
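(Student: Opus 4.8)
The plan is to regard the combinatorics $\Gamma_C$ as the decorated dual graph of a plumbing and to prove that it is a complete invariant of the homeomorphism type of the pair $(N(C),C)$. Two classical ingredients carry the argument: locally, the equivalence between the embedded topological type of a plane curve germ and its embedded resolution graph; globally, the plumbing construction of Mumford and Neumann, which reconstructs a regular neighbourhood of a normal crossing configuration from its weighted dual graph. The first reduction is to pass from $C\subset\mathbb{P}^2$ to the total transform $\rho^{-1}(C)\subset Y$. Since $\rho$ resolves only the points of $\singArtal'(C)$ and leaves the nodes between distinct components untouched, $\rho^{-1}(C)$ is a normal crossing divisor whose dual graph—decorated by self-intersections and genera and with the strict transforms marked—is precisely $\Gamma_C$. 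As $\rho$ is an isomorphism away from the exceptional locus, and blowing up a point inside a regular neighbourhood alters its interior but not its boundary, $\rho$ induces a homeomorphism $\partial N(C)\cong\partial N(\rho^{-1}(C))$ and, after contracting the non-marked configuration, a homeomorphism of pairs $(N(\rho^{-1}(C)),\rho^{-1}(C))\to(N(C),C)$. Thus $(N(C),C)$ is recovered, up to homeomorphism of pairs, from $(N(\rho^{-1}(C)),\rho^{-1}(C))$ together with the marking.

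For the implication ``same combinatorics $\Rightarrow$ homeomorphic pairs'' I would then invoke plumbing: the regular neighbourhood of the normal crossing divisor $\rho^{-1}(C)$ in the smooth surface $Y$ is the plumbed $4$-manifold attached to $\Gamma_C$, with the components sitting inside as the zero-sections. Since $\Gamma_{C'}=\Gamma_{C''}$ carries identical vertices, self-intersection and genus weights, edges and markings, the two plumbing pairs are homeomorphic by a homeomorphism respecting the marking; contracting the non-marked part on both sides, as in the reduction above, produces the desired homeomorphism $(N(C'),C')\to(N(C''),C'')$.

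For the converse I would read $\Gamma_C$ off the topology of the pair. A homeomorphism of pairs sends irreducible components to irreducible components—these are the closures of the connected components of $C\setminus\singArtal(C)$, hence topologically intrinsic—so it preserves their genera (already handled by the previous lemma) and their incidences, and it restricts near each singular point to a homeomorphism of the local embedded pair. By the classical correspondence between the embedded topological type of a plane curve germ and its resolution graph, the non-marked subconfigurations resolving each point of $\singArtal'(C)$ are then determined, and the global incidence fixes how they attach to the marked vertices. The self-intersection weights, finally, are the Euler numbers of the plumbing pieces, that is, framings visible in $\partial N(C)$, so they too are invariants of the pair.

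The main obstacle is precisely this last point in the converse: the recovery of the self-intersection decorations. A priori these are algebraic data, $\hat C_i^2=d_i^2-\sum_p m_p^2$, and one must argue that they are detected by the homeomorphism type of $(N(C),C)$ alone. Here the uniqueness half of Neumann's plumbing calculus is needed, to ensure that the decorated graph $\Gamma_C$—and not merely its boundary graph manifold—is pinned down; one has to verify that resolving only $\singArtal'(C)$ yields a graph to which the calculus applies unambiguously once the markings are fixed.
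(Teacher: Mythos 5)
Your proposal is correct and follows essentially the same route as the paper: the forward implication is the plumbing construction of a model for $(N(C),C)$ from the decorated graph (tubular neighbourhoods of the components of $\rho^{-1}(C)$, plumbed at the double points, then contraction of the non-marked configuration), and the converse rests on Waldhausen's theory of graph manifolds together with Neumann's plumbing calculus, which is exactly the ingredient you flag as the crux for recovering the self-intersection decorations. The only presentational difference is that you spell out the intermediate recovery of components, genera (via the preceding lemma), incidences and local embedded types before invoking the calculus, whereas the paper cites Waldhausen and Neumann for the whole converse at once.
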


\begin{proof}
Let us consider the graph $\Gamma$ of the combinatorics of a curve $C$. Let us construct a model for
$(N(C),C)$ from $\Gamma$. Consider the minimal resolution $\Gamma$. We can pick up a tubular neighbourhood
$T(C_v)$ for each irreducible component of $\rho^{-1}(C)$ where $v$ is the corresponding vertex. This is
an oriented disk-bundle onto an oriented closed Riemann surface of genus $g_v$ and Euler number $e_v$, i.e.,
determined by $\Gamma$. We can choose these tubular neighbourhoods such that for each double point
of $\rho^{-1}(C)$ the intersection of the two tubular neighbourhoods is a polydisk in $\mathbb{C}^2$ and the curves
are the intersection with the axes. With this construction, the boundary of the
the union of the tubular neighbourhoods is a plumbing manifold~\cite{neu:81} with plumbing graph $\Gamma$.
The contraction of the singular points provide $N(C)$.

If two curves $C',C''$ have the same graph  the above construction provides a homeomorphism $(N(C'),C')\to(N(C''),C'')$. If such a homeomorphism exist, Waldhausen's theory of graph manifolds~\cite{wald:67} combined with
Neumann's plumbing calculus show that the marked graphs must coincide.
\end{proof}

\begin{example}
The combinatorics of an irreducible curve is equivalent to the degree and the topological type of the curves. In general, the marking is not necessary but we have to take it into account because of Cremona transformations.
Consider the curves
\[
C^1: x y z (xy + yz + xz)(x + 2 y) (y + 2 z) (z + 2 x),\
C^2: x y z (x + y + z)(2x +  y) (2y +  z) (2z +  x).
\]

\begin{figure}[ht]
\centering
\begin{tikzpicture}
\begin{scope}
\draw (0,0) circle [radius=1cm];
\coordinate (P) at (90:1);
\coordinate (Q) at (-30:1);
\coordinate (R) at (210:1);
\coordinate (P1) at ($.6*(Q) + .4*(R)$);
\coordinate (Q1) at ($.6*(R) + .4*(P)$);
\coordinate (R1) at ($.6*(P) + .4*(Q)$);

\draw ($1.3*(P) -.3*(Q)$) -- ($1.3*(Q) -.3*(P)$);
\draw ($1.3*(P) -.3*(R)$) -- ($1.3*(R) -.3*(P)$);
\draw ($1.3*(R) -.3*(Q)$) -- ($1.3*(Q) -.3*(R)$);
\draw ($1.4*(P) -.4*(P1)$) -- ($1.4*(P1) -.4*(P)$);
\draw ($1.4*(Q) -.4*(Q1)$) -- ($1.4*(Q1) -.4*(Q)$);
\draw ($1.4*(R) -.4*(R1)$) -- ($1.4*(R1) -.4*(R)$);
\end{scope}

\begin{scope}[xshift=3cm, yscale=-1, yshift=-.25cm]

\coordinate (P) at (90:1);
\coordinate (Q) at (-30:1);
\coordinate (R) at (210:1);
\coordinate (P1) at ($.6*(Q) + .4*(R)$);
\coordinate (Q1) at ($.6*(R) + .4*(P)$);
\coordinate (R1) at ($.6*(P) + .4*(Q)$);

\draw ($1.3*(P) -.3*(Q)$) -- ($1.3*(Q) -.3*(P)$);
\draw ($1.3*(P) -.3*(R)$) -- ($1.3*(R) -.3*(P)$);
\draw ($1.3*(R) -.3*(Q)$) -- ($1.3*(Q) -.3*(R)$);
\draw ($1.4*(P) -.4*(P1)$) -- ($1.4*(P1) -.4*(P)$);
\draw ($1.4*(Q) -.4*(Q1)$) -- ($1.4*(Q1) -.4*(Q)$);
\draw ($1.4*(R) -.4*(R1)$) -- ($1.4*(R1) -.4*(R)$);
\draw (-1.5,-.8) -- (1.5,0);
\end{scope}
\begin{scope}[xshift=6.5cm]
\coordinate (P1) at (90:1);
\coordinate (P2) at (210:1);
\coordinate (P3) at (-30:1);
\coordinate (Q1) at (150:1);
\coordinate (Q2) at (30:1);
\coordinate (Q3) at (-90:1);

\coordinate (R1) at (90:1/2);
\coordinate (R2) at (210:1/2);
\coordinate (R3) at (-30:1/2);

\coordinate (S) at (-1.2,.1);

\draw (P1) -- (Q2) -- (P3) -- (Q3) -- (P2) -- (Q1) -- cycle;
\draw (P1) -- (Q3);
\draw (P2) -- (Q2);
\draw (P3) -- (Q1);
\draw (R1) -- (R2) -- (R3) -- cycle;
\foreach \a in {1,2,3}
{
\draw (R\a) -- (S) -- (P\a);
\fill[gray!50!white] (P\a) circle [radius=.1cm];
\fill[gray] (Q\a) circle [radius=.1cm];
\fill (R\a) circle [radius=.1cm];
}
\fill (S) node[left] {$1$} circle [radius=.1cm];

\end{scope}
\end{tikzpicture}
\caption{Curves $C^1, C^2$ with common (unmarked) graph. The self-intersection of the undecorated black vertices is $0$;
the self-intersection of the gray vertices is $-1$.}
\label{fig:cremona}
\end{figure}
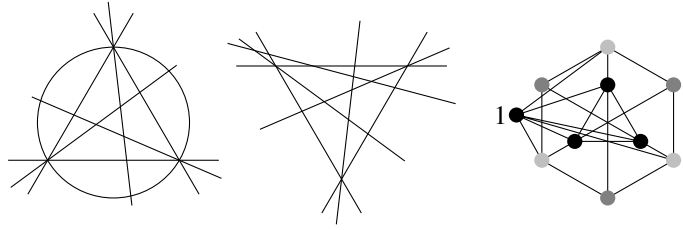
We pass from one to the other via the Cremona transformation $[x:y:z]\mapsto[yz:x z:xy]$.
The black vertices are always marked;
for $C^1$ (resp. $C^2$) the other marked vertices are the darker (resp. lighter) gray ones.
\end{example}

We can interpret Proposition~\ref{prop:local_topology} as follows: two curves with the same combinatorics have the same \emph{local topology}.

\begin{example}
In \cite{zr:29} O.~Zariski studied sextic curves with six ordinary cusps. These are irreducible curves.
He studied a subfamily with a special property, the six cusps were on a conic and showed that
for such a curve $C'$ one has that $\pi_1(\mathbb{P}^2\setminus C')\cong\mathbb{Z}/2*\mathbb{Z}/3$. In this paper
he showed also that if for such a sextic $C''$ the cusps were not in a conic,
then there was no epimorphism $\pi_1(\mathbb{P}^2\setminus C'')\to\mathfrak{S}_3$, in particular,
there is no homeomorphism $(\mathbb{P}^2,C')\to(\mathbb{P}^2,C'')$.
\end{example}

\begin{definition}[\cite{ea:jag}]
Two  reduced projective plane curves form a \emph{Zariski pair}\index{Zariski pair} if they have the same combinatorics but there is no homeomorphism
of $\mathbb{P}^2\to\mathbb{P}^2$ sending one to the another.
\end{definition}

\begin{remark}
Actually, in \cite{zr:29} Zariski did not show the existence of a Zariski pair, since he did not prove the existence of a curve $C''$. He found such sextics in \cite{zr:37} using deformation arguments (no explicit example) and he proved
that his examples have abelian fundamental group. Several decades later, explicit examples appeared in~\cite{Oka:92, ea:jag}.
\end{remark}

\begin{cnt}\label{cnt:cyclic_cover}
Given a curve $C=\{f(x,y,z)=0\}$ of degree~$d$, let 
\[
X_d:=\{[x:y:z:t]\in\mathbb{P}^3\mid t^d-f(x,y,z)=0\},
\begin{tikzcd}[row sep=0,/tikz/column 1/.append style={anchor=base east},/tikz/column 2/.append style={anchor=base west}]
X_d\rar["\tau"]&\mathbb{P}^2\\
{[x:y:z:t]}\rar[mapsto]&{[x:y:z]}
\end{tikzcd}
\]
be the $d$-cyclic cover of $\mathbb{P}^2$ ramified along~$C$. The monodromy of the cover\index{monodromy!of the cover} is generated by
\[
\begin{tikzcd}[row sep=0,/tikz/column 1/.append style={anchor=base east},/tikz/column 2/.append style={anchor=base west}]
X_d\rar["\eta"]&X_d\\
{[x:y:z:t]}\rar[mapsto]&{[x:y:z:\exp\frac{2i\pi}{d} t]}
\end{tikzcd}
\]
Let us recall that if $X'_d$ is any birational smooth model
of $X_d$, then $H^1(X_d;\mathbb{C})$ and $H^1(X'_d;\mathbb{C})$ are isomorphic; moreover,
this isomorphism commutes with the morphisms induced by $\eta$.
This construction is closed to the one of Alexander polynomial in knot theory.
\end{cnt}

\begin{definition}[Libgober \cite{lib:82}]\label{dfn:alexander}
The \emph{Alexander polynomial}\index{Alexander polynomial} of $C$ is the characteristic polynomial of the map induced by 
$\eta$ on $H^1$.
\end{definition}

In modern terms, Zariski proved in~\cite{zr:31} that the Alexander polynomials of $C',C''$ were different, $t^2-t+1$
and $1$, respectively. The Alexander polynomial can be obtained from the fundamental groups
of their complements (which are not isomorphic). The interesting point is that Zariski
computes the Alexander polynomial from some geometric properties of the curves and no fundamental group computation is needed.
The difference in the Alexander polynomials comes from the geometric property of the cusps being or not in a conic.

\begin{definition}
An \emph{Alexander-Zariski} pair\index{Zariski pair!Alexander-Zariski pair} is a Zariski pair where the Alexander polynomials of the curves are distinct.
\end{definition}

Zariski's example is an Alexander-Zariski pair. We know now a lot of Zariski pairs, not all of them are Alexander-Zariski,
sometimes there are geometric properties which characterize them but do not have an effect on the Alexander polynomial.
There are also the so-called \emph{arithmetic Zariski pairs}\index{Zariski pair!arithmetic} where the equations of the members of the pairs live in a number field and they are related by a Galois automorphism of the field, compare with the discussion in~\S\ref{sec:pb}. A more detailed description can be found in~\cite{act:08}, though many new features have appeared since then.

Let us summarize some of the dependence between combinatorics and topological properties.

\begin{ppts}
Let $C$ be a reduced projective planes curve.
\begin{enumerate}[label=\rm(\alph{enumi})]
\item $H_*(\mathbb{P}^2\setminus C;\mathbb{Z})$ depends only on the combinatorics.
\item $\pi_1(\mathbb{P}^2\setminus C)$ does not depend only on the combinatorics.
\item The Alexander polynomial is not a combinatorial invariant.
\end{enumerate}
\end{ppts}
 \section{Abstract topology\index{topology!abstract}}\label{sec:at}

Let us come back to surface singularities.
The main difference between SIS and $k$-LYS for $k>1$ is that the first blowing up does not solve
the singularities of the surface. Nevertheless, since the tangent cone is reduced, whenever it is smooth
the strict transform and the exceptional component are transversal. Let us elaborate these ideas.

Let $V$ as in Definition~\ref{dfn:lys}. The total transform $\sigma^* V$ of $V$ by $\sigma$ is a divisor
which decomposes as $\hat{V}+d E$. Recall that $E=\{0\}\times\mathbb{P}^2\equiv\mathbb{P}^2$ is a projective plane and the tangent cone $C_d$
is $E\cap\hat{V}$. Any irreducible compact divisor $D$ of a variety~$W$ has an \emph{Euler class}\index{Euler class} (or \emph{self-intersection}\index{self-intersection}) $e_W(D)$; this is true also when $W$ is not singular, and in this case $e_W(D)$ can be a rational number. It is a 
linear class of
divisors in $D$ corresponding to the line bundle $\mathscr{O}(D)$ restricted to $D$. 
Sometimes it is also denoted as $E\cdot E
=E^2=(E^2)_{W}$
since it is actually an intersection (the notation for the ambient variety will drop if no ambiguity is expected).

For the exceptional divisor
$E\subset\hat{\mathbb{C}}^3$ of $\sigma$ the Euler class can be detected by the following property. Let $H$ be any divisor in $\mathbb{C}^3$; then $E\cdot\sigma^*H=0$. 

If $H$ is a hyperplane through the origin we have that  $\sigma^* H=E+\hat{H}$, where $\hat{H}$ is the strict transform. It is clear
that 
$\hat{H}\cap E=L$ which is a line (namely $\mathbb{P}(H)$). Hence:
\[
0=E\cdot \sigma^* H =E^2 +E\cdot\hat{H}=E^2 + L\Longrightarrow e(E)=E^2=-L.
\]
The intersection properties of $C_d=\hat{V}\cap E$ in $E\equiv\mathbb{P}^2$ come from Bézout Theorem. 
We use the Euler class to compute the intersection properties of $C_d$ in~$V$.

\begin{proposition}\label{prop:abstract_inter}
Let $V$ be a $k$-LYS. Let $C_d:=C^1+\dots+C^r$ the decomposition of $C_d$ in irreducible divisors
of degrees $d_1,\dots,d_r$ with $d=d_1+\dots+d_r$. Then,
\[
(C_i\cdot C_j)_{\hat{V}}=\frac{d_i\cdot d_j}{k}\text{ if }i\neq j,\quad
(C^i\cdot C^i)_{\hat{V}} =-\frac{d_i\cdot(d-d_i+k)}{k}.
\]
\end{proposition}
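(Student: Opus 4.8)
The plan is to combine two ingredients: the self-intersection of the exceptional divisor $E$ recalled just above the statement, and the $k$-fold branched cover structure of $\hat V$ over $E$ that is already visible in the local equations of Remark~\ref{rem:local_eq}.

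First I would compute the mixed products $(C_d\cdot C^j)_{\hat V}$. Since $C^j\subset E$ and $\mathscr{O}_{\hat{\mathbb{C}}^3}(E)|_E=\mathscr{O}_{\mathbb{P}^2}(-1)$ (this is exactly the relation $E^2=-L$ recalled in the text), restricting the line bundle $\mathscr{O}(E)$ to $\hat V$ and then to $C^j$ gives $\mathscr{O}_{\mathbb{P}^2}(-1)|_{C^j}$, whence $(C_d\cdot C^j)_{\hat V}=\deg_{C^j}\mathscr{O}_{\mathbb{P}^2}(-1)=-d_j$. Writing $C_d=\sum_i C^i$ as a divisor on $\hat V$, this yields the relation
\[
(C^j\cdot C^j)_{\hat V}+\sum_{i\ne j}(C^i\cdot C^j)_{\hat V}=-d_j,
\]
so the self-intersection formula will follow automatically once the off-diagonal terms are known.

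For the off-diagonal terms I would use the projection $p\colon\hat V\to E\cong\mathbb{P}^2$ induced by $\hat{\mathbb{C}}^3\to\mathbb{P}^2$. In a neighbourhood of $E$ the local model $f_{d,P}(x,y)+z_1^k=0$ of Remark~\ref{rem:local_eq} shows that $p$ is finite of degree $k$ and totally ramified of order $k$ along $C_d$. Consequently, as a divisor on $\hat V$ each component satisfies $p^*C^i=k\,(C^i)_{\hat V}$, i.e. $(C^i)_{\hat V}=\tfrac1k\,p^*C^i$ as $\mathbb{Q}$-divisors. The projection formula $p^*\alpha\cdot p^*\beta=k\,(\alpha\cdot\beta)$ together with Bézout in $\mathbb{P}^2$ then gives, for $i\ne j$,
\[
(C^i\cdot C^j)_{\hat V}=\tfrac1{k^2}\,(p^*C^i\cdot p^*C^j)=\tfrac1{k^2}\cdot k\cdot d_id_j=\frac{d_id_j}{k}.
\]
Substituting into the relation above and using $\sum_{i\ne j}d_i=d-d_j$ produces $(C^j\cdot C^j)_{\hat V}=-d_j-\tfrac{d_j}{k}(d-d_j)=-\tfrac{d_j(d-d_j+k)}{k}$, as claimed.

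The main obstacle is that for $k>1$ the surface $\hat V$ is singular precisely at the singular points of $C_d$, which are exactly the points where distinct components meet; hence the intersection numbers are genuinely rational and one must work with $\mathbb{Q}$-Cartier divisors on a normal surface. The delicate step is therefore to justify the pullback identity $p^*C^i=k\,(C^i)_{\hat V}$ and the projection formula in this singular setting---equivalently, to verify the local factor $\tfrac1k$ by resolving the suspension singularity $f_{d,P}+z_1^k=0$ and applying Mumford's rational intersection numbers on the resolution (for an ordinary crossing one checks directly on the $A_{k-1}$ surface $\{xy+z_1^k=0\}$ that the two coordinate lines meet with multiplicity $\tfrac1k$). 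Once the intersection theory of $\mathbb{Q}$-Cartier divisors under the finite cover $p$ is in place, the computation above is purely formal.
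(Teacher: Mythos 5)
Your skeleton is the same as the paper's guideline proof: the relation $(C_d\cdot C^j)_{\hat V}=-d_j$ coming from $\mathscr{O}(E)|_E=\mathscr{O}_{\mathbb{P}^2}(-1)$, the off-diagonal numbers from a degree-$k$ covering argument plus Bézout, and the self-intersections by combining the two. Steps one and three are correct. The genuine gap is the global structure you assign to $p\colon\hat V\to E\cong\mathbb{P}^2$. The local model $f_{d,P}+z_1^k=0$ of Remark~\ref{rem:local_eq} requires replacing $z$ by $z$ times a unit, and that unit exists only where $f_{d+k}+zf_{d+k+1}+\cdots$ does not vanish, i.e.\ away from $C_{d+k}$; the $k$-LYS condition guarantees this at $\singArtal(C_d)$ but never along all of $C_d$, since by Bézout $C_d\cap C_{d+k}\neq\emptyset$. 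At those (smooth) points of $C_d$ the map $p$ is not a $k$-sheeted cover: for the normalized model $F=f_d+z^{d+k}$ of Remark~\ref{rem:accion}, $V$ contains the $d$ lines over $C_d\cap\{z=0\}$, so $p$ has one-dimensional fibers there, it is not surjective onto any neighbourhood of $C_d$, and $p^*C^i$ acquires vertical components, so $p^*C^i=k\,C^i$ fails. Worse, your stated hypotheses prove too much: nothing in them distinguishes $i\neq j$ from $i=j$, and the same projection formula with $i=j$ would give $(C^i\cdot C^i)_{\hat V}=d_i^2/k>0$, contradicting the value $-d_i(d-d_i+k)/k$ that you (correctly) derive from step one. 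So no such global finite degree-$k$ cover of a neighbourhood of $C_d$ in $\mathbb{P}^2$ can exist; this is exactly why the covering actually used in the paper (Remark~\ref{rem:accion}) has target the quotient $\hat V_k$, which is locally but \emph{not} globally isomorphic to a neighbourhood of $C_d$ in $\mathbb{P}^2$ (there $(C^i\cdot C^i)_{\hat V_k}=-d_i(d-d_i+k)$, not $d_i^2$).

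The repair is to localize, after which your computation goes through. For $i\neq j$ the number $(C^i\cdot C^j)_{\hat V}$ is a finite sum of local intersection multiplicities at the points of $C^i\cap C^j$, and these all lie in $\singArtal(C_d)$, exactly where the suspension model is valid and the germ of $p$ is an honest finite degree-$k$ cover $(\hat V,P)\to(\mathbb{P}^2,P)$ totally ramified along $C_d$. There your argument (local projection formula, the explicit $A_{k-1}$ check, or Mumford's pullback to a resolution) correctly yields $(C^i\cdot C^j)_{P,\hat V}=\frac{1}{k}\,(C^i\cdot C^j)_{P,\mathbb{P}^2}$, and Bézout supplies $\sum_P (C^i\cdot C^j)_{P,\mathbb{P}^2}=d_i d_j$, giving $d_id_j/k$. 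Self-intersections are global and cannot be obtained this way, which is why they must come, as in your last step and in the paper, from the relation $(C_d\cdot C^j)_{\hat V}=-d_j$. Note that this localized statement is also what the paper cites for $k=1$: Luengo's observation that the intersections of $C^i$ and $C^j$ are ``isomorphic in $E$ and $\hat V$'' is precisely the local claim at the points of $\singArtal(C_d)$, not a global covering statement.
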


\begin{proof}[Guidelines]
The ideas for the proof
are inspirated from \cite{Luengo87} (resp.~\cite{ACMNemethi})
for the SIS (resp.~$k$-LYS) case.
Since $\hat{V}$ is generically transversal to $E$, we recall that
then $(C^j\cdot C_d)_{\hat{V}}= (C^j\cdot e(E))_E= (C^j\cdot L)_E=-d_j$.

Let us start with the case $i\neq j$.
In the SIS case, Luengo~\cite{Luengo87} showed that the intersections of $C^i, C^j$, $i\neq j$ are
isomorphic in both $E$ and $\hat{V}$, and then $(C^i\cdot C^j)_{\hat{V}}= d_i\cdot d_j$.
In \cite{ACMNemethi}, using properties of intersections and coverings, we see that in general $(C^i\cdot C^j)_{\hat{V}}= \frac{d_i\cdot d_j}{k}$.
Combining with the first formula, we obtain the statement.
\end{proof}

By the very definition of SIS, $\hat{\sigma}:\hat{V}\to V$ is a resolution\index{resolution}
of $V$. Note that from Proposition~\ref{prop:abstract_inter}, there is no $(-1)$-curve, hence,
$\hat{\sigma}$ is the minimal resolution of $V$. A minimal log resolution $\tilde\sigma:\tilde{V}\to V$
is obtained combining $\hat{\sigma}$ with the minimal embedded resolution of $\singArtal'(C_d)\subset\hat{V}$. The divisor 
$\tilde\sigma^*(0)$ is a simple normal crossing divisor. 
A regular neighbourhood of $\tilde\sigma^*(0)$ in $\tilde{V}$ can be constructed 
as in Proposition~\ref{prop:local_topology} and its neighbourhood is also a graph
manifold associated to the dual graph $\tilde{\Gamma}$; if $\Gamma$ is 
the graph obtained in that proposition, note that they are equal if we forget the decorations.
More precisely, the decorations of the non-marked vertices coincide for both graphs. If $e_i$
(resp.~$\tilde{e}_i$) is the Euler number of the vertex in $\Gamma$ (resp.~$\tilde{\Gamma}$)
corresponding to $C_i$, then
\[
e_i-\tilde{e}_i=d_i^2 - (-d_i(d-d_i+1)) = d_i(d+1),
\]
since we have to perform the same blow-ups in order to get simple normal crossing divisors in both cases.

\begin{proposition}[\cite{ACMNemethi}]
If $V$ is a $k$-LYS ($k>1$), a log resolution $\tilde{\sigma}:\tilde{V}\to V$ is obtained from $\hat{\sigma}$ by performing 
the embedded resolution of the function $f_{d,P}:(\hat{V},P)\to\mathbb{C}$, $\forall P\in\singArtal(C_d)$.

In particular, for $k\geq 1$, the topology of the link\index{link} of $V$ depends only on $k$ and on the combinatorics of $C_d$. 
\end{proposition}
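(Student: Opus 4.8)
The plan is to establish the two assertions in turn, the first feeding the second. For the log resolution I would begin by locating the points where $\hat V$ is singular or where $\hat\sigma^{-1}(0)$ fails to be a normal crossing divisor. By Remark~\ref{rem:local_eq}, when $k>1$ the strict transform $\hat V$ is smooth and meets $E$ transversally at every smooth point of $C_d$ by item~(Loc1), so there $\hat\sigma^{-1}(0)=C_d$ is already a smooth divisor; the only bad points are the $P\in\singArtal(C_d)$, at which the germ $(\hat V,P)$ is the $k$-suspension $z_1^k+f_{d,P}(x,y)=0$ by item~(Loc3). On this germ the reduced exceptional locus of $\hat\sigma$ equals $\{z_1=0\}\cap\hat V=\{f_{d,P}=0\}\cap\hat V$, the two coinciding because $z_1^k=-f_{d,P}$ on $\hat V$. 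Consequently an embedded resolution of the function $f_{d,P}$ on $(\hat V,P)$ at once resolves the surface singularity and turns the total transform of this exceptional locus into a normal crossing divisor. Carrying this out at every $P\in\singArtal(C_d)$ and composing with $\hat\sigma$ yields $\tilde\sigma\colon\tilde V\to V$ with $\tilde\sigma^{-1}(0)$ a simple normal crossing divisor, that is, a log resolution.

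For the second assertion I would pass to the plumbing picture. The link of $V$ is the boundary of a regular neighbourhood of $\tilde\sigma^{-1}(0)$ in $\tilde V$, and, exactly as in the proof of Proposition~\ref{prop:local_topology}, this boundary is the plumbing manifold associated with the decorated dual graph $\tilde\Gamma$ of $\tilde\sigma^{-1}(0)$, its vertices weighted by genus and self-intersection and its edges recording the intersection points; by Waldhausen's theory and Neumann's plumbing calculus the homeomorphism type depends only on $\tilde\Gamma$. It therefore suffices to show that $\tilde\Gamma$ is determined by $k$ and the combinatorics of $C_d$. Its vertices are the strict transforms $\hat C^i$ together with the exceptional curves produced by the local resolutions; the genera of the $\hat C^i$ are combinatorial by the lemma recovering genera from the combinatorics, while the exceptional curves are rational.

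The decorations and edges are then obtained by combining global and local data. Globally, Proposition~\ref{prop:abstract_inter} gives $(C^i\cdot C^i)_{\hat V}=-d_i(d-d_i+k)/k$, which involves only $d_i$, $d$ and $k$. Locally, by~\ref{loc4} the topological type of the pair $(\hat V,C_d)$ at each $P$ is determined by the topological type of $(C_d,P)$ and by $k$; since the minimal embedded resolution graph of a function germ on a surface germ is a topological invariant of the pair, the entire local contribution to $\tilde\Gamma$ --- the number, genera, self-intersections and mutual incidences of the exceptional curves, their incidences with the $\hat C^i$, and the drop in self-intersection suffered by each $\hat C^i$ through $P$ --- is itself determined by $(C_d,P)$ and $k$. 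As the local types $(C_d,P)$ and the degrees $d_i$ are encoded in the combinatorics of $C_d$, gluing these local graphs along the $\hat C^i$ and correcting $(C^i\cdot C^i)_{\hat V}$ by the local drops produces a $\tilde\Gamma$ depending only on $k$ and the combinatorics. For $k=1$ this is the classical superisolated statement, where only $\singArtal'(C_d)$ must be resolved, so the conclusion holds for all $k\geq 1$.

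The step I expect to be the real work is turning~\ref{loc4} into quantitative resolution data: one must check that the embedded resolution of the suspension $z_1^k+f_{d,P}=0$ paired with the divisor $\{f_{d,P}=0\}$ yields a weighted graph depending only on the embedded topological type of the plane curve germ $(C_d,P)$ and on $k$. Concretely this is a Hirzebruch--Jung analysis of the $k$-fold cyclic cover branched along the resolved branches of $(C_d,P)$, translating the Puiseux and multiplicity data together with $k$ into the self-intersections and valencies of the new curves. Here the genericity hypothesis $\singArtal(C_d)\cap C_{d+k}=\emptyset$ is essential: it forces the local equation of $\hat V$ to be the pure suspension $z_1^k+f_{d,P}$ with no interfering higher-order terms, so that the local model, and hence $\tilde\Gamma$, is genuinely a function of $(C_d,P)$ and $k$ alone.
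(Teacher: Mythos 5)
Your proposal is correct and follows essentially the same route as the paper's proof: identify the singular points of $\hat{V}$ with $\singArtal(C_d)$ via the local suspension model $z_1^k+f_{d,P}=0$ of Remark~\ref{rem:local_eq}, perform the embedded resolution of the function $f_{d,P}$ at each such point, and conclude that the resulting decorated dual graph---hence the plumbing description of the link---is determined by $k$ and the combinatorics of~$C_d$. The paper's proof is terser (it leaves the plumbing argument and the determination of the decorations implicit in the preceding discussion and in Remark~\ref{rem:abs_res_lys}), and your added details---the identification of the reduced exceptional locus with $\{f_{d,P}=0\}\cap\hat{V}$, and the correction of the $\mathbb{Q}$-self-intersections of Proposition~\ref{prop:abstract_inter} by the local resolution data---fill in exactly the steps the paper omits.
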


\begin{proof}
The discussion previous to the statement serves for $k=1$. Let us consider now the case $k>1$.
Recall that $\singArtal(C_d)$ and $\singArtal(\hat{V})$ are the same sets. 

Let $P\in\singArtal(C_d)$, and let us assume that we are in the situation of Remark~\ref{rem:local_eq} where the local equation of $\hat{V}$ at $P=(0,0,0)$ is $z_1^k-f_{d,p}(x, y ,1)=0$. Let us consider a (minimal) embedded resolution of the function $f_{d,P}$ in this singularity; its dual graph
is completely determined by $k$ and the topological type of $C_d$ at $P$. 

The union of the exceptional
components for all $P\in\singArtal(C_d)$ with the strict transform of $f$ is a simple normal crossing divisor
and we obtain a log resolution of $\hat{V}$. Moreover, the decorated dual graph is completely determined by the combinatorics of~$C_d$.
\end{proof}

\begin{remark}\label{rem:abs_res_lys}
There is a simple way to produce a log-resolution of the $k$-suspension\index{suspension} of $f(x,y)=0$. 
Note that the link of the singularity is the cyclic cover of $\mathbb{S}^3$ ramified along the link of $f(x,y)=0$.
There is a general procedure by C.~Safont~\cite{Safont:90} to obtain the plumbing graph of a cover of 
a graph manifold, which can be extended to the case where it is ramified along fibers of the blocks.
When $k=2,3$, this procedure can be simplified. When $k>3$, one needs to use the theory of quotient singularities, see~\S\ref{sec:quot}.
The normalization~$Z$ of the pull-back of the suspension and an embedded resolution of $f=0$ is a singular surface. The singular points
are the preimages of the double points of the divisor of the embedded resolution, and they are the normalizations of singularities
with equations $z^{k}-u^a v^b=0$ which are disjoint unions of quotient singularities, see Example~\ref{ejm:hnk}. How to deal with the intersections numbers in
the singular space~$Z$
is done in~\cite{AMO:2014a}, and for its resolution in Proposition~\ref{prop:min_res}.
\end{remark}

\begin{remark}\label{rem:accion}
There is a global version of the previous remark. Since the topology of a $k$-LYS does not depend on the particular $f_m$, $m\leq d+k$,
as far as $f_{d+k}$ satisfies the condition in Definition~\ref{dfn:lys}, we may assume that $F=f_d(x,y,z)+z^{d+k}$ where $f_d(x,y,0)$
does not have multiple factors. Under this hypothesis $(x,y,z)\mapsto\zeta\cdot(x,y,z)$ preserves $V$ if $\zeta^k=1$. 
This action lifts to $\hat{V}$, the quotient $\hat{V}_k$ is smooth, the preimage of $C_d$ is isomorphic to $C_d$ and if we keep the notation
for the irreducible components of $C_d$ we have
\[
(C_i\cdot C_j)_{\hat{V}_k}=d_i\cdot d_j\text{ if }i\neq j,\quad
(C^i\cdot C^i)_{\hat{V}_k} =-d_i\cdot(d-d_i+k).
\]
Since $\hat{V}$ is a $k$-cyclic cover of $\hat{V}_k$ ramified along $C_d$, a log-resolution of $V$ can be effectively obtained
from an embedded resolution of $C_d\subset\hat{V}_k$.
\end{remark}

As we stated in the introduction, singularities  whose link
is a $\mathbb{Q}$-homology sphere is an important family. SIS and $k$-LYS  with this property can be characterized.

\begin{proposition}\label{prop:lys_qhs}
Let $V$ be a $k$-LYS with tangent cone\index{tangent cone} $C_d$. If $k=1$, the link\index{link} of $V$ is
a $\mathbb{Q}$-homology sphere\index{Q homology sphere@$\mathbb{Q}$-homology sphere} if and only if all these irreducible components are
cuspidal rational curves and if $C_d$ is reducible, then all the components
intersect at only one point.

If $k>1$, besides the above conditions, $\forall P\in\singArtal(C_d)$
the link of its $k$-suspension is also a $\mathbb{Q}$-homology sphere.
\end{proposition}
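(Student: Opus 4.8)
The plan is to realize the link $M$ of $V$ as the boundary of a plumbing and to reduce the $\mathbb{Q}$-homology sphere condition to two purely combinatorial conditions on the resolution graph, which I then translate back into the geometry of $C_d$. By the previous proposition a log resolution $\tilde\sigma\colon\tilde V\to V$ exists whose exceptional divisor is simple normal crossing with dual graph $\tilde\Gamma$, and $M$ is the plumbed $3$-manifold associated with $\tilde\Gamma$. Since $V$ is a normal isolated surface singularity, the intersection matrix of $\tilde\Gamma$ is negative definite (Grauert--Mumford), in particular non-degenerate, so Neumann's plumbing calculus \cite{neu:81} yields
\[
b_1(M)=b_1(\tilde\Gamma)+2\sum_{v}g_v,
\]
where $b_1(\tilde\Gamma)$ is the cycle rank of the underlying graph and $g_v$ the genus decorating the vertex $v$. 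Hence $M$ is a $\mathbb{Q}$-homology sphere if and only if $\tilde\Gamma$ is a tree and all vertex genera vanish, and the whole proof consists in reading these two requirements off $C_d$.

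First I would treat $k=1$. The non-marked vertices of $\tilde\Gamma$ arise from the embedded resolution of the singular points of $C_d$ inside the \emph{smooth} surface $\hat V$, hence are rational, while the marked vertices are the strict transforms $\hat C^i$ whose genus equals the geometric genus of $C^i$ (as in the genus lemma). Thus the vanishing of all $g_v$ is equivalent to every $C^i$ being rational. For the tree condition I would isolate two sources of cycles. A singular point of a single $C^i$ carrying more than one local branch forces $\hat C^i$ to meet its local exceptional tree at more than one point, creating a loop, whereas a unibranch (cuspidal) point resolves into a chain attached to $\hat C^i$ along a single vertex; so ruling out these loops is exactly the condition that every $C^i$ be cuspidal. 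The remaining cycles are global, coming from two components meeting at two or more points, or from a cyclic chain of pairwise intersections; all components passing through a single common point and meeting only there instead produces a star-shaped, hence acyclic, global skeleton. For $r\ge 2$ this is precisely the stated ``intersect at only one point'' condition, and for $r=1$ it is vacuous, which settles $k=1$.

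Next I would handle $k>1$ by keeping the same global analysis but accounting for the fact that $\hat V$ is now singular exactly along $\singArtal(C_d)$, where by Remark~\ref{rem:local_eq} its local equation is the $k$-suspension $z_1^k-f_{d,P}(x,y)=0$. The graph $\tilde\Gamma$ is assembled by inserting at each such $P$ the local resolution graph of this suspension. The configuration of the $\hat C^i$ contributes a tree with rational marked vertices under the same conditions as before (rational, cuspidal, meeting at a single point), while the local piece at $P$ contributes a tree with rational vertices if and only if the resolution graph of the suspension is itself a tree with rational vertices, that is, if and only if the link of the $k$-suspension at $P$ is a $\mathbb{Q}$-homology sphere. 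Intersecting the global and local conditions gives the stated characterization.

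I expect the main obstacle to be the careful justification of the tree condition: proving both that multibranch singularities and multiple or cyclic inter-component intersections \emph{necessarily} create independent cycles in $\tilde\Gamma$, and that the stated conditions \emph{suffice} to exclude all of them. In the $k>1$ case the delicate point is to decouple cleanly the global skeleton of the $\hat C^i$ from the inserted suspension graphs and to verify that each $\hat C^i$ attaches to a suspension piece along a single vertex, so that no new cycle is introduced at the interface; this is what guarantees that tree-ness of $\tilde\Gamma$ factors exactly into the global geometric conditions plus the local suspension condition.
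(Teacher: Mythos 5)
Your proposal is correct and takes essentially the same route as the paper: the paper's entire proof consists of the single observation that the link is a $\mathbb{Q}$-homology sphere if and only if all irreducible components of the resolution divisor are rational and the dual graph is a tree, which is exactly the plumbing criterion you start from and then translate into the geometry of $C_d$. The rest of your argument is the combinatorial translation that the paper leaves to the reader, and it is carried out correctly; the only point worth making explicit in the necessity direction is that B\'ezout's theorem (any two projective plane curves meet) is what rules out acyclic but non-star-shaped incidence patterns among the components, forcing them all to pass through one common point.
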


\begin{proof}
Recall that the condition on the link is equivalent to ask that all the irreducible components
are rational and the dual graph is a tree.
\end{proof}

 \section{On the monodromy}\label{sec:mon}

One of the main invariants in singularity theory is the monodromy\index{monodromy}. 
Let $F:(\mathbb{C}^{n+1},0)\to(\mathbb{C},0)$ be the defining equation of a germ
of isolated singularity of a hypersurface.
Following Milnor~\cite{Milnor:68}
we know that there exists $\varepsilon>0$ such that $F$ is defined in
a neighbourhood of $\overline{\mathbb{B}}^{2n+2}_{\varepsilon}$,
the origin is its unique singular point, and
$V:=F^{-1}(0)$ intersects transversally $\mathbb{S}^{2n+1}_{\varepsilon_0}$, $\forall\varepsilon_0\in(0,\varepsilon]$.
By arguments of compactness, there exists $\eta>0$ such that $F^{-1}(t)\pitchfork\mathbb{S}^{2n+1}_{\varepsilon}$
for all $t\in\overline{\mathbb{D}}^{2}_\eta$.

The Milnor fibration\index{Milnor!fibration} has as representative the map $F^*_{\varepsilon,\eta}$
\begin{equation}
\label{eq:fibration}
\begin{tikzcd}
\overline{\mathbb{B}}^*_{\varepsilon,\eta}:=
\overline{\mathbb{B}}^{2n+2}_{\varepsilon}\cap F^{-1}(\overline{\mathbb{D}}^{2}_\eta\setminus\{0\})
\ar[rr,"F^*_{\varepsilon,\eta}:=F_{|}"]\dar[hookrightarrow]&&
\overline{\mathbb{D}}^{2}_\eta\setminus\{0\}\dar[hookrightarrow]\\
\overline{\mathbb{B}}_{\varepsilon,\eta}:=
\overline{\mathbb{B}}^{2n+2}_{\varepsilon}\cap F^{-1}(\overline{\mathbb{D}}^{2}_\eta)
\ar[rr,"F_{\varepsilon,\eta}:=F_{|}"]&&
\overline{\mathbb{D}}^{2}_\eta
\end{tikzcd}
\end{equation}
and its topological type does not depend on the particular choices. Its homotopy type is the same if
we replace $\overline{\mathbb{D}}^{2}_\eta\setminus\{0\}$ by $\mathbb{S}^1_\eta$
and we know that fibrations on the circle are completely determined by the so-called \emph{geometric
monodromy}\index{monodromy!geometric}. If
\[
\Sigma:=F^{-1}(\eta)\cap\overline{\mathbb{B}}^{2n+2}_{\varepsilon}\quad
\text{(the Milnor fiber\index{Milnor!fiber})}
\]
then the monodromy $\Phi:\Sigma\to\Sigma$ is a homeomorphism, only well-defined up to isotopy (actually it can be chosen a diffeomorphism), such that this fibration
\[
\begin{tikzcd}
{[0,1]}\times\Sigma/(1,z)\sim(0,\Phi(z))\rar&{[0,1]}/0\sim 1\equiv\mathbb{S}^1.
\end{tikzcd}
\]
is isomorphic to $F^*_{\varepsilon,\eta}$.

Moreover Milnor showed that it has the homotopy type of a wedge of $\mu$ $n$-spheres, where $\mu$ (the Milnor number\index{Milnor!number})
is actually the codimension of the Jacobian ideal of~$F$.
Then, $H_n(\Sigma;\mathbb{Z})$ is the only non trivial reduced homology group of~$\Sigma$ and
$\Phi_*:H_n(\Sigma;\mathbb{Z})\to H_n(\Sigma;\mathbb{Z})$ is a well-defined automorphism called the \emph{homological monodromy}\index{monodromy!homological}.
In the same way we can define its dual $\Phi^*:H^n(\Sigma;\mathbb{Z})\to H^n(\Sigma;\mathbb{Z})$, the \emph{cohomological monodromy}\index{monodromy!cohomological}.
Most of the time we will write only \emph{monodromy} and the context will determine the one we refer to.

Though $\Phi_*$ loses a lot of information from $\Phi$, it provides more effective invariants. The first one
is its characteristic polynomial $\Delta$. Resolutions, see~\S\ref{sec:res} and~\S\ref{sec:qres}, provide tools
to compute a representative of $\Phi$. In particular, $\Delta$ can be retrieved from the famous A'Campo's formula\index{A'Campo's formula}~\cite{Acampo:75}.
This polynomial can be computed for $k$-LYS.

\begin{proposition}\label{prop:char}
Let $V$ be a $k$-LYS singularity with tangent cone $C_d$. For $k=1$
\begin{equation}\label{eq:char-sis}
\Delta = \frac{(t^d -1)^{d^2 - 3 d + 3 - \mu(C_d)}}{t-1}
\prod_{p\in\singArtal(C_d)}\Delta_p(t^{d+1}),
\end{equation}
where $\Delta_p$ is the characteristic polynomial of the monodromy\index{monodromy!characteristic polynomial} $\Phi_p$ of $(C_d,p)$.

If $k>1$, then
\begin{equation}\label{eq:char-lys}
\Delta = \frac{(t^d -1)^{d^2 - 3 d + 3 - \mu(C_d)}}{t-1}
\prod_{P\in\singArtal(C_d)}\Delta_P^{(k)}(t^{d+k}),
\end{equation}
where $\Delta_P^{(k)}$ is the characteristic polynomial of $\Phi_P^k$.
In particular,
$\mu =(d - 1)^3 + k\mu(C_d)$.
\end{proposition}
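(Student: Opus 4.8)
The plan is to compute $\Delta$ via A'Campo's formula applied to the embedded resolution of $V$ described in the previous sections, exploiting the fact that the resolution is assembled from two types of pieces: the exceptional plane $E\cong\mathbb{P}^2$ blown up along $\singArtal'(C_d)$, and the local embedded resolutions of the $k$-suspension germs $z_1^k - f_{d,P}(x,y,1)=0$ at each $P\in\singArtal(C_d)$. A'Campo's formula expresses $\Delta$ as a product over the components of the exceptional divisor, where each component $D$ contributes $(t^{m_D}-1)^{-\chi(D^\circ)}$, with $m_D$ the multiplicity of $F\circ(\text{resolution})$ along $D$ and $\chi(D^\circ)$ the Euler characteristic of the complement in $D$ of the other exceptional components.

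First I would isolate the contribution of the main component $E$. Its multiplicity in the total transform $\sigma^*V$ is $d$, so it contributes a factor $(t^d-1)^{-\chi(E^\circ)}$. The open part $E^\circ$ is $\mathbb{P}^2$ minus the tangent cone $C_d$ together with the blown-up singular locus; its Euler characteristic I would compute as $\chi(\mathbb{P}^2)-\chi(C_d)$ adjusted by the local data at the singular points. Since $\chi(\mathbb{P}^2\setminus C_d)$ depends only on the degree $d$ and the local Milnor numbers (via $\chi$ of a plane curve and the standard genus/degree formulas), this yields the global exponent $d^2-3d+3-\mu(C_d)$, and the division by $(t-1)$ accounts for the reduced versus unreduced count and the fact that A'Campo's formula computes the zeta function of the full (not reduced) homology.

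Next I would handle each singular point $P$. Here the key observation, already recorded in Remark~\ref{rem:local_eq} and the preceding propositions, is that near $P$ the surface $\hat V$ is the $k$-suspension $z_1^k=f_{d,P}(x,y,1)$, and its local resolution data are governed entirely by $k$ and the topological type of $(C_d,P)$. The monodromy of a $k$-suspension is the $k$-th Thom--Sebastiani-type twist of $\Phi_P$, so one expects the local factor to be $\Delta_P^{(k)}$ evaluated after the reparametrization $t\mapsto t^{d+k}$ — the shift $d+k$ coming from the multiplicities along $E$ (namely $d$) plus the order $k$ of the suspension variable, which is exactly the multiplicity structure of $F$ in the chart \eqref{eq:chart-blowup}. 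For $k=1$ the suspension is smooth and $\Phi_P^{1}=\Phi_P$, giving \eqref{eq:char-sis} with the exponent $d+1$; for $k>1$ one gets \eqref{eq:char-lys} with $d+k$ and $\Phi_P^k$.

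Finally, the Milnor number formula $\mu=(d-1)^3+k\,\mu(C_d)$ I would extract either from $\Delta$ directly — since $\mu+1$ (or $\mu$, depending on parity of $n$) is read off from $\deg\Delta$ via the characteristic polynomial of an isolated surface singularity — or more cleanly from the interpretation in Remark~\ref{rem:accion}: writing $F=f_d(x,y,z)+z^{d+k}$ and computing the codimension of the Jacobian ideal, the term $(d-1)^3$ is the Milnor number of the homogeneous cone singularity and $k\,\mu(C_d)$ is the correction contributed by the suspensions, one factor of $\mu(C_d)$ per unit of $k$. The main obstacle I anticipate is the bookkeeping of multiplicities $m_D$ along the local exceptional divisors and justifying that the suspension monodromy contributes precisely $\Delta_P^{(k)}(t^{d+k})$; this requires carefully matching A'Campo's multiplicity weights with the $k$-th power of the local monodromy and verifying that no cross terms between distinct singular points appear — which is legitimate since the singular points are disjoint and each local resolution is glued transversally along $E$.
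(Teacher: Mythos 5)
Your overall strategy --- embedded resolution plus A'Campo's formula --- is exactly the route the paper attributes to \cite{ea:mams} for $k=1$ (the paper itself gives no proof: it only cites \cite{Stevens:89}, \cite{ea:mams}, \cite{GLM:97}, \cite{jmm:14}), and two of your ingredients are sound: the global exponent does come from $\chi(\mathbb{P}^2\setminus C_d)=d^2-3d+3-\mu(C_d)$, and $\mu=(d-1)^3+k\mu(C_d)$ does follow by reading off $\deg\Delta$ from the formula. The genuine gap is at the crux, namely the identification of the local factor $\Delta_P^{(k)}(t^{d+k})$, and the heuristic you offer for it is incorrect. Thom--Sebastiani applied to the suspension $z^k+f_{d,P}$ gives a monodromy with eigenvalues $\lambda\zeta$, where $\lambda$ runs over the eigenvalues of $\Phi_P$ and $\zeta$ over the nontrivial $k$-th roots of unity --- a polynomial of degree $(k-1)\mu_P(C_d)$ --- whereas $\Delta_P^{(k)}$ has the eigenvalues $\lambda^k$ and degree $\mu_P(C_d)$; these are different objects (for a node and $k=2$ the first gives eigenvalue $-1$, the second eigenvalue $1$), so ``the $k$-th Thom--Sebastiani-type twist'' cannot be what produces the factor. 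What must actually be computed at $P$ is the monodromy zeta function of the \emph{non-isolated} germ $z^d\bigl(f_{d,P}(x,y)+z^k u\bigr)$ --- the total transform, in which the exceptional plane enters with multiplicity $d$ --- and proving that this equals $\det\bigl(\mathbf{1}-t^{d+k}\Phi_P^k\bigr)$ on $H^1$ of the local Milnor fiber is precisely the hard local computation of \cite{ea:mams} and \cite{GLM:97}: the exponent $d+k$ and the power $\Phi_P^k$ emerge from the multiplicities and the covering/gcd combinatorics of the divisors over $P$, not from a suspension-monodromy formalism. Note also that resolving the suspension germ itself is not the issue: for $k=1$ it is smooth, yet nontrivial blow-ups over $P$ (including along curves) are still forced because $\hat{V}$ is tangent to $E$.

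For $k>1$ there is a second obstacle your plan does not address: the full simple-normal-crossing embedded resolution you assume can be ``assembled'' from local data at the points of $\singArtal(C_d)$ is exactly what the author reports as having resisted construction (see \S\ref{sec:qres}, where the attempt to extend the techniques of \cite{ea:mams} to $k$-LYS ``encountered technical difficulties''). The published proofs of \eqref{eq:char-lys} circumvent this: \cite{GLM:97} uses only the \emph{partial} resolution given by the single blow-up $\sigma$, together with an A'Campo-type formula valid for partial resolutions, in which the stratum $\mathbb{P}^2\setminus C_d$ contributes $(1-t^d)^{\chi}$ and each point $P$ contributes the zeta function of the germ above; \cite{jmm:14} instead replaces SNC resolutions by embedded $\mathbb{Q}$-resolutions built from weighted blow-ups of points. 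Either of these would be the realistic way to complete your argument; as written, both the existence of your resolution for $k>1$ and the identification of the local factor remain unproved, and the latter cannot be repaired by the Thom--Sebastiani argument you propose.
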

Formula~\eqref{eq:char-sis} was stated without proof in~\cite{Stevens:89}; it was restated and proved in~\cite{ea:mams},
using an embedded resolution and A'Campo's formula. Formula~\eqref{eq:char-lys} was proved in~\cite{GLM:97}, using
an A'Campo-like formula for partial resolutions, and reproved in~\cite{jmm:14}.

\begin{corollary}
The characteristic polynomial of the monodromy of a $k$-LYS depends only on $k$ and on the combinatorics
of its tangent cone.
\end{corollary}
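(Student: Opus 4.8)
The strategy is to take Proposition~\ref{prop:char} as given and verify that every ingredient appearing in the explicit formulas~\eqref{eq:char-sis} and~\eqref{eq:char-lys} is already encoded in the pair $(k,\Gamma_{C_d})$, where $\Gamma_{C_d}$ denotes the combinatorics of the tangent cone. Since $k$ is part of the data, I only need to control the geometric quantities $d$, $\mu(C_d)$, and the local characteristic polynomials $\Delta_p$ (for $k=1$) or $\Delta_P^{(k)}$ (for $k>1$). The whole argument is therefore a bookkeeping of which piece of combinatorial information feeds into each factor.

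First I would recover the degree. The degrees $d_i$ of the irreducible components, hence $d=d_1+\dots+d_r$, are combinatorial: this is exactly the computation carried out in the proof of the genus lemma, where from the self-intersection of a marked vertex together with the contractions prescribed by the non-marked connected components one reconstructs $C_i^2=d_i^2$. Next, the global Milnor number $\mu(C_d)=\sum_{p\in\singArtal(C_d)}\mu_p(C_d)$ splits as a sum of local Milnor numbers. Each $\mu_p$ is a topological invariant of the germ $(C_d,p)$, and by Proposition~\ref{prop:local_topology} two curves with the same combinatorics have locally homeomorphic neighbourhood pairs, so all local invariants of the singular points coincide. Hence both $d$ and $\mu(C_d)$ depend only on $\Gamma_{C_d}$.

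It remains to treat the local monodromy factors. For an isolated plane curve singularity the characteristic polynomial $\Delta_p$ of the monodromy is a topological invariant of the germ (it is computed, for instance, from an embedded resolution via A'Campo's formula, or from the Puiseux data, both of which are recorded in the local subgraph over $p$); thus each $\Delta_p$, and the product over $p$, depends only on the combinatorics. For $k>1$ the same reasoning gives $\Delta_P$, and it then suffices to observe that $\Delta_P^{(k)}$, the characteristic polynomial of $\Phi_P^k$, is determined by $\Delta_P$ and $k$ alone: the eigenvalues of $\Phi_P^k$ are the $k$-th powers, counted with multiplicity, of the eigenvalues of $\Phi_P$, so $\Delta_P^{(k)}$ is a function of $\Delta_P$ and $k$. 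Substituting all of this into~\eqref{eq:char-sis} and~\eqref{eq:char-lys} yields an expression in which every symbol is a function of $(k,\Gamma_{C_d})$, which proves the claim.

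The step I expect to be the genuine content, rather than bookkeeping, is the assertion that the local topological type of each $(C_d,p)$ — and therefore $\Delta_p$ — is faithfully encoded in $\Gamma_{C_d}$; this is precisely where Proposition~\ref{prop:local_topology} does the work, and one should take care that the marking of vertices (needed there because of Cremona transformations) plays no role in the local statement, only the decorated connected components over the singular points being relevant.
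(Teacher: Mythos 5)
Your proposal is correct and follows exactly the route the paper intends: the paper states this corollary without proof, as an immediate consequence of Proposition~\ref{prop:char}, and your argument simply supplies the bookkeeping that $d$, $\mu(C_d)$, and the local factors $\Delta_p$ (resp.\ $\Delta_P^{(k)}$, determined by $\Delta_P$ and $k$ via $k$-th powers of eigenvalues) are all encoded in the combinatorics of the tangent cone. The only caveat is your closing remark: the marking \emph{is} used locally, namely to delimit the non-marked clusters and identify their outgoing edges as branches of the curve, though you are right that nothing beyond that (in particular, which global component each branch belongs to) is needed.
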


This corollary brings to our attention a conjecture of S.S.-T.~Yau cited in the Introduction.

\begin{cjt}[\cite{yau:89}]\label{cjt:yau}
Let $(V_i,0)\subset(\mathbb{C}^3,0)$, $i=1,2$, two germs of isolated singularity of surface. Let $K_1,K_2$ be their links
and let $\Delta_1,\Delta_2$ be the characteristic polynomials of their monodromies. If $K_1$ is homeomorphic to $K_2$
(they have the \emph{same abstract topology\index{topology!abstract}}) and $\Delta_1=\Delta_2$, then they have the \emph{same embedded topology\index{topology!embedded}}, i.e.,
there is a germ $\Psi:(\mathbb{C}^3,0)\to(\mathbb{C}^3,0)$ of homeomorphism such that $\Psi(V_1)=V_2$.
\end{cjt}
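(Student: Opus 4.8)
The plan is to prove the conjecture inside the class of $k$-LYS singularities, where both hypotheses admit a combinatorial translation, and then to transfer the resulting control on the tangent cones back to the embedded germ. First I would take two $k$-LYS germs $(V_1,0),(V_2,0)$ with tangent cones $C_d^{(1)},C_d^{(2)}$ and invoke the two structural results already in place: by the description of the link in \S\ref{sec:at} the abstract topology of $K_i$ is read off the decorated dual graph $\tilde\Gamma_i$, hence, after forgetting markings, from the combinatorics of $C_d^{(i)}$; and by Proposition~\ref{prop:char} (and the corollary that follows it) the polynomial $\Delta_i$ is likewise governed by $k$ and that same combinatorics. The first genuine step is to run this correspondence backwards. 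From $K_1\cong K_2$, via the graph-manifold rigidity (Waldhausen plus Neumann's calculus) already used in the proof of Proposition~\ref{prop:local_topology}, I would recover the unmarked graph $\tilde\Gamma$; and from $\Delta_1=\Delta_2$ together with the formulas~\eqref{eq:char-sis} and~\eqref{eq:char-lys} I would try to pin down the remaining numerical data, so as to conclude that $C_d^{(1)}$ and $C_d^{(2)}$ carry the same combinatorics.

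The second step is the geometric transfer. Up to the blow-up $\sigma$ and the resolution described in Remark~\ref{rem:local_eq}, a $k$-LYS germ is encoded by the pair $(\mathbb{P}^2,C_d)$ sitting inside $E=\{0\}\times\mathbb{P}^2$, since the total transform $\hat V+dE$ is reconstructed from $C_d\subset E$ together with the integer $k$, and the embedded germ $(\mathbb{C}^3,V)$ is in turn recovered by blowing down. Consequently a homeomorphism of pairs $h:(\mathbb{P}^2,C_d^{(1)})\to(\mathbb{P}^2,C_d^{(2)})$ ought to cone off to a germ homeomorphism $\Psi:(\mathbb{C}^3,V_1)\to(\mathbb{C}^3,V_2)$, matching strict transforms with strict transforms and exceptional divisors with exceptional divisors and hence descending through $\sigma$. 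The conjecture would then follow for $k$-LYS germs the moment one knew that equal combinatorics produces such an $h$.

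The hard part, and precisely the place where I expect the attempt to break and where the counterexamples advertised in the introduction live, is exactly this last implication. Proposition~\ref{prop:local_topology} only yields a homeomorphism of \emph{regular neighbourhoods} $(N(C_d^{(1)}),C_d^{(1)})\to(N(C_d^{(2)}),C_d^{(2)})$, that is, equality of the \emph{local} embedded topology; it says nothing about a homeomorphism of the ambient $\mathbb{P}^2$. Two curves with identical combinatorics that are carried one to the other by no homeomorphism of $\mathbb{P}^2$ are by definition a Zariski pair, and such pairs exist and are frequently already separated by the Alexander polynomial of Definition~\ref{dfn:alexander}. Feeding a Zariski pair into the construction of the second step manufactures two $k$-LYS germs with homeomorphic links and equal characteristic polynomials whose embedded topologies \emph{cannot} coincide, because any germ homeomorphism $\Psi$ would restrict on $E$ to the forbidden homeomorphism of $(\mathbb{P}^2,C_d)$. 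I therefore expect the argument to succeed only after one imposes a hypothesis excluding Zariski pairs, for instance restricting to combinatorial types realized by a single curve up to ambient homeomorphism, and to fail in general; isolating the sharpest such hypothesis, and testing it against exactly how much embedded information $\Delta$ really records, is the crux of the matter.
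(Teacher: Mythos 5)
You have correctly diagnosed the situation: the statement is a conjecture that the paper does not prove but \emph{refutes} (Theorem~\ref{thm:counter_yau}), the refutation lives inside the class of SIS/$k$-LYS, and Zariski pairs of tangent cones are the source of the counterexamples. Your first two steps also correctly isolate the one implication the paper actually uses, namely that tangent cones with the same combinatorics give $k$-LYS satisfying both hypotheses of Conjecture~\ref{cjt:yau} (same link, same $\Delta$); your proposed converse (recovering the combinatorics from $K$ and $\Delta$) is neither needed nor established in the paper, but nothing rests on it.

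The genuine gap is in the mechanism you give for why the embedded topologies must then differ. You assert that any germ homeomorphism $\Psi:(\mathbb{C}^3,V_1)\to(\mathbb{C}^3,V_2)$ ``would restrict on $E$ to the forbidden homeomorphism of $(\mathbb{P}^2,C_d)$''. This is unjustified: blowing up is an analytic operation, and a topological homeomorphism of germs need not lift to the blow-ups nor carry exceptional divisor to exceptional divisor. If this claim were true, \emph{every} Zariski pair (Alexander type or not) would yield $k$-LYS with distinct embedded topology --- but that is exactly the open Problem stated in \S\ref{sec:pb}, which the paper explicitly cannot settle; the discussion there explains that one would need a uniqueness statement for the decomposition of the Milnor fibration coming from the semistable normalization (an analogue of the Nielsen--Thurston decomposition, known only for $n=1$). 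The paper's actual argument bypasses the ambient restriction entirely by invoking an invariant that genuinely belongs to the embedded topology: the Milnor fibration, hence the conjugacy class of the homological monodromy and in particular its Jordan form. By the theorem at the end of \S\ref{sec:qres}, for a $k$-LYS one has $\Delta^{[1]}(t)=\Delta^{\cmbArtal,k}(t)/\Delta_{C_d}(t)$ with $\Delta^{\cmbArtal,k}$ purely combinatorial, so the sizes and eigenvalues of the Jordan blocks of size~$2$ detect the Alexander polynomial of the tangent cone. This is why the paper must (and you should) restrict to \emph{Alexander}-Zariski pairs, whose existence is provided by~\cite{ea:jag}: equal combinatorics gives equal links and equal characteristic polynomials, while distinct Alexander polynomials force distinct Jordan forms and therefore distinct embedded topologies. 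With that substitution --- Jordan form of the monodromy in place of your restriction-to-$E$ claim, and Alexander-Zariski pairs in place of arbitrary Zariski pairs --- your outline becomes the paper's proof.
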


The author supported this conjecture since it holds for large families of examples. The above discussion
shows that if we have two $k$-LYS whose tangent cones have the same combinatorics, then they satisfy the hypotheses
of the conjecture. The characteristic polynomial of the monodromy is an invariant of the conjugacy class of the action
of the monodromy on cohomology. In order to study the conjecture, we want study finer invariants of the monodromy,
namely the Jordan form.

Note also that another consequence of Proposition~\ref{prop:char} is that all the eigenvalues are roots of unity.
Actually a cornerstone in singularity theory gives a sharper statement, the Monodromy Theorem. Let us introduce
some notation in order to give a shorter statement. We need some concepts from~\S\ref{sec:peso}, namely
the weight filtrations of nilpotent endomorphisms.

\begin{definition}
Let $V$ a $\mathbb{Q}$-vector space of finite dimension and let $h:V\to V$ be an automorphism
such that its characteristic polynomial is a product of cyclotomic polynomials. Let $m\in\mathbb{Z}_{>0}$
such that the characteristic polynomial of $h^m$ is a power of $t-1$. Let $W$ be the weight filtration
of the nilpotent endomorphism $\mathbf{1}-h^m$, centered at ~$0$. For $k\geq 0$, the \emph{$k$-characteristic polynomial of $h$}
is the characteristic polynomial $\Delta^{[k]}$ of the isomorphism induced by $h$ on $\grArtal^W_k$ (which coincides
with the one on $\grArtal^W_{-k}$).
\end{definition}

The reader may skip in a first approach\S\ref{sec:peso}. What it is relevant, from Proposition~\ref{prop:weight}\ref{prop:weight_eigen}, 
is that the degree of $\Delta^{[k]}$ is the number of Jordan blocks\index{Jordan blocks} of size~$(k+1)$, and the polynomial itself gives
their eigenvalues.

\begin{theorem}[\cite{br:66}, {\cite[pp. 10--13]{GrDel:72}}]\label{thm:mon}
The eigenvalues of $\Phi$ are roots of unity; moreover, $\Delta^{[m]}=1$, if $m>n$, and $1$ is not a root of~$\Delta^{[n]}$, i.e., the maximal size for Jordan blocks is $n+1$
(and $n$ for the eigenvalue~$1$).
\end{theorem}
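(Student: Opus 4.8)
The statement combines a quasi-unipotence assertion (all eigenvalues of $\Phi$ are roots of unity) with a bound on the sizes of its Jordan blocks, the latter phrased through the polynomials $\Delta^{[k]}$. My plan is to establish these two features by separate mechanisms and then to translate the output into the language of $\Delta^{[k]}$ by pure linear algebra: by Proposition~\ref{prop:weight} the degree of $\Delta^{[k]}$ equals the number of Jordan blocks of size $k+1$, so, writing $N:=\mathbf{1}-\Phi^{m}$ with $m$ chosen so that $\Phi^{m}$ is unipotent, the assertion ``$\Delta^{[k]}=1$ for $k>n$'' means exactly $N^{n+1}=0$, while ``$1$ is not a root of $\Delta^{[n]}$'' means $N^{n}=0$ on the generalized eigenspace of eigenvalue~$1$.

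For quasi-unipotence I would fix an embedded resolution $\pi\colon(X,D)\to(\mathbb{C}^{n+1},0)$ of $V$ with $D:=(F\circ\pi)^{-1}(0)$ a normal crossing divisor and multiplicities $F\circ\pi=\sum_i m_iE_i$, exactly the kind of model discussed in~\S\ref{sec:res}. Along the open part of a multiple intersection $E_{i_1}\cap\dots\cap E_{i_r}$ the local monodromy is that of the normal crossing germ $\prod_j t_j^{m_{i_j}}$, whose eigenvalues are products of $m_{i_j}$-th roots of unity; patching these contributions (via A'Campo's formula~\cite{Acampo:75}, already invoked for Proposition~\ref{prop:char}, or via the nearby-cycle weight spectral sequence) shows that every eigenvalue of $\Phi$ is an $N_0$-th root of unity with $N_0:=\operatorname{lcm}_i m_i$. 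A conceptual alternative is the arithmetic route of Grothendieck--Deligne~\cite{GrDel:72}: spread the situation over a finitely generated ring, reduce modulo a prime of good reduction, and observe that the eigenvalues of the $\ell$-adic local monodromy are algebraic integers all of whose archimedean conjugates have modulus~$1$, hence roots of unity by Kronecker.

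The substantial part is the Jordan block bound, and here I would use the limit mixed Hodge structure on the vanishing cohomology $\widetilde{H}^n(\Sigma;\mathbb{Q})$, built from the same resolution through the nearby-cycle (weight) spectral sequence of~\S\ref{sec:res} and~\S\ref{sec:peso}. Writing $M$ for its weight filtration and decomposing $\widetilde{H}^n(\Sigma)$ along the eigenvalues of the semisimple part of $\Phi$, the decisive input---which I expect to be the main obstacle---is that this limit mixed Hodge structure is \emph{polarizable}: by Schmid's $\mathrm{SL}_2$-orbit theorem (equivalently Steenbrink's direct computation) $M$ is the monodromy weight filtration of $N$, centred at $n$ on the part with eigenvalue $\neq 1$ and at $n+1$ on the eigenvalue-$1$ part, and it obeys the hard Lefschetz isomorphisms $N^{k}\colon\grArtal^M_{c+k}\xrightarrow{\ \sim\ }\grArtal^M_{c-k}$. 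Since $\widetilde{H}^n(\Sigma)$ carries weights only in the interval $[0,2n]$, symmetry of $M$ about its centre yields the two nilpotency bounds simultaneously: on the eigenvalue-$\neq 1$ part (centre $n$) the admissible weights force $N^{n+1}=0$, while on the eigenvalue-$1$ part (centre $n+1$) the same range confines $M$ to $[2,2n]$ and forces $N^{n}=0$. By Proposition~\ref{prop:weight} the first bound is precisely $\Delta^{[k]}=1$ for $k>n$, and the second says that $1$ is not a root of $\Delta^{[n]}$. Once polarizability and the weight range $[0,2n]$ are secured, the remaining points---degeneration of the weight spectral sequence and the precise values of the two centrings---are bookkeeping.
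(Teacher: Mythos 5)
The paper does not actually prove Theorem~\ref{thm:mon}: it is quoted as a classical result, with the argument deferred to Brieskorn~\cite{br:66} and Grothendieck--Deligne~\cite[pp.~10--13]{GrDel:72}. So your proposal can only be measured against those sources and against the machinery the paper develops afterwards, and on that basis it is sound. Your reduction of the statement to nilpotency bounds is correct: a Jordan block of $\Phi$ of size $s$ with eigenvalue $\lambda$, $\lambda^m=1$, yields a single size-$s$ block for the unipotent $\Phi^m$, so the maximal block size of $\Phi$ is the nilpotency order of $N=\mathbf{1}-\Phi^m$, and the eigenvalue-$1$ refinement is the same computation on the generalized $1$-eigenspace. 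Your quasi-unipotence step also works: for an isolated singularity the reduced cohomology of $\Sigma$ sits in degree $n$, so A'Campo's formula \eqref{eq:zeta} exhibits $\det\left(\mathbf{1}-t\Phi^*\right)^{\pm 1}$ as $(1-t)^{-1}\prod_j(1-t^{N_j})^{\chi(\mathring{E}_j)}$, forcing every eigenvalue to be a root of unity; Grothendieck's arithmetic argument is the standard alternative. For the substantial step you correctly identify the two inputs: Steenbrink's theorem (quoted in \S\ref{sec:res}) that the weight filtration is the monodromy weight filtration of $N_\Phi$, centered at $n$ on $H^n_{\neq 1}$ and at $n+1$ on $H^n_{=1}$, and the fact that the weights on $H^n(\Sigma;\mathbb{Q})$ lie in $[0,2n]$ (visible in \eqref{eq:sss2}, where $\mathcal{E}^2_{p,n-p}$ vanishes unless $0\leq p\leq 2n$). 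Symmetry of the monodromy weight filtration about its center then gives $N^{n+1}=0$ globally and $N^{n}=0$ on the eigenvalue-$1$ part, exactly as you compute.

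As for the comparison: your route is genuinely different from the cited ones. Brieskorn obtains quasi-unipotence from the regularity of the Gauss--Manin connection, and Grothendieck from the $\ell$-adic/arithmetic argument you mention as an alternative; neither uses Hodge theory. Your proof is essentially Steenbrink's later Hodge-theoretic one, and it is precisely the route that the paper's own \S\ref{sec:res} and \S\ref{sec:peso} support --- the theorem on the vertical filtration and Proposition~\ref{prop:weight} supply, verbatim, the centering and block-counting facts you invoke. One structural merit of your ordering deserves note: the paper, as written, uses Theorem~\ref{thm:mon} to know that $N_\Phi=\Phi^N-\mathbf{1}$ is nilpotent before stating Steenbrink's theorem, whereas you establish quasi-unipotence first by independent means (resolution or arithmetic) and only then bring in the limit mixed Hodge structure, so your argument is free of circularity. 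What it costs is that the deep inputs (degeneration of the spectral sequence, polarizability, the identification of the weight filtration with the monodromy weight filtration) are taken on faith from Schmid--Steenbrink, but since the paper itself quotes these without proof, that is an acceptable foundation for the sketch.
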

 \section{Embedded resolutions}\label{sec:res}

In order to prove \eqref{eq:char-sis} an embedded resolution of a SIS is needed, and this is the main result of~\cite{ea:mams}.
Since an embedded resolution provides a model for the geometric monodromy, much more information can be retrieved.
We recall the general definition.

Let $(V,0)\subset(\mathbb{C}^{n+1},0)$ be a germ of isolated hypersurface singularity defined by $F\in\mathbb{C}\{x_0,x_1,\dots,x_n\}$.

\begin{definition}
An \emph{embedded resolution}\index{resolution!embedded} of $V$ is a proper birational map $\pi:(X,E)\to(\mathbb{C}^{n+1},0)$, $X$ smooth, which is an isomorphism outside the origin and
such that $\pi^*V$ is a simple normal crossing divisor.
\end{definition}

Under these conditions, $E=\pi^{-1}(0)$ is also a simple normal crossing divisor with irreducible components $E_1,\dots,E_r$. The strict
transform of $V$ is $\hat{V}:=\overline{\pi^{-1}(V\setminus\{0\})}$ and the total transform is the simple normal crossing divisor
\begin{equation}\label{eq:total}
\pi^*V = \hat{V} + \sum_{j=1}^r N_j E_j.
\end{equation}
Let us recall A'Campo's formula\index{A'Campo's formula}. For the sake of simplicity let us denote $E_0:=\hat{V}$. We set
for $J\subset\{0,1,\dots,r\}$:
\[
E_J=\bigcap_{j\in J} E_j,\qquad
\mathring{E}_J:=E_J\setminus\bigcup_{j\notin J} E_j.
\]
Let $h$ be a homeomorphism of an $n$-dimensional manifold. We define the zeta function of $h$ as
\[
\zeta_h(t):=\prod_{j=0}^n \det\left(\mathbf{1}-t h^{*,H^j(X;\mathbb{C})}\right)^{(-1)^j},
\]
i.e., the alternating product of the characteristic polynomials of $h$ acting on cohomology. When $V$ is an isolated singularity
of hypersurface, $\zeta_V$ denotes the zeta function of the monodromy.
\begin{theorem}[A'Campo \cite{acampo:03}]
Let  $V$ is an isolated singularity
of hypersurface. With the above notation,
\begin{equation}\label{eq:zeta}
\zeta_V(t)=\prod_{j=1}^r (1-t^{N_j})^{\chi(\mathring{E}_j)},
\end{equation}
where $E_j:=E_{\{j\}}$.
\end{theorem}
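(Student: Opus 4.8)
The plan is to reconstruct $\zeta_V$ from the Lefschetz numbers of the iterated geometric monodromy and to evaluate those numbers on the resolution. Write
\[
\Lambda(\Phi^m):=\sum_{j\ge 0}(-1)^j\operatorname{tr}\bigl((\Phi^m)^{*}\mid H^j(\Sigma;\mathbb{C})\bigr)
\]
for the Lefschetz number of the $m$-th iterate of the geometric monodromy $\Phi\colon\Sigma\to\Sigma$. Taking logarithms in the definition of $\zeta_V=\zeta_\Phi$ and using $\log\det(\mathbf 1-tA)=-\sum_{m\ge1}\operatorname{tr}(A^m)\,t^m/m$ gives the formal identity
\[
\log\zeta_V(t)=-\sum_{m\ge1}\frac{t^m}{m}\,\Lambda(\Phi^m),
\]
and the same manipulation applied to the right-hand side of \eqref{eq:zeta} turns the claimed formula into the arithmetic identity
\[
\Lambda(\Phi^m)=\sum_{\substack{1\le j\le r\\ N_j\mid m}} N_j\,\chi(\mathring{E}_j),\qquad m\ge 1.
\]
So it suffices to compute each $\Lambda(\Phi^m)$.

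To do this I would transport the Milnor fibration to $X$ through the (proper, birational, isomorphism off $0$) map $\pi$, so that $\Sigma\cong\pi^{-1}(\Sigma)$ sits in a tubular neighbourhood of $\pi^{-1}(0)=\bigcup_{j\ge1}E_j$. Put $g:=F\circ\pi$; by \eqref{eq:total} the divisor of $g$ is $\hat V+\sum_j N_j E_j$, so in local coordinates adapted to the simple normal crossing divisor $g$ is a unit times a monomial $\prod_i x_i^{N_i}$. Following A'Campo, I would build a geometric monodromy by integrating a vector field that rotates the argument of $g$ (and is the identity near $\partial\Sigma$), compatible with the stratification of $\pi^{-1}(0)$ by the open strata $\mathring{E}_J$. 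Over the interior $\mathring{E}_j$ of a single component the local model is $g=u\,x^{N_j}$, the nearby fibre is the $N_j$-fold cyclic (unramified) cover $\tilde{E}_j\to\mathring{E}_j$ coming from the $N_j$ roots of $x^{N_j}=\eta/u$, and $\Phi$ restricts to the order-$N_j$ deck transformation; over a deeper stratum $\mathring{E}_J$ with $\abs{J}\ge2$ the local model $g=u\prod_{i\in J}x_i^{N_i}$ forces the fibre to carry a free circle (indeed torus) action in the angular directions $\theta_i$.

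Finally I would compute $\Lambda(\Phi^m)$ by the Lefschetz fixed point theorem in additive form. Since $\Phi$ preserves the partition of $\Sigma$ into the loci lying over the strata $\mathring{E}_J$, the number $\Lambda(\Phi^m)$ is the sum of the contributions of these loci, each being the (compactly supported, equivalently ordinary for these complex strata) Euler characteristic of the locus when $\Phi^m$ is the identity there and $0$ when $\Phi^m$ has no fixed point there. Over $\mathring{E}_j$ ($j\ge1$) the locus is the $N_j$-fold cover $\tilde{E}_j$, fixed pointwise exactly when $N_j\mid m$ and permuted freely otherwise, contributing $\chi(\tilde{E}_j)=N_j\,\chi(\mathring{E}_j)$ or $0$; over $\mathring{E}_J$ with $\abs{J}\ge2$ the free circle action leaves no fixed point, contributing $0$; and over the strict transform $E_0=\hat V$ the locus is isomorphic to $V\setminus\{0\}$, which retracts onto the odd-dimensional link and hence has Euler characteristic $0$. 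Adding the surviving terms reproduces the displayed value of $\Lambda(\Phi^m)$ and therefore \eqref{eq:zeta}. The hard part will be the middle step: producing a single geometric monodromy whose iterates realize this clean stratified fixed-point pattern — in particular proving that the deep strata $\mathring{E}_J$, $\abs{J}\ge2$, carry a genuinely free circle action and so contribute nothing — which is the technical core of A'Campo's argument. A route that bypasses the explicit diffeomorphism is to use that the monodromy zeta function is multiplicative over the stratification of $\pi^{-1}(0)$ and to compute the local zeta function of the monomial $\prod_i x_i^{N_i}$ on each stratum; the two local models $x^{N_j}$ and $\prod_{i\in J}x_i^{N_i}$ ($\abs{J}\ge2$) then give directly the factors $(1-t^{N_j})^{\chi(\mathring{E}_j)}$ and $1$.
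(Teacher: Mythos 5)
The paper itself contains no proof of this theorem: it is stated as a citation of A'Campo (the formula is from \cite{Acampo:75}, referred to earlier in \S\ref{sec:mon} as ``A'Campo's formula''), so there is no in-paper argument to compare yours against; the comparison has to be with A'Campo's original proof, and your proposal is a faithful reconstruction of exactly that proof. Your formal reduction is correct and consistent with the paper's sign convention $\zeta_h(t)=\prod_j\det\bigl(\mathbf{1}-t\,h^{*,H^j}\bigr)^{(-1)^j}$: taking logarithms turns \eqref{eq:zeta} into the Lefschetz-number identity
\[
\Lambda(\Phi^m)=\sum_{\substack{1\le j\le r\\ N_j\mid m}} N_j\,\chi(\mathring{E}_j),
\]
and your stratified bookkeeping of the contributions is the right one: the $N_j$-fold unramified cover over $\mathring{E}_j$, the vanishing over the deep strata $\mathring{E}_J$, $\# J\ge 2$, and the vanishing over $\mathring{E}_0=\hat{V}\setminus E$ because that piece has the homotopy type of the link, a closed odd-dimensional manifold of zero Euler characteristic (this last point is precisely why the strict transform is absent from the product).

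The one substantive caveat is the one you flag yourself: the existence of a geometric monodromy adapted to the stratification of $\pi^{-1}(V)$, whose iterates are fixed-point free (or have fixed loci fibered in circles) over every stratum $\mathring{E}_J$ with $\# J\ge 2$ and which acts as the deck transformation over each $\mathring{E}_j$, is not a routine verification --- it \emph{is} A'Campo's theorem; everything surrounding it is formal. The same is true of your proposed shortcut via multiplicativity of the monodromy zeta function over the strata: that ``localization principle'' (additivity of Lefschetz numbers over a partition into invariant locally closed pieces, using compactly supported Euler characteristics and the fact that $\chi_c=\chi$ for complex constructible sets) is a genuine theorem that must be invoked or proved, not a manipulation. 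So your proposal correctly identifies the route and reduces the statement to the right lemma, but as written it is a proof modulo that lemma rather than a complete proof.
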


In this formula, only the \emph{smooth part} of $\pi^{-1}(V)$ matters, i.e. only 
subsets $J$ with one element, so apparently we have introduced useless notation. 
It is not the case, since, starting from an embedded resolution
J.~Steenbrink constructs in~\cite{Steenbrink:76, Steenbrink:77} (using all the above spaces) a spectral sequence\index{spectral sequence}~$\mathcal{E}$
(see Construction~\ref{cnt:ss} and the discussion after Theorem~\ref{thm:steenbrink} for details)
with the following properties:

\begin{enumerate}
\item The level $\mathcal{E}_1$ is composed by the cohomology of some cyclic covers of the spaces $E_J$, $\# J\leq 1$.
\item It converges to the cohomology of the Milnor fiber.
\item It degenerates at the level $\mathcal{E}_2$.
\item It provides a Mixed Hodge Structure to the cohomology of the Milnor fiber.
\end{enumerate}

Mixed Hodge Structures\index{Mixed Hodge structure}
are related to two filtrations, Hodge\index{filtration!Hodge} (analytic) and weight\index{filtration!weight} (topological), and we are mainly interested in this second one, since it 
contains the information about the Jordan form of the cohomology of the fiber.
We are going to explain how to construct the spectral sequence~$\mathcal{E}$, how to obtain the weight filtration and how
to relate it to the monodromy, and in particular to its Jordan form.

In order to construct the spectral sequence we need to start with the semistable normalization,
see~\cite{Steenbrink:77}.
Let $N$ be any common multiple
of all $N_i$'s and let us consider the following diagram
\[
\begin{tikzcd}
Z\rar["\nu"]&Y\rar["g"]\dar&X\dar["F\circ\pi"]\\
&\mathbb{C}\rar["t\mapsto t^N" below]&\mathbb{C}
\end{tikzcd}
\]
where $Y$ is the pull-back and $\nu$ the normalization. While $X$ is smooth, it is not the case for neither $Y$ nor $Z$, but $Z$
is normal and its singularities are of abelian quotient type.
It is not hard to see that $g$ is an isomorphism from $g^{-1}(E)$ to $E$, but~$\nu$
is a (piecewise) cyclic cover $(g\circ\nu)^{-1}(E)\to E$. 

Let $D:=(g\circ\nu)^{-1}(E)$,
and let us denote $D_j:=(g\circ\nu)^{-1}(E_j)$; note that it may be reducible and not connected.
We define $D_J$ and $\mathring{D}_J$ in the same way.

\begin{proposition}
The map $\nu_J:=\nu_{|D_J}:D_J\to E_J$ is a cyclic ramified cover, unramified along $\mathring{E}_J$,
with $N_J:=\gcd(N_j,\ j\in J)$ sheets, and the ramifications indices are defined by the other $N_j$'s.
\end{proposition}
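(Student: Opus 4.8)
The assertion is local over $E_J$, and the plan is to reduce it to the normalization of an explicit toric germ. First I would localize at a generic point $p\in\mathring{E}_J$. Since $\pi^*V$ is a simple normal crossing divisor, there are local analytic coordinates $(u_j)_{j\in J}$ together with free coordinates $v=(v_1,\dots,v_{n+1-\#J})$ in which $E_j=\{u_j=0\}$ for $j\in J$, no other component passes through $p$, and $F\circ\pi=w\prod_{j\in J}u_j^{N_j}$ with $w$ a nonvanishing unit. Absorbing $w$ into the cover coordinate (replace $t$ by $t\,w^{-1/N}$, which is analytic since $w$ is a unit), the pull-back $Y$ is locally $\{t^N=\prod_{j\in J}u_j^{N_j}\}$, times the free $v$-factor which plays no role. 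Because $g$ restricts to an isomorphism $g^{-1}(E)\xrightarrow{\sim}E$, I may regard $\nu_J$ as the map $D_J=(g\nu)^{-1}(E_J)\to E_J$, so that everything is governed by the transverse germ $\{t^N=\prod_{j\in J}u_j^{N_j}\}$ and its normalization.

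Next I would factor this germ. Writing $N_J=\gcd(N_j:j\in J)$ and $N_j=N_J\alpha_j$ with $\gcd(\alpha_j:j\in J)=1$, and using $N_J\mid N$, one has
\[
t^N-\prod_{j\in J}u_j^{N_j}=\prod_{\zeta^{N_J}=1}\Bigl(t^{N/N_J}-\zeta\prod_{j\in J}u_j^{\alpha_j}\Bigr),
\]
which exhibits exactly $N_J$ branches $Y_\zeta$, each irreducible because $\gcd(\alpha_j:j\in J)=1$. Each $Y_\zeta$ is an affine toric variety whose torus is the transverse $(\mathbb{C}^*)^{\#J}$ and whose only $0$-dimensional orbit is the point $u=t=0$ lying over $E_J$; hence its normalization $\tilde Y_\zeta$, a normal affine toric variety with a unique torus-fixed point, has a single point in the fibre over $p$. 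As the defining equation involves only the $u_j$, $j\in J$, the construction is a product along the $v$-directions, so over $\mathring{E}_J$ the map $\nu_J$ is unramified with exactly $N_J$ sheets, one from each branch.

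To see that $\nu_J$ is cyclic I would use the deck transformation $\eta\colon t\mapsto e^{2\pi i/N}t$. It extends to the normalization $Z$, preserves each $D_J$, and acts trivially on $E_J$; moreover it sends $Y_\zeta$ to $Y_{\omega\zeta}$ with $\omega=e^{2\pi i/N_J}$ a primitive $N_J$-th root of unity, so $\mu_N$ permutes the $N_J$ sheets in a single cycle. Thus $\mu_N$ acts transitively on the fibre of $\nu_J$ with stabilizer $\mu_{N/N_J}$, and $\nu_J$ is a cyclic Galois cover of degree $N_J$. Finally, to identify the ramification I would rerun the local computation at a generic point of a deeper stratum $\mathring{E}_{J'}$ with $J\subsetneq J'$: the transverse model there acquires the extra factors $u_j^{N_j}$, $j\in J'\setminus J$, the sheet number drops to $N_{J'}=\gcd(N_j:j\in J')\mid N_J$, and matching how the $N_J$ sheets of $D_J$ coalesce over $E_{J'}$ yields ramification index $N_J/N_{J'}$, a number determined by the remaining indices $N_j$, $j\in J'\setminus J$, that is, by the other $N_j$'s.

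The main obstacle is precisely the toric normalization step: proving that the normalization of each branch $Y_\zeta$ meets $E_J$ in a single point, i.e.\ that the coordinate monomials $u_j$ cut out exactly the torus-fixed point of $\tilde Y_\zeta$. This is a saturation computation for the semigroup generated by the exponent vectors of the $u_j$ and $t$, and it is here that the hypotheses $N_j\mid N$ and $\gcd(\alpha_j:j\in J)=1$ are genuinely used; everything else is bookkeeping with these local models.
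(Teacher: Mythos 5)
Your proof is correct and follows essentially the same route as the paper's: localize at a generic point of the stratum, reduce to the local model $t^N=\prod_{j\in J}u_j^{N_j}$, count its $N_J=\gcd(N_j,\ j\in J)$ local branches, get the cyclic structure from the deck transformation $t\mapsto e^{2\pi i/N}t$, and read off the ramification along the deeper strata $E_{J'}$, $J\subsetneqq J'$. The paper compresses the branch count and the one-point-per-branch behaviour of normalization into the phrase ``separate the local irreducible components''; your explicit factorization and toric saturation argument simply supply those details.
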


\begin{proof}
Let us pick-up a point $P\in\mathring{D}_J$. We can choose local coordinates $x_1,\dots,x_{n+1}$ centered at~$P$
such that if $J=\{j_1,\dots,j_s\}$, then the local equation of $D_{j_i}$ is $x_i=0$. Let us denote 
$P_Y\in Y$ the only point in $g^{-1}(P)$. The local equation of $Y$ on $P_Y$ is 
$t^N-\prod_{i=1}^{s}x_i^{N_i}=0$. The normalization process has two parts:
separate the local irreducible components and \emph{add} holomorphic forms. Only the first one affects
to the topology of $D_J$. Since $N$ is a multiple of all the $N_j$'s, $Y$ has $N_J$ irreducible components
and there is an obvious cyclic monodromy. The ramifications are produced along $D_{J'}$, with $J\subsetneqq J'$,
and the number of points depend on $N_j$ for $j\in J'$.
\end{proof}

\begin{cnt}\label{cnt:ss}
Steenbrink spectral sequence\index{spectral sequence!Steenbrink} is perfectly explained in the original paper~\cite{Steenbrink:77}. Let us recall what we need
about spectral sequences and weight filtrations. 
Let us consider a complex of vector spaces $\{(C_n,d_n)\}_{n\in\mathbb{Z}}$, $d_n:C_n\to C_{n+1}$, and let us consider 
a decreasing filtration $L_k(C_n)$ such that $d_n(L_k(C_n))\subset L_k(C_{n+1})$. 

This filtration induces a graduation and we can define
\[
\mathcal{C}^0_{p,q}:=\frac{L_q(C_{p+q})}{L_{q+1}(C_{p+q})}=\grArtal_q^L C_{p+q}
\text{ (the }0\text{-level of the spectral sequence)},
\]
which is also a complex with differentials $d_{p,q}^0:E^0_{p,q}\to E^0_{p+q,q}$. 

Let us define for $r\geq 1$:
\begin{align*}
Z^r_{p,q}:=&\frac{d_{p+q}^{-1}(L_{q+r}(C_{p+q+1}))+L_{q+1}(C_{p+q})}{L_{q+1}(C_{p+q})},\\ 
B^r_{p,q}:=&\frac{d_{p+q-1}(L_{q-r+1}(C_{p+q-1}))+L_{q+1}(C_{p+q})}{L_{q+1}(C_{p+q})},\\
\mathcal{E}^r_{p,q}:=&\frac{Z^r_{p,q}}{B^r_{p,q}},\text{ the }r\text{-level of the spectral sequence.}
\end{align*}
The actual cocycles are the elements in $\ker d$; the elements of $Z^r_{p,q}$ are represented by
the elements in $L_q(C_{p+q})$
such that their images by $d_{p+q}$ live in $L_{q+r}(C_{p+q+1})$, i.e.,
the image by the differential \emph{approaches} to $0$.
Something similar can be said for the coboundaries and the elements in $B^r_{p,q}$: they are represented
by the images by $d_{p+q-1}$ of elements in $L_{q-r+1}(C_{p+q-1})$ and their images \emph{approach} to the total set of boundaries.

There is a natural map $d^r_{p,q}:\mathcal{E}^r_{p,q}\to\mathcal{E}^r_{p-r+1,q+r}$ induced by the original differential.
By its very definition, $\mathcal{E}^{r+1}_{p,q}$ is exactly the cohomology defined by the differentials $d^r$,
i.e., the ${r+1}$-level is the cohomology of the $r$-level.
We can also define, 
\begin{align*}
Z^\infty_{p,q}:=&\frac{\ker d_{p+q}+L_{q+1}(C_{p+q})}{L_{q+1}(C_{p+q})},\\ 
B^\infty_{p,q}:=&\frac{d_{p+q-1}(C_{p+q-1})+L_{q+1}(C_{p+q})}{L_{q+1}(C_{p+q})},\\
\mathcal{E}^\infty_{p,q}:=&\frac{Z^\infty_{p,q}}{B^\infty_{p,q}},\text{ the }\infty\text{-level of the spectral sequence.}
\end{align*}
It is important to notice that $\mathcal{E}^\infty_{p,q}$ is the associated graded of $H^{p+q}(C^\bullet)$ by the filtration
induced by $L$. Hence, we can interpret a spectral sequence as a cohomological path starting from the 
cohomology of the induced graded complex and ending in the associated graded of the cohomology of the complex; we
say that the spectral sequence converges to the cohomology of the complex $C$.
Note that if after some $r$-level all the differentials vanish then $\mathcal{E}^r=\mathcal{E}^{r+1}=\dots=\mathcal{E}^\infty$
and we say that the spectral sequence \emph{degenerates} at $\mathcal{E}^r$.
\end{cnt}

\begin{theorem}[\cite{Steenbrink:77}]\label{thm:steenbrink}
There is a spectral sequence $\mathcal{E}$ converging to $H^*(\Sigma;\mathbb{Q})$, degenerating at $\mathcal{E}^2$.
\end{theorem}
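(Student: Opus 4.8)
The plan is to realize $H^*(\Sigma;\mathbb{Q})$ as the cohomology of the complex of nearby cycles of $F\circ\pi$ and to produce $\mathcal{E}$ as the spectral sequence attached to a natural (weight) filtration $L$ on a complex computing them, in the sense of Construction~\ref{cnt:ss}. This is exactly why the semistable normalization $Z\to\mathbb{C}$ was introduced: the generic fiber of $g\circ\nu$ is homotopy equivalent to $\Sigma$, while the special fiber $D=(g\circ\nu)^{-1}(E)$ is a reduced normal crossing divisor up to the abelian quotient singularities of $Z$, which are invisible to rational cohomology. Over such a semistable degeneration the nearby cycle complex admits a combinatorial resolution built from the strata $D_J$ and their iterated intersections, and its weight filtration is precisely the $L$ whose spectral sequence will be $\mathcal{E}$.

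First I would fix a bounded complex $C^\bullet$ computing $R\psi\,\mathbb{Q}$ (equivalently the cohomology of the canonical fiber), equipped with its weight filtration $L$, and form the spectral sequence of Construction~\ref{cnt:ss}. By construction the level $\mathcal{E}^1$ is assembled from the cohomology groups $H^*(D_J;\mathbb{Q})$ of the strata, equivalently from the cohomology of the cyclic covers $\nu_J\colon D_J\to E_J$ described above, suitably Tate-twisted; this recovers the first-page description recalled before the theorem. The differential $d^1$ is then the alternating sum of the Gysin and restriction maps relating adjacent strata.

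Next I would establish convergence. Because $Z$ is normal with only quotient singularities, with $\mathbb{Q}$-coefficients its local cohomology along $D$ reproduces the vanishing-cycle behaviour of the original family, so the comparison theorem identifies the hypercohomology of the nearby cycle complex with $H^*(\Sigma;\mathbb{Q})$. Since $L$ is a finite filtration on a bounded complex, the spectral sequence converges to the associated graded of $H^{p+q}(\Sigma;\mathbb{Q})$.

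The hard part is the degeneration at $\mathcal{E}^2$. The crucial point is that the construction is not merely a filtered complex but a \emph{cohomological mixed Hodge complex} in Deligne's sense: each stratum $D_J$ is (rationally) smooth projective, despite the quotient singularities of $Z$, so $H^*(D_J;\mathbb{Q})$ carries a pure Hodge structure, and these pieces assemble through $L$ into a mixed Hodge structure on $H^*(\Sigma;\mathbb{Q})$. Granting this, Deligne's theory applies: the higher differentials $d^r$, $r\ge 2$, are morphisms between Hodge structures of different weights and, by strictness of morphisms of mixed Hodge structures with respect to the weight filtration, must vanish, so the sequence degenerates at $\mathcal{E}^2$. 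The genuine technical obstacle is therefore verifying the axioms of a cohomological mixed Hodge complex for Steenbrink's complex — the compatibility of the Hodge and weight filtrations on each stratum and the purity forced by smoothness of the $D_J$ — and it is exactly this verification that requires the semistable model rather than the original resolution $X$.
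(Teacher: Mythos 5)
The paper does not prove this theorem: it is quoted from \cite{Steenbrink:77}, and the surrounding text only assembles the ingredients (the semistable normalization $Z$, the cyclic covers $\nu_J\colon D_J\to E_J$, and the filtered-complex machinery of Construction~\ref{cnt:ss}). Your sketch reconstructs precisely the strategy of that cited proof — nearby cycles on the semistable model, purity of $H^*(D_J;\mathbb{Q})$ for the $V$-variety strata, verification of the cohomological mixed Hodge complex axioms, and degeneration because the differentials $d^r$, $r\ge 2$, are morphisms of pure Hodge structures of distinct weights (this, rather than strictness per se, is the precise reason they vanish) — so it is essentially the same approach, and it correctly identifies where the genuine technical work lies.
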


Since its description is cumbersome we will present it only for the cases $n=1,2$, which are the cases we are
interested in. The generalization to the general case can be deduced with a little effort.
Let us fix the following notation:
\[
D^{(\ell)}=\coprod_{\#J=\ell + 1} D_J, \quad D_0^{(\ell)}=\coprod_{\#J=\ell + 1}^{0\notin J} D_J,
\]
where $\coprod$ stands for disjoint union; $D^{(\ell)}$ is also the normalization of the intersection
of $\ell +1$ divisors.
For $n=1$ we present the non-trivial $\mathcal{E}^1_{p,q}$ spaces and the non trivial differentials.
\begin{equation}\label{eq:sss1}
\begin{tikzcd}
H^0(D_0^{(1)};\mathbb{Q})&&\\
H^0(D_0^{(0)};\mathbb{Q})\uar&H^1(D_0^{(0)};\mathbb{Q})&H^2(D_0^{(0)};\mathbb{Q})\\
&&H^0(D_0^{(1)};\mathbb{Q})\uar
\end{tikzcd}
\end{equation}
We do the same thing for $n=2$ (we omit the coefficient ring):
\begin{equation}\label{eq:sss2}
\begin{tikzcd}[column sep=8mm]
H^0(D_0^{(2)})&&\\
H^0(D_0^{(1)})\uar["\delta_{0,1}"]&H^1(D_0^{(1)})&H^2(D_0^{(1)})\\
&&H^0(D^{(2)})\uar["\delta_{2,0}"]&&\\[-25pt]
&&\oplus&&\\[-25pt]
H^0(D_0^{(0)})\ar[uuu, "\delta_{0,0}"]&H^1(D_0^{(0)})\ar[uuu, "\delta_{1,0}"]&H^2(D_0^{(0)})&H^3(D_0^{(0)})&H^4(D_0^{(0)})\\
&&H^0(D^{(1)})\uar["\delta_{2,-1}"]&H^1(D^{(1)})\uar["\delta_{3,-1}"]&H^2(D^{(1)})\uar["\delta_{4,-1}"]\\
&&&&H^0(D^{(2)})\uar["\delta_{4,-2}"]
\end{tikzcd}
\end{equation}
The \emph{vertical} differentials at $\mathcal{E}^1$ are \emph{simplicial} combinations of restriction and Gysin maps between projective varieties
with quotient singularities ($V$-varieties\index{V variety@$V$-variety} in the language of Steenbrink) which are Hodge morphisms. The differentials at $\mathcal{E}^2$ vanish
and it is the reason of the convergence of the spectral sequence and the Mixed Hodge Structure can be somewhat visualized. 

In general, the only interesting elements of $\mathcal{E}^2=\mathcal{E}^\infty$ correspond to the case $p+q=n$.
We are going to define the \emph{vertical filtration\index{filtration vertical}} $W^\mathcal{E}$ of $H^n(\Sigma;\mathbb{Q})$ associated to the 
Steenbrink spectral
sequence as the one such that 
\[
\grArtal^{W^\mathcal{E}}_p H^n(\Sigma;\mathbb{Q})\cong \mathcal{E}^2_{p,n-p}.
\]
This is actually the filtration described earlier with a change of indices: $L_{n-p}=W^\mathcal{E}_p$ (in particular, it
is increasing).
In~\S\ref{sec:peso} we describe the weight filtration of a nilpotent morphism\index{nilpotent morphism}.
We show how Steenbrink relates these two filtrations.
From Theorem~\ref{thm:mon}, we have that $N_\Phi:=\Phi^N-\mathbf{1}$ is a nilpotent morphism.

\begin{theorem}[\cite{Steenbrink:77}]
Let $H^n_{\neq 1}(\Sigma;\mathbb{Q})\oplus H^n_{= 1}(\Sigma;\mathbb{Q})$
be the $\Phi$-invariant decomposition of $H^n(\Sigma;\mathbb{Q})$ such that:
\begin{enumerate}[label=\rm(\roman{enumi})]
\item $H^n_{\neq 1}(\Sigma;\mathbb{Q})$ does not have
$1$ as eigenvalue;
\item $\Phi$ is unipotent on $H^n_{= 1}(\Sigma;\mathbb{Q})$.
\end{enumerate}
Let us denote:
\begin{enumerate}[label=\rm(\alph{enumi})]
\item $W^{\neq 1}$:  the weight filtration of $N_\Phi$ restricted to $H^n_{\neq 1}(\Sigma;\mathbb{Q})$
and centered at $n$;
\item $W^{= 1}$:  the weight filtration of $N_\Phi$ restricted to $H^n_{= 1}(\Sigma;\mathbb{Q})$
and centered at $n+1$.
\end{enumerate}
Then, the \emph{vertical} filtration of $\mathcal{E}$ coincides with $W^{\neq 1}\oplus W^{= 1}$.
\end{theorem}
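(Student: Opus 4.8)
The plan is to reduce the statement to the uniqueness of the weight filtration of a nilpotent endomorphism recalled in~\S\ref{sec:peso}. By Theorem~\ref{thm:mon} the endomorphism $N_\Phi=\Phi^N-\mathbf 1$ is nilpotent, and the splitting $H^n(\Sigma;\mathbb{Q})=H^n_{\neq 1}(\Sigma;\mathbb{Q})\oplus H^n_{=1}(\Sigma;\mathbb{Q})$ is $N_\Phi$-stable. The weight filtration of a nilpotent $N$ centered at $c$ is characterized (cf.\ Proposition~\ref{prop:weight}) as the unique increasing filtration $W$ with $N(W_k)\subset W_{k-2}$ and $N^{k}\colon\grArtal^{W}_{c+k}\xrightarrow{\sim}\grArtal^{W}_{c-k}$ for all $k\ge 0$. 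Hence it suffices to verify that the vertical filtration $W^{\mathcal{E}}$, restricted to $H^n_{\neq 1}(\Sigma;\mathbb{Q})$ (centered at $n$) and to $H^n_{=1}(\Sigma;\mathbb{Q})$ (centered at $n+1$), satisfies these two properties for $N_\Phi$; the desired equality $W^{\mathcal{E}}=W^{\neq 1}\oplus W^{=1}$ then follows by uniqueness on each block.

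First I would realize $N_\Phi$ at the level of the spectral sequence. Its $\mathcal{E}^1$ page consists of cohomology groups of the smooth projective $V$-varieties $D^{(\ell)}$ and $D_0^{(\ell)}$ displayed in \eqref{eq:sss1}--\eqref{eq:sss2}, and each such group $H^{*}(D^{(\ell)})$ occurs in several of these terms. On $\mathcal{E}^1$ the operator $N_\Phi$ is given by the tautological maps identifying each copy with the one whose $W^{\mathcal{E}}$-index is smaller by $2$; this is precisely the first property $N_\Phi(W^{\mathcal{E}}_k)\subset W^{\mathcal{E}}_{k-2}$. Because the spectral sequence degenerates at $\mathcal{E}^2$ (Theorem~\ref{thm:steenbrink}), this action passes to $\grArtal^{W^{\mathcal{E}}}\cong\mathcal{E}^2=\mathcal{E}^\infty$ and computes $N_\Phi$ on the associated graded of $H^n(\Sigma;\mathbb{Q})$, via the identification $\grArtal^{W^{\mathcal{E}}}_p H^n(\Sigma;\mathbb{Q})\cong\mathcal{E}^2_{p,n-p}$.

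The heart of the argument, and the step I expect to be the main obstacle, is the Hard-Lefschetz-type isomorphism $N_\Phi^{k}\colon\grArtal^{W^{\mathcal{E}}}_{c+k}\xrightarrow{\sim}\grArtal^{W^{\mathcal{E}}}_{c-k}$. Through the explicit description of $\mathcal{E}^2_{p,n-p}$ as a subquotient of the cohomology of the $D_J$'s, the composite $N_\Phi^{k}$ is assembled from the tautological maps of the previous paragraph, so that on each summand it reduces, up to the subquotient bookkeeping, to an instance of the classical Hard Lefschetz theorem applied to the smooth projective $D^{(\ell)}$ (valid for $V$-varieties), which supplies the primitive decomposition matching the two graded pieces. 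The delicate points are two. One must check that passing to cohomology at $\mathcal{E}^2$ preserves these isomorphisms, which is where the degeneration at $\mathcal{E}^2$ is used a second time, to identify the $\mathcal{E}^2$-terms with honest kernels and cokernels of the $\mathcal{E}^1$-differentials. And one must keep track of the two centerings: the extra unit of weight carried by the eigenvalue-$1$ block (centering $n+1$ rather than $n$) originates in the additional row of $\mathcal{E}^1$-terms involving the strict transform $\hat V=E_0$, i.e.\ the $D^{(\ell)}$ with $0\in J$, which is absent from the $H^n_{\neq 1}$ block. This matches the bounds of the Monodromy Theorem (Theorem~\ref{thm:mon}), by which the largest Jordan block has size $n+1$ for the eigenvalue $1$ and $n$ otherwise; these bounds are equivalent to the symmetry of $\grArtal^{W^{\mathcal{E}}}$ about the centers $n+1$ and $n$, respectively, and reassembling them against the single operator $N_\Phi$ is exactly what produces the direct sum $W^{\neq 1}\oplus W^{=1}$ of the statement.
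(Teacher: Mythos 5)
First, a point about the comparison itself: the paper does \emph{not} prove this theorem. It is quoted verbatim from \cite{Steenbrink:77} (as the bracketed attribution indicates), and the text immediately proceeds to its corollaries. So your attempt can only be measured against Steenbrink's argument, not against anything in this paper. Your skeleton is indeed the correct one and matches how the actual proof is organized: the weight filtration of a nilpotent endomorphism centered at $c$ is uniquely characterized by $N(W_k)\subset W_{k-2}$ together with the isomorphisms $N^k\colon \grArtal^W_{c+k}\to\grArtal^W_{c-k}$ (Proposition~\ref{prop:weight}), so it suffices to verify these two properties for the vertical filtration on each of the two blocks; and the first property does come from the fact that $N_\Phi$ acts on the $\mathcal{E}^1$-page by ``tautological'' identifications between copies of the $H^*(D_J)$ placed in bidegrees whose vertical index differs by~$2$.

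The genuine gap is in the step you yourself call the heart. The isomorphism $N_\Phi^{k}\colon\grArtal^{W^{\mathcal E}}_{c+k}\to\grArtal^{W^{\mathcal E}}_{c-k}$ is \emph{not} an instance of classical Hard Lefschetz on the smooth projective pieces $D^{(\ell)}$, and degeneration at $\mathcal{E}^2$ cannot be ``used a second time'' to transfer it: the $\mathcal{E}^2$-terms are by definition the cohomology of $(\mathcal{E}^1,d_1)$, and degeneration only says that $d_2,d_3,\dots$ vanish, i.e.\ $\mathcal{E}^2=\mathcal{E}^\infty$; it gives no information whatsoever about maps induced by $N_\Phi$ between $\mathcal{E}^2$-terms being isomorphisms. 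Note also that on $\mathcal{E}^1$ the operator $N_\Phi$ is a sum of identity and zero maps between \emph{distinct} summands; it is not cup product with an ample class on any single $D^{(\ell)}$, so Hard Lefschetz on the pieces produces a second, different Lefschetz operator $L$, not the one you need. The actual proof requires showing that the full $\mathcal{E}^1$-page, equipped with the two commuting actions of $N_\Phi$ and $L$ and with the pairing coming from the polarizations of the $H^*(D^{(\ell)})$, is a polarized bigraded Hodge--Lefschetz structure, and that $d_1$ is compatible with it; a positivity argument (due to Deligne, with complete proofs later supplied by M.~Saito and by Guill\'en and Navarro Aznar) then shows that the cohomology of $(\mathcal{E}^1,d_1)$ inherits the Lefschetz isomorphisms for $N_\Phi$. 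This positivity step is irreducible --- it is exactly where Steenbrink's original argument was delicate --- and a morphism of pure Hodge structures between subquotients of cohomology of smooth projective varieties can perfectly well fail to be an isomorphism, so ``subquotient bookkeeping'' cannot replace it. Finally, invoking Theorem~\ref{thm:mon} to justify the two centerings is circular in spirit: the refined bounds of the Monodromy Theorem (maximal Jordan block $n+1$, and $n$ for eigenvalue $1$) are normally deduced \emph{from} the theorem you are proving, not used as an input to it.
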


We obtain some consequences of this result.

\begin{corollary}
The Jordan form\index{Jordan form} of $\Phi$ is determined by the spectral sequence, more precisely, by the action
of $\Phi$ on $\mathcal{E}^2_{p,n-p}$.
\end{corollary}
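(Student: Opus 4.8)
The plan is to express the Jordan form of $\Phi$ through the family of $k$-characteristic polynomials $\Delta^{[k]}$ and then to read each $\Delta^{[k]}$ off the $\Phi$-action on the spaces $\mathcal{E}^2_{p,n-p}$ by means of the preceding theorem. By the very definition of $\Delta^{[k]}$ together with Proposition~\ref{prop:weight}\ref{prop:weight_eigen}, knowing all the $\Delta^{[k]}$ is equivalent to knowing the Jordan form of $\Phi$: the degree of $\Delta^{[k]}$ counts the Jordan blocks of size $k+1$ and its roots record the corresponding eigenvalues (which are roots of unity by Theorem~\ref{thm:mon}). Thus it suffices to recover every $\Delta^{[k]}$ from the spectral sequence.

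First I would unwind the definition. Taking $m=N$, so that $\mathbf{1}-\Phi^m=-N_\Phi$ is precisely the nilpotent endomorphism of the preceding theorem, $\Delta^{[k]}$ is the characteristic polynomial of the automorphism induced by $\Phi$ on $\grArtal^W_k$, where $W$ is the weight filtration of $N_\Phi$. Since $N_\Phi$ is a polynomial in $\Phi$, the automorphism $\Phi$ preserves $W$ and acts on each graded piece, so these characteristic polynomials are well defined. Now I invoke the preceding theorem: on $H^n_{\neq 1}(\Sigma;\mathbb{Q})$ and $H^n_{=1}(\Sigma;\mathbb{Q})$ the filtration $W$ agrees, after the shifts of centre $n$ and $n+1$, with the restrictions of the vertical filtration $W^{\mathcal{E}}$, whose graded pieces satisfy $\grArtal^{W^\mathcal{E}}_p H^n(\Sigma;\mathbb{Q})\cong\mathcal{E}^2_{p,n-p}$. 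Because the monodromy acts on Steenbrink's spectral sequence and this isomorphism is compatible with that action, the $\Phi$-module $\grArtal^W_k$ is, summand by summand, one of the $\mathcal{E}^2_{p,n-p}$. Hence $\Delta^{[k]}$ is the product of the characteristic polynomials of $\Phi$ on the relevant $\mathcal{E}^2_{p,n-p}$, and the whole collection $\{\Delta^{[k]}\}_{k}$ --- equivalently the Jordan form of $\Phi$ --- is determined by the $\Phi$-action on the $\mathcal{E}^2_{p,n-p}$.

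The substantive point is not a computation but the bookkeeping matching weight labels to block sizes, and I expect the main obstacle to be twofold. First, one must verify that the isomorphisms $\grArtal^{W^\mathcal{E}}_p H^n\cong\mathcal{E}^2_{p,n-p}$ are genuinely $\Phi$-equivariant, so that eigenvalues pass to the graded pieces. Second, one must track the two distinct centres $n$ (on the part without eigenvalue $1$) and $n+1$ (on the unipotent part) relative to the centre $0$ used in the definition of $\Delta^{[k]}$, so that a block of size $k+1$ is assigned to the correct graded index in each summand. Once these reindexings are pinned down, the corollary follows formally from the preceding theorem together with the standard description of the weight filtration of a nilpotent operator via its primitive parts.
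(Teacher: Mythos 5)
Your proposal is correct and follows essentially the same route as the paper, which states this corollary without a separate proof as an immediate consequence of the preceding Steenbrink theorem: the vertical filtration on the $\mathcal{E}^2_{p,n-p}$ coincides (after the centre shifts $n$ and $n+1$ on the non-unipotent and unipotent parts) with the weight filtration of $N_\Phi$, and by Proposition~\ref{prop:weight}\ref{prop:weight_eigen} the $\Phi$-action on the graded pieces encodes exactly the block sizes and eigenvalues, i.e.\ the polynomials $\Delta^{[k]}$ and hence the Jordan form. Your explicit bookkeeping of the equivariance and of the two centres (splitting each $\mathcal{E}^2$-piece into its eigenvalue-$1$ and eigenvalue-$\neq 1$ parts before matching graded indices) is precisely the unwinding the paper leaves implicit.
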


We are going to be more concrete for $n=1,2$, though the same ideas can be generalized for arbitrary dimension.

\begin{corollary}\label{cor:Jordan1}
Let $(V,0)\subset(\mathbb{C}^2,0)$ be a germ of isolated singularity and let $\Phi$ be its monodromy.
\begin{enumerate}[label=\rm(\roman{enumi})]
\item The first column of \eqref{eq:sss1} is the cochain complex of the dual graph of $D$.
\item The first column on $\mathcal{E}^2$
is the cohomology of this graph; in particular, the graph is connected. 
\item There is a natural action of $\Phi$ on this dual graph, which determines the number of $2$-Jordan blocks for
each eigenvalue ($\neq 1$). More precisely,
\[
\Delta^{[1]}(t)=\frac{(t-1)\Delta_{\Phi,H^0(D^{(1)})}(t)}{\Delta_{\Phi,H^0(D^{(0)})}(t)}.
\]
\end{enumerate}
\end{corollary}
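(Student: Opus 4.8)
The plan is to read all three assertions off the explicit shape of $\mathcal{E}^1$ in~\eqref{eq:sss1}, using the cited description of Steenbrink's vertical differentials as simplicial combinations of restriction and Gysin maps, together with the theorem identifying the vertical filtration with $W^{\neq1}\oplus W^{=1}$ and the Monodromy Theorem~\ref{thm:mon}. Throughout, $\Gamma$ denotes the dual graph of $D=(g\circ\nu)^{-1}(E)$.

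For (i), I would first identify the two spaces in the first column ($p=0$). The term $\mathcal{E}^1_{0,0}=H^0(D_0^{(0)};\mathbb{Q})$ has one basis vector per connected component of $D$, i.e.\ one per vertex of $\Gamma$, and $\mathcal{E}^1_{0,1}=H^0(D_0^{(1)};\mathbb{Q})$ has one per connected component of $\coprod_{1\le i<j}D_{ij}$, i.e.\ one per double point, hence one per edge. By the stated form of the $\mathcal{E}^1$-differentials, $\delta_{0,0}$ sends the class of a component $D_j$ to the signed sum of the incident double points, which is exactly the simplicial coboundary $C^0(\Gamma)\to C^1(\Gamma)$; the only thing to pin down is that the orientations of the cyclic covers $\nu_J$ can be chosen so that the signs match the coboundary on the nose.

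For (ii), since $\mathcal{E}^2$ is the cohomology of $\mathcal{E}^1$ and the sequence degenerates there (Theorem~\ref{thm:steenbrink}), the first column of $\mathcal{E}^2$ is the cohomology of the complex of (i): $\mathcal{E}^2_{0,0}=\ker\delta_{0,0}=H^0(\Gamma)$ and $\mathcal{E}^2_{0,1}=\coker\delta_{0,0}=H^1(\Gamma)$. Connectivity follows from convergence: $\mathcal{E}^2_{0,0}=\mathcal{E}^\infty_{0,0}$ is a graded piece of $H^0(\Sigma;\mathbb{Q})=\mathbb{Q}$ (the Milnor fiber of an isolated singularity is connected), so $\dim H^0(\Gamma)\le 1$; as $\Gamma$ is nonempty this forces $H^0(\Gamma)=\mathbb{Q}$, i.e.\ $\Gamma$ is connected, with $\Phi$ acting trivially on $H^0(\Gamma)$.

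For (iii), the geometric monodromy is realized on the semistable model as the deck transformation of the covers $\nu_J$, so $\Phi$ permutes components and double points of $D$ and acts simplicially on $\Gamma$, compatibly with $\delta_{0,0}$; this is the announced natural action. By the theorem identifying the vertical filtration with $W^{\neq1}\oplus W^{=1}$ (centered at $n=1$, resp.\ $n+1=2$), the size-$2$ Jordan blocks have their outer graded pieces in $\mathcal{E}^2_{0,1}$ and $\mathcal{E}^2_{2,-1}$; and by Theorem~\ref{thm:mon} the eigenvalue-$1$ part is semisimple and centered at weight $2$, so it contributes nothing to $\mathcal{E}^2_{0,1}$. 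Hence $\mathcal{E}^2_{0,1}=H^1(\Gamma)$ is pure of eigenvalue $\neq1$ and records exactly the size-$2$ blocks, giving $\Delta^{[1]}=\Delta_{\Phi,H^1(\Gamma)}$. Multiplicativity of characteristic polynomials along the $\Phi$-equivariant complex $0\to H^0(D_0^{(0)})\xrightarrow{\delta_{0,0}}H^0(D_0^{(1)})\to0$ then yields
\[
\Delta_{\Phi,H^1(\Gamma)}=\Delta_{\Phi,H^0(\Gamma)}\cdot\frac{\Delta_{\Phi,H^0(D_0^{(1)})}}{\Delta_{\Phi,H^0(D_0^{(0)})}}=(t-1)\,\frac{\Delta_{\Phi,H^0(D_0^{(1)})}}{\Delta_{\Phi,H^0(D_0^{(0)})}},
\]
using (ii). To reach the stated form with $D^{(\ell)}$ in place of $D_0^{(\ell)}$, I would observe that each strict-transform branch meets the exceptional locus in a single point, giving a $\Phi$-equivariant bijection between the components of $D_0$ and the double points in $\coprod_j D_{0j}$; the resulting extra factors in $\Delta_{\Phi,H^0(D^{(0)})}/\Delta_{\Phi,H^0(D_0^{(0)})}$ and $\Delta_{\Phi,H^0(D^{(1)})}/\Delta_{\Phi,H^0(D_0^{(1)})}$ coincide and cancel in the ratio.

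The hard part will be (iii): keeping the $\Phi$-action coherent across the semistable reduction and the spectral sequence, and correctly matching the centerings of $W^{\neq1}$ and $W^{=1}$ so that $\mathcal{E}^2_{0,1}$ is identified with the graded piece counted by $\Delta^{[1]}$ and is free of the eigenvalue $1$. The sign bookkeeping in (i) and the equivariant-bijection argument for the $D$ versus $D_0$ cancellation are routine once this framework is in place.
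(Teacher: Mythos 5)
Your proposal is correct and follows essentially the route the paper intends: Corollary~\ref{cor:Jordan1} is stated there as a direct consequence of the shape of \eqref{eq:sss1}, Steenbrink's identification of the vertical filtration with $W^{\neq 1}\oplus W^{=1}$, and Theorem~\ref{thm:mon} (which kills eigenvalue~$1$ in weight~$0$ for $n=1$), and this is exactly the chain of deductions you carry out. Your reconciliation of the $D_0^{(\ell)}$ terms appearing in the first column of \eqref{eq:sss1} with the $D^{(\ell)}$ terms in the stated formula --- via the $\Phi$-equivariant bijection between components of $D_0$ and their unique intersection points with the exceptional part, whose contributions cancel in the ratio --- is a detail the paper leaves implicit, and you handle it correctly.
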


\begin{corollary}\label{cor:Jordan2}
Let $(V,0)\subset(\mathbb{C}^3,0)$ be a germ of isolated singularity and let $\Phi$ be its monodromy.
\begin{enumerate}[label=\rm(\roman{enumi})]
\item The first column of \eqref{eq:sss2} is the cochain complex of the dual complex of $D$.
\item The first column on $\mathcal{E}^2$
the cohomology of this complex; in particular, the $2$-complex must be connected and
with vanishing $H^1$. 
\item There is a natural action of $\Phi$ on this dual complex, which determines the number of $3$-Jordan blocks for
each eigenvalue ($\neq 1$). More precisely,
\[
\Delta^{[2]}(t)=\frac{\Delta_{\Phi,H^0(D^{(2)})}(t)\Delta_{\Phi,H^0(D^{(0)})}(t)}{(t-1)\Delta_{\Phi,H^0(D^{(1)})}(t)}.
\]

\item The maps $\delta_{2,0}, \delta_{3,-1}, \delta_{4,-1}$ are surjective; the maps $\delta_{1,0}, \delta_{2,-1}$ are injective.
These data determine the Jordan blocks of size~$2$:
\[
\Delta^{[1]}(t)=\frac{(t-1)^{\dim E^2_{4,-2}}\Delta_{\Phi,H^1(D^{(1)})}(t)}{(t-1)^{\dim E^2_{0,2}}\Delta_{\Phi,H^1(D^{(0)})}(t)}.
\]
\end{enumerate}
\end{corollary}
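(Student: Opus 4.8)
The plan is to extract everything directly from the explicit $\mathcal{E}^1$-page \eqref{eq:sss2}, using three inputs already available: the degeneration at $\mathcal{E}^2$ and convergence to $H^*(\Sigma;\mathbb{Q})$ (Theorem~\ref{thm:steenbrink}), Steenbrink's identification of the vertical filtration with $W^{\neq 1}\oplus W^{= 1}$, and the dictionary of Proposition~\ref{prop:weight}\ref{prop:weight_eigen}, by which $\Delta^{[k]}$ records the number and eigenvalues of the size-$(k+1)$ Jordan blocks. The whole argument parallels the surface-curve case of Corollary~\ref{cor:Jordan1}, one dimension up. Throughout, the $\Phi$-action on each $H^*(D^{(\ell)})$ is the deck action of the cyclic cover $\nu_J\colon D_J\to E_J$, and this is where the eigenvalues enter.

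For (i) and (ii) I would first unwind Steenbrink's description of the differentials in the column $p=0$: the maps $\delta_{0,0},\delta_{0,1}$ are pure restriction maps $H^0(D_0^{(\ell)})\to H^0(D_0^{(\ell+1)})$ (no Gysin term arises for $p=0$), and since $H^0$ of a disjoint union of normalized strata counts components, this column is by definition the simplicial cochain complex of the dual $2$-complex of the exceptional divisor, giving (i). Degeneration at $\mathcal{E}^2$ then identifies $\mathcal{E}^2_{0,q}$ with the cohomology of that complex. For the topological assertions of (ii) I would invoke convergence: the unique term with $p+q=0$ is $\mathcal{E}^2_{0,0}$, so it injects into $H^0(\Sigma;\mathbb{Q})=\mathbb{Q}$ and the complex is connected; and among the terms with $p+q=1$ is $\mathcal{E}^2_{0,1}$, so the $1$-connectedness of the Milnor fibre of an isolated surface singularity, giving $H^1(\Sigma;\mathbb{Q})=0$, forces $\mathcal{E}^2_{0,1}=0$, i.e. the vanishing of $H^1$ of the dual complex.

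For (iii) I would isolate the maximal Jordan blocks. By Theorem~\ref{thm:mon} the top block size is $n+1=3$ and the eigenvalue $1$ contributes none, so $\Delta^{[2]}$ carries no factor $t-1$; via the weight-filtration identification and Proposition~\ref{prop:weight} it is the characteristic polynomial of $\Phi$ on the top-weight corner, which is identified with the top cohomology of the dual complex of the full divisor $D$. To turn this into the stated ratio I would use multiplicativity of characteristic polynomials along the length-two complex $H^0(D^{(0)})\to H^0(D^{(1)})\to H^0(D^{(2)})$: the alternating product of the $\Delta_{\Phi,H^0(D^{(\ell)})}$ equals that of its cohomology, and feeding in connectedness ($H^0=\mathbb{Q}$, contributing the single factor $t-1$ with trivial eigenvalue) and the vanishing of $H^1$ leaves exactly
\[
\Delta^{[2]}(t)=\frac{\Delta_{\Phi,H^0(D^{(2)})}(t)\,\Delta_{\Phi,H^0(D^{(0)})}(t)}{(t-1)\,\Delta_{\Phi,H^0(D^{(1)})}(t)}.
\]

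Part (iv) is the delicate one and I would treat it last. The injectivity of $\delta_{1,0},\delta_{2,-1}$ and surjectivity of $\delta_{2,0},\delta_{3,-1},\delta_{4,-1}$ are not formal: they assert that the relevant Gysin maps into $H^*(D^{(0)})$ are injective and the dual restriction maps surjective, which I would deduce from the fact that the $\mathcal{E}^1$-differentials are morphisms of pure Hodge structures together with hard Lefschetz on the smooth projective strata $D^{(\ell)}$, strictness then excluding spurious kernels and cokernels. Granting this, the dimensions of the $\mathcal{E}^2$ pieces feeding the size-$2$ blocks are pinned down exactly, and the same alternating-ratio bookkeeping applied to the row of the $H^1(D^{(\ell)})$ yields $\Delta^{[1]}$, the exponents $\dim E^2_{4,-2}$ and $\dim E^2_{0,2}$ being precisely the leftover contributions of the two extreme corners. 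The main obstacle is exactly this corner bookkeeping: matching the eigenvalue-$1$ pieces (the powers of $t-1$), disentangling the exceptional strata $D_0^{(\ell)}$ of (i)--(ii) from the full strata $D^{(\ell)}$ of (iii)--(iv), and keeping the $\Phi$-action consistent across the two; the Hodge-theoretic injectivity and surjectivity in (iv) is the one genuinely non-combinatorial ingredient on which the exact exponents depend.
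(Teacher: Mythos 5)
Parts (i)--(iii) of your proposal are essentially the intended argument: the first column of \eqref{eq:sss2} is the cochain complex of the dual complex, and degeneration at $\mathcal{E}^2$ (Theorem~\ref{thm:steenbrink}) together with convergence to $H^*(\Sigma;\mathbb{Q})$ and Milnor's theorem (the fiber is a wedge of $2$-spheres, so $\tilde H^j(\Sigma;\mathbb{Q})=0$ for $j\neq 2$) gives connectedness and the vanishing of $H^1$; $\Delta^{[2]}$ is then extracted by multiplicativity of characteristic polynomials along that column, using Theorem~\ref{thm:mon} to rule out the eigenvalue $1$. One caveat: the first column is built from the spaces $H^0(D_0^{(\ell)})$, not $H^0(D^{(\ell)})$, so the complex on which you run multiplicativity must be the one whose connectedness and $H^1$-vanishing you actually proved in (ii); the passage to the strata containing $D_0$ (where $\Phi$ acts trivially, those covers being $1$-sheeted since $N_0=1$) is a bookkeeping step you elide rather than perform.

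The genuine gaps are in (iv), in two places. First, you declare the injectivity of $\delta_{1,0},\delta_{2,-1}$ and the surjectivity of $\delta_{2,0},\delta_{3,-1},\delta_{4,-1}$ ``not formal'' and propose to prove them by hard Lefschetz and strictness on the strata. This is the wrong mechanism: these five statements are consequences of convergence of exactly the same kind as your part (ii). The relevant positions sit in total degrees $1$, $3$ and $4$, and since $H^1(\Sigma;\mathbb{Q})=H^3(\Sigma;\mathbb{Q})=H^4(\Sigma;\mathbb{Q})=0$, degeneration forces $\mathcal{E}^2_{p,q}=0$ whenever $p+q\in\{1,3,4\}$; reading off $\mathcal{E}^2_{1,0}$, $\mathcal{E}^2_{2,-1}$, $\mathcal{E}^2_{2,1}$, $\mathcal{E}^2_{3,0}$, $\mathcal{E}^2_{4,0}$ gives precisely the stated injectivity and surjectivity. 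Hard Lefschetz cannot substitute for this: for instance, surjectivity of the Gysin map $\delta_{3,-1}\colon H^1(D^{(1)})\to H^3(D_0^{(0)})$ is, given degeneration, \emph{equivalent} to $H^3(\Sigma;\mathbb{Q})=0$, i.e.\ to Milnor's connectivity theorem, which is topological input and not a purity statement about the strata. Second, and more seriously, the formula for $\Delta^{[1]}$ is exactly the ``corner bookkeeping'' that you acknowledge as the main obstacle but never carry out; it is where the two different centerings in Steenbrink's theorem enter. The eigenvalue-$\neq 1$ part of $\Delta^{[1]}$ is the characteristic polynomial on $\mathcal{E}^2_{1,1}=\coker\delta_{1,0}$, which by injectivity of $\delta_{1,0}$ is the ratio of the $H^1$-characteristic polynomials. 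The eigenvalue-$1$ part is $(t-1)^{m}$, where $m$ is the number of unipotent Jordan blocks of size $2$; because $W^{=1}$ is centered at $n+1=3$ while $W^{\neq 1}$ is centered at $n=2$, these blocks live in the eigenvalue-$1$ part of $\mathcal{E}^2_{4,-2}$, and since the eigenvalue-$\neq 1$ part of $\mathcal{E}^2_{4,-2}$ is isomorphic to $\mathcal{E}^2_{0,2}$ via $N_\Phi^2$ (Proposition~\ref{prop:weight}), one gets $m=\dim\mathcal{E}^2_{4,-2}-\dim\mathcal{E}^2_{0,2}$. That is the origin of the exponents $\dim E^2_{4,-2}$ and $\dim E^2_{0,2}$ in the statement; without this step the formula in (iv) is asserted, not proved.
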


The condition on the normal blocks of maximal size can be expressed for any dimension. We consider the dual $n$-complex $\Gamma_D$
of the divisor $D$. We have that $\tilde{H}^j(\Gamma_D)=0$, if $j\neq n$ and the characteristic polynomial of $\Phi$ on 
$\tilde{H}^n(\Gamma_D)$ equals $\Delta^{[n]}$. The fact that this maximal bound is reached appeared in the same number of 
\emph{Invent. Math.} in two papers of N.~A'Campo~\cite{ACampo1973c} and B.~Malgrange~\cite{mg:73}.
 \section{Embedded \texorpdfstring{$\mathbb{Q}$}{Q}-resolutions}\label{sec:qres}

Thanks to Steenbrink's results, in order to obtain the Jordan form of the monodromy of an isolated hypersurface singularity
we need an embedded resolution of the hypersurface.
The main goal of \cite{ea:mams} was to provide such an embedded resolution of a SIS.
A.~Melle and me tried to use the same techniques to obtain an embedded resolution of a $k$-LYS  but we encountered technical difficulties. 

Note that the main ingredient of
Steenbrink spectral sequence is the (non-smooth) space of the semistable normalization. Actually the main goal in
\cite{Steenbrink:77} is to extend the results of \cite{Steenbrink:76} to the case of $V$-varieties\index{V variety@$V$-variety}.
Having these facts in mind, J. Mart{\'i}n-Morales~\cite{jmm:14} provided what is called an embedded $\mathbb{Q}$-resolution for a $k$-LYS,
where the Steenbrink spectral sequence can be constructed with the same properties. For SIS, his construction
follows the same basic ideas of~\cite{ea:mams} but the output is combinatorially much more simple.

Some background about cyclic abelian singularities can be found in \S\ref{sec:quot}.

\begin{definition}
Let $X$ be a $V$-variety\index{V variety@$V$-variety} of dimension~$n+1$, i.e. a complex variety with cyclic abelian singularities. Let $D$ be a divisor in $X$. We
say that $D$ is a\emph{ $\mathbb{Q}$-simple normal crossing divisor\index{Q simple normal crossing divisor@$\mathbb{Q}$-simple normal crossing divisor}} if $\forall P\in X$ there is a \emph{chart} $(\mathbb{C}^{n+1},0)\to(X,P)$
realizing the quotient singularity type of $X$ such that the preimage of $D$ is contained in the coordinate
hyperplanes of $\mathbb{C}^{n+1}$.
\end{definition}

\begin{definition}
Let $(V,0)\subset(\mathbb{C}^{n+1},0)$ be a germ of isolated hypersurface singularity.
An \emph{embedded $\mathbb{Q}$-resolution\index{Q resolution@$\mathbb{Q}$-resolution!embedded}} of $V$ is a proper birational map $\pi:(X,E)\to(\mathbb{C}^{n+1},0)$ which is an isomorphism outside the origin and
such that $X$ is a $V$-variety and $\pi^*V$ is a $\mathbb{Q}$-simple normal crossing divisor.
\end{definition}

The semistable normalization is defined in the same way and it is still a $V$-variety\index{V variety@$V$-variety} and $D$ is a $\mathbb{Q}$-simple normal crossing divisor~\cite{jmm:16}.
The main feature is that its combinatorics are much simpler that the ones of an embedded resolution.

It is a well-known result that the embedded resolution of plane curve singularities can be done by blowing-up points. We have stated in Theorem~\ref{thm:qres2} that an embedded $\mathbb{Q}$-resolution can be done using weighted 
blowing-up on points, see~\S\ref{sec:wbu} for details about weighted 
blowing-ups.

For hypersurface singularities in dimension~$2$, in general, one needs to blow up along curves. The hypersurface
singularities that can be solved by blowing-up points are called \emph{absolutely isolated singularities} (AIS).
They have been studied specially in problems related with complex singular foliations. 
They have been also useful to check some conjectures, e.g., A.~Melle proved  Durfee's conjecture~\cite{durfee:78} for AIS in \cite{MH00}. This conjecture states that $6$
times the geometric genus is not greater than the Milnor number.

It is very rare for $k$-LYS to be AIS; actually only when $\singArtal C_d=\emptyset$.
If it is not the case, the
embedded resolution for SIS of \cite{ea:mams} needs blowing-ups along points and smooth rational curves. 
Nevertheless, the method of \cite{jmm:14} provides both for SIS and $k$-LYS, a
$\mathbb{Q}$-resolution process where only weighted blowing-ups of points are needed. It seems reasonable to say that 
$k$-LYS are \emph{$\mathbb{Q}$-absolutely isolated singularities} and to study further features of these singularities.

Let us sketch the $\mathbb{Q}$-resolution process for $k$-LYS~\cite{jmm:14}. The first step is to blow-up the origin in $\mathbb{C}^3$.
This way, we obtain an exceptional divisor $E_1\cong\mathbb{P}^2$, and the singularities of $\hat{V}$ are in $\singArtal C_d$. We have seen this in Remark~\ref{rem:local_eq}. The surfaces $\hat{V}$
and $E_1$ are not transversal exactly at $\singArtal C_d$.

Let us fix $P\in\singArtal C_d$, which we assume to be $[0:0:1]$, in order to describe the part
of the $\mathbb{Q}$-resolution process over $P$. Let us recall that there is a chart where $E_1$ has equation
$z=0$ and $\hat{V}$ has equation $z^k+f_{d,P}(x,y)=0$. 

To simplify the notations
the strict transform of a divisor under a weighted blow-up will be denoted as the divisor,
the ambient space will make the difference from the divisor and its strict transform.

As stated in Theorem~\ref{thm:qres2}, there is a sequence of weighted blowing-ups 
which provide the minimal embedded $\mathbb{Q}$-resolution of $(C_d,P)$.
Let us assume that we start with a $(p,q)$-weighted blowing-up 
$\sigma^{P}_1:(Z^P_1,E^P_1)\to(E_1,P)$. Let $P_1,\dots,P_r$ the points in $E^P_1$
for which $(\sigma^{P}_1)^{-1}(C_d)$ is not a $\mathbb{Q}$-normal crossing divisor.  Let  
$m^P_1$
the $(p,q)$-weighted multiplicity of $C_d$ at $P$.

Let us consider now the situation in dimension~$3$ around $P\in E\subset\hat{\mathbb{C}}^3$.
We perform a $(kp,kq,m_1^P)$-weighted blowing-up 
$\tilde{\sigma}^P_1:(\tilde{Z},\tilde{E}^P_1)\to(\hat{\mathbb{C}}^3,P)$
with the above coordinates $(x,y,z)$. Actually:
\begin{itemize}
\item $E_1\cap\tilde{E}^P_1=E^P_1$,
\item 
$E_1\cap\hat{V}=C_d$, and
\item $E_1\cap\tilde{E}^P_1\cap\hat{V}=\{P_1,\dots,P_r\}$.
\end{itemize}
 
These are the points in where $E_1,\tilde{E}^P_1,\hat{V}$
do not form a $\mathbb{Q}$-normal crossing divisor (as in dimension~$2$).

We iterate the process, though we may find some technical difficulties, since $\tilde{Z}$
(and the subsequent spaces) may not be smooth, but weighted blowing-up are still valid.
This is one of the consequences.

\begin{proposition}
The topology of the part of the resolution process of $V$ after the first blowing-up is completely determined by the combinatorics
of $C_d$, including the multiplicities of the divisor.
\end{proposition}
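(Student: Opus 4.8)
The plan is to establish this proposition by reducing the global claim to the local analysis already carried out at each singular point of the tangent cone. The statement asserts that everything in the resolution process \emph{after} the first blow-up of the origin is combinatorially determined by $C_d$. First I would recall from Remark~\ref{rem:local_eq} that after the first blow-up the only failure of $\mathbb{Q}$-normal crossing occurs exactly along $\singArtal(C_d)$, and that at each such point $P$ the local equation of $\hat{V}$ is $z^k+f_{d,P}(x,y)=0$, where $f_{d,P}$ depends only on the local topological type of $(C_d,P)$. Thus the global resolution process decouples into finitely many \emph{independent} local problems, one for each $P\in\singArtal(C_d)$, and it suffices to show each local problem is governed by the combinatorics.

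Next I would analyze each local problem. Fixing $P=[0:0:1]$, the $\mathbb{Q}$-resolution proceeds by the sequence of weighted blowing-ups described before the statement: a $(p,q)$-weighted blow-up of $(C_d,P)$ in dimension~$2$ lifts to a $(kp,kq,m_1^P)$-weighted blow-up in dimension~$3$, where the weighted multiplicity $m_1^P$ is read off from $(C_d,P)$. The key observation is that the dimension-$3$ weighted blow-up is entirely dictated by the dimension-$2$ weighted blow-up of the plane curve singularity together with the fixed integer $k$: the weights, the new exceptional divisor $\tilde{E}_1^P$, and the points $\{P_1,\dots,P_r\}$ where $\mathbb{Q}$-normal crossing fails are all determined by the minimal embedded $\mathbb{Q}$-resolution of $(C_d,P)$. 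Since the minimal embedded $\mathbb{Q}$-resolution of a plane curve singularity is itself a combinatorial invariant of its topological type, I would argue by induction on the steps of the resolution that each subsequent weighted blow-up, its weights, the self-intersections and multiplicities of the divisors it creates, and the locations of the remaining bad points are all combinatorially determined.

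The main technical point to verify carefully is the bookkeeping of the multiplicities $N_j$ of the divisors $E_j$ appearing in the total transform, since the proposition explicitly includes these in its conclusion. Here I would track how the multiplicity of $F\circ\pi$ along each new exceptional divisor is computed from the weights of the blow-up and the multiplicities of the divisors already present, using that at each stage the local equation has the quasi-homogeneous suspension form $z^k+(\text{plane curve part})$. This propagation rule is numerical and depends only on the combinatorial data carried along (weights, weighted multiplicities, and incidences), not on the specific coefficients of $f_{d,P}$ or of the higher-order terms $f_m$, $m>d+k$, which by Definition~\ref{dfn:lys} do not enter the resolution. Assembling the local data and recording that the global incidence of the components is already encoded in the combinatorics of $C_d$ (via the self-intersections from Proposition~\ref{prop:abstract_inter} and the transversality of $\hat{V}$ and $E_1$ away from $\singArtal(C_d)$) yields the decorated dual complex, with all multiplicities, as a function of the combinatorics of $C_d$ and $k$ alone.

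The hard part will be making rigorous the claim that the entire iterated weighted blowing-up procedure—over possibly many steps and with the spaces $\tilde{Z}$ no longer smooth but only $V$-varieties—depends only on the topological type of $(C_d,P)$ and not on analytic choices. The inductive step must confirm that the weighted multiplicity $m_i^P$ at each stage, which feeds into the next dimension-$3$ weight, is a topological invariant of the successive strict transforms, and that the quotient singularity types introduced at the intersection points are likewise combinatorially fixed. Once the local resolution data are shown to be topological invariants, the global statement follows by patching, since the local pieces are glued along the combinatorially determined configuration of $C_d$ inside $E_1\cong\mathbb{P}^2$.
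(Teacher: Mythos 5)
Your proposal is correct and takes essentially the same route as the paper: the paper states this proposition as a direct consequence of the preceding sketch --- reduction to independent local problems at each $P\in\singArtal(C_d)$ where $\hat{V}$ has equation $z^k+f_{d,P}=0$, lifting each step of the two-dimensional minimal embedded $\mathbb{Q}$-resolution of $(C_d,P)$ to a $(kp,kq,m_i^P)$-weighted blow-up in dimension three, with all weights and multiplicities determined by $k$ and the topological type of $(C_d,P)$, hence by the combinatorics --- deferring the technical details of iterating on the non-smooth spaces to~\cite{jmm:14}. Your elaboration of the multiplicity bookkeeping and the inductive structure is exactly the argument the paper leaves implicit.
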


The next step is to compare the divisors of the $\mathbb{Q}$-resolution and its semistable normalization for both $(V,0)$ and $(C_d,\singArtal C_d)$. There is a special irreducible component in the divisor of the semistable normalization, namely $D_1$ which is a $d$-cyclic cover of~$E_1$ ramified along the intersection
with the other irreducible components of the resolution. As stated in the previous proposition,
the rest of the components are completely determined by the combinatorics. We can summarize this discussion.

\begin{theorem}\label{thm:non_comb}
All the terms in \eqref{eq:sss2} (including its action by the monodromy) are determined by the combinatorics except the terms $H^j(D_1)$, factor of $H^j(D^{(0)})$, for $j=1,2,3$.
\end{theorem}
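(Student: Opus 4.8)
The plan is to use the preceding proposition — that every feature of the $\mathbb{Q}$-resolution process \emph{after} the first blow-up, including the divisorial multiplicities, is determined by the combinatorics of $C_d$ — and then to check that the only part of \eqref{eq:sss2} escaping its reach is the bulk cohomology of the component $D_1$ covering the first exceptional divisor $E_1\cong\mathbb{P}^2$. First I would describe the components of the semistable divisor $D=(g\circ\nu)^{-1}(E)$: the distinguished one $D_1$, a $d$-cyclic cover of $E_1$ ramified along $C_d$, and the covers $D_i$, $i\geq 2$, of the exceptional divisors $E_i$ created while resolving the suspension germs $z^k-f_{d,P}=0$ over the points $P\in\singArtal(C_d)$. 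By the preceding proposition these $E_i$ and their incidences are combinatorial, and, the multiplicities $N_j$ being combinatorial as well, the proposition on $\nu_J$ shows that each $\nu_J$ has combinatorially prescribed sheet number and ramification; hence every $D_i$ with $i\geq 2$, and the $\Phi$-action on $H^*(D_i)$, is determined by the combinatorics.

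Next I would treat the intersection strata entering \eqref{eq:sss2}. The pieces $D^{(1)},D^{(2)},D_0^{(1)},D_0^{(2)}$ are disjoint unions of curves and points; in particular the intersection $E_1\cap\hat V=C_d$ contributes to $D^{(1)}$ a cyclic cover of $C_d$. Being of dimension $\leq 1$, the cohomology of each such stratum is governed by its number of components and, for the curve parts, by their genera; the genera are recovered from the combinatorics by the genus lemma, and the covering data by the $N_j$. Thus all of $H^*(D^{(\ell)})$ and $H^*(D_0^{(\ell)})$ for $\ell\geq 1$, together with the $D_i$-summands ($i\geq 2$) of the $\ell=0$ term, are combinatorial, monodromy included.

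What remains is the summand $H^j(D_1)$ of the $\ell=0$ term $H^j(D^{(0)})$. I would observe that $H^0(D_1)=H^4(D_1)=\mathbb{Q}$ are fixed by connectedness and orientation, hence combinatorial, while for $j=1,2,3$ the groups $H^j(D_1)$ are exactly the cohomology of the cyclic cover of Construction~\ref{cnt:cyclic_cover}, carrying the $\eta$-action whose characteristic polynomial is the Alexander polynomial of Definition~\ref{dfn:alexander} (and whose Hodge numbers encode the geometric genus of a smooth model). Since the Alexander polynomial is not a combinatorial invariant, these groups — and the monodromy on them — cannot be read off $\Gamma_{C_d}$, so they are the asserted exceptions. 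The hard part will be to confirm that the non-combinatorial content is localized precisely here: one has to verify that every other $\mathcal{E}^1$-entry and every differential $\delta_{p,q}$ factors through the combinatorial data collected above, so that no hidden non-combinatorial contribution leaks into the remaining terms.
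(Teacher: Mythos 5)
Your proposal is correct and follows essentially the same route as the paper, which states this theorem precisely as a summary of the immediately preceding discussion: the proposition that the topology of the resolution process after the first blow-up (multiplicities included) is combinatorial, combined with the identification of $D_1$ as the $d$-cyclic cover of $E_1$ ramified along the remaining components, while all other strata are covers with combinatorially prescribed base, sheet number and ramification --- exactly the data you assemble, with somewhat more detail on the curve and point strata. One minor slip, harmless here because the theorem only excludes the terms $H^j(D_1)$ rather than asserting they are genuinely non-combinatorial: $H^j(D_1)$ agrees with $H^j(X_d)$ of Construction~\ref{cnt:cyclic_cover} only for $j=1,3$ (birational invariance of $H^1$ plus Poincar\'e duality), whereas $H^2(D_1)\neq H^2(X_d)$ in general since $D_1$ is only a birational model of $X_d$.
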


Let us look closer at the $V$-variety $D_1$ and its cohomology, together with the monodromy action.
The first important remark is that $V$-varieties have also Poincaré duality over~$\mathbb{Q}$. Recall
that $E_1$ has been obtained by several blow-ups of the original exceptional component
of the first blowing-up~$\sigma$ which was also denoted by $E_1$. Since it was naturally isomorphic
to the complex projective plane, and in order to avoid confusion, it is identified with~$\mathbb{P}^2$. The following commutative diagram holds:
\begin{equation}
\label{eq:cyclic_cover}
\begin{tikzcd}
D_1\rar["\tilde{\sigma}_D"]\dar["\tau_D"]&X_d\dar["\tau"]\\
E_1\rar["\tilde{\sigma}"]&\mathbb{P}^2
\end{tikzcd}
\end{equation}
where 
\begin{enumerate}[label=(CD\arabic{enumi})]
\item $\tau$ is the $d$-cyclic cover of $\mathbb{P}^2$ ramified along~$C_d$, see Construction~\ref{cnt:cyclic_cover};

\item $\tilde{\sigma}$ is the restriction to $E_1$ of the composition of all the weighted blowing-ups
of the $\mathbb{Q}$-resolution of $V$, except the blowing-up of the origin. It coincides with the minimal embedded
$\mathbb{Q}$-resolution of $C_d$ in $\mathbb{P}^2$;

\item the diagram coincides with the pull-back of $\tau$ and $\tilde{\sigma}$.

\end{enumerate}

We can derive some conclusions from this diagram. The monodromy $\eta$ of $X_d$ defined in Construction~\ref{cnt:cyclic_cover} lifts to the monodromy of $\tau_D$ induced by the monodromy
$\Phi$ of $V$. We are going to break the computation of the monodromy of $\eta_D$ on cohomology
in several steps. Since $\eta_D^d$ is the identity, it is enough to compute the characteristic polynomials
$\Delta^j(t):=\det\left(\mathbf{1}-t h^{*,H^j(D_1;\mathbb{Q})}\right)$ and combine them with the zeta function
\[
\zeta_{\eta_D}(t)=\prod_{j=0}^4 (-1)^{j}\Delta^j(t).
\]

This is the first consequence of the diagram.

\begin{paso}
The $V$-variety $D_1$ is birational to $X_d$, actually it is a model with only cyclic quotient
singularities.
\end{paso}
Since $X_d$ is connected we obtain the following step.

\begin{paso}
The $V$-variety $D_1$ is connected, i.e. $\dim H^0(D_1)=\dim H^4(D_1)=1$ and 
\[
\Delta^0(t)=\Delta^4(t)=1-t.
\]
\end{paso}

\begin{paso}
The $\zeta$ function $\zeta_{\eta_D}$ is combinatorial, so it is also the case for $\chi(D_1)$.
\end{paso}

\begin{proof}
Since $\tilde{\sigma}^{-1}(C_d)$ is a $\mathbb{Q}$-normal crossing divisor, $E_1$ admits a stratification
where over each stratum~$D$ the map $\tau_D$ is an unramified cyclic cover
(over each stratum the number of sheets is a divisor of $d$). 
Hence, there is a formula for $\chi(D_1)$ which is purely combinatorial. Actually,
a combinatorial formula also exists for~$\zeta_{\eta_D}$.
\end{proof}

\begin{paso}\label{paso:alexander}
The polynomial $\Delta^1$ is the Alexander polynomial $\Delta_{C_d}$ of $C_d$.
\end{paso}
\begin{proof}
Hodge theory is also valid on projective $V$-varieties; in particular 
$H^1(X_d;\mathbb{C})\cong H^1(X_d;\mathscr{O}_{X_d})\oplus H^0(X_d;\Omega^1_{X_d})$
and $\overline{H^1(X_d;\mathscr{O}_{X_d})}\cong H^0(X_d;\Omega^1_{X_d})$. Hence,
$H^1(X_d;\mathbb{C})$ is a birational
invariant and $\Delta^1$ is also the characteristic polynomial of $\eta$ on $H^1(X_d;\mathbb{Q})$,
which is, by Definition~\ref{dfn:alexander}, the Alexander polynomial of $C_d$.
\end{proof}

\begin{paso}
The combinatorics and the Alexander polynomial of $C_d$
completely determines the characteristic polynomials of $\eta_D$ on cohomology.
\end{paso}

\begin{proof}
Using Poincaré duality we have that $\Delta^1(t)=\Delta^3(t)$. Hence,
\[
\frac{\Delta^2(t)}{\Delta^1(t)^2}=\frac{\zeta_{\eta_D}(t)}{(1-t)^2}.
\]
The right hand-side is combinatorial, so the it is enough to compute $\Delta^1(t)=\Delta_{C_d}$.
\end{proof}

We end this section applying the previous discussion to the determination of the Jordan form
of the monodromy of $\Phi$ for a $k$-LYS. 
Let us start with the blocks of maximal size, i.e.~$3$
(only eigenvalues different from~$1$). The details can be found in~\cite[Théorème~4.3.1]{ea:mams}
and they follow from Corollaries~\ref{cor:Jordan1} and~\ref{cor:Jordan2}.

\begin{proposition}
For a SIS the structure of Jordan blocks is a combinatorial invariant, given $m\gg 1$
\[
\Delta^{[2]}(t)=\gcd\left((t-1)^m,\prod_{P\in\singArtal C_d}\Delta_P^{[1]}(t)\right).
\]
\end{proposition}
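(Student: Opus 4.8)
The plan is to compute $\Delta^{[2]}$, the characteristic polynomial governing the Jordan blocks of maximal size~$3$ (for eigenvalues $\neq 1$), by feeding the structure of the divisor $D$ of the semistable normalization into the formula from Corollary~\ref{cor:Jordan2}(ii),
\[
\Delta^{[2]}(t)=\frac{\Delta_{\Phi,H^0(D^{(2)})}(t)\,\Delta_{\Phi,H^0(D^{(0)})}(t)}{(t-1)\,\Delta_{\Phi,H^0(D^{(1)})}(t)}.
\]
The key observation is that the size-$3$ blocks are detected entirely by the action of $\Phi$ on the \emph{zero-dimensional} cohomology of the strata $D^{(\ell)}$, i.e.\ by how the monodromy permutes the connected components of the normalizations of the triple, double, and single intersections of the total transform. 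By Theorem~\ref{thm:non_comb}, every term in~\eqref{eq:sss2} except the cohomologies $H^j(D_1)$ is purely combinatorial; but $H^0$-terms count connected components, and the only potentially non-combinatorial piece $D_1$ is connected (see the Step asserting $\dim H^0(D_1)=1$). Hence all three $H^0$-terms above are combinatorial, and the quotient is a combinatorial invariant.

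First I would localize the contribution of each $P\in\singArtal C_d$. The divisor $D$ away from these points comes from the $d$-cyclic cover of the smooth part of $C_d\subset E_1$ together with the transverse intersection with $\hat V$, and its combinatorics is fixed; the interesting triple points and the relevant permutation action of $\Phi$ are concentrated over the singular points of the tangent cone. There the local picture is exactly the semistable normalization of the $k$-suspension $z^k-f_{d,P}=0$, whose dual complex and monodromy action are governed by the embedded ($\mathbb{Q}$-)resolution of $(C_d,P)$, hence by the local topological type of $C_d$ at $P$ together with $k$. Thus the global count of size-$3$ blocks factors as a product of local contributions indexed by $P\in\singArtal C_d$, each local contribution being precisely the number of size-$2$ Jordan blocks of the \emph{plane-curve} monodromy $\Phi_P$, which is recorded by $\Delta_P^{[1]}$ via the $n=1$ case (Corollary~\ref{cor:Jordan1}(iii)).

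Next I would match this local-to-global count against the asserted formula. The eigenvalues that survive in $\Delta^{[2]}$ must be roots of unity arising from the covering, and the Monodromy Theorem (Theorem~\ref{thm:mon}) forces $\Delta^{[2]}$ to be a product of factors $(t-\zeta)$ with $\zeta$ a root of unity; restricting to the unipotent-after-twist part and taking $m\gg 1$ so that $(t-1)^m$ absorbs every relevant multiplicity, the size-$3$ blocks of $\Phi$ are exactly those carried simultaneously by a size-$2$ block of some $\Phi_P$. This explains the $\gcd$ with $(t-1)^m$ and the product over $\singArtal C_d$: each $P$ contributes its $\Delta_P^{[1]}$, and only the common (jointly realized) multiplicities across the cover produce a genuine maximal-size block. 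I would verify the exponents by the Euler-class computation in Proposition~\ref{prop:abstract_inter} and the self-intersection shift $e_i-\tilde e_i=d_i(d+1)$ recorded before Proposition~3.1, which pin down the covering degrees and therefore the precise eigenvalue multiplicities appearing in each $H^0$-term.

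The hard part will be the bookkeeping of the cyclic-cover monodromy on the $H^0$-terms: one must track how the global $d$-cyclic structure of $D_1\to E_1$ interacts with the \emph{local} cyclic structure of each $k$-suspension resolution over $P$, so that the permutation representation of $\Phi$ on $\pi_0(D^{(\ell)})$ is correctly decomposed into eigenspaces. Concretely, the delicate point is showing that the local size-$2$ data $\Delta_P^{[1]}(t)$ assemble into the global $\Delta^{[2]}(t)$ with no cross-terms between distinct singular points and no spurious contribution from the smooth part of $C_d$; this is where the connectedness of $D_1$ and the vanishing of the off-diagonal differentials $\delta_{*,*}$ in~\eqref{eq:sss2} (their surjectivity/injectivity as in Corollary~\ref{cor:Jordan2}(iv)) must be invoked to guarantee that $\mathcal{E}^2$ isolates exactly the $H^0$-level contributions. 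Once that separation is established, the formula follows directly by substituting the local computations into the Corollary~\ref{cor:Jordan2}(ii) expression and simplifying.
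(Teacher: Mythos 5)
Your overall route is in fact the paper's own: the paper proves this proposition only in outline, by combining Corollary~\ref{cor:Jordan2} (the $H^0$-level formula for $\Delta^{[2]}$), Theorem~\ref{thm:non_comb} together with the connectedness of $D_1$ (so that all three $H^0$-terms are combinatorial), and Corollary~\ref{cor:Jordan1} for the local curve data, referring to \cite[Th\'eor\`eme~4.3.1]{ea:mams} for the actual computation; your first two steps reproduce this faithfully. The genuine gap is in your third step, which is where the real work happens. You claim the local contribution of each $P\in\singArtal C_d$ is ``precisely the number of size-$2$ Jordan blocks of $\Phi_P$'', so that your argument, if completed, would output $\Delta^{[2]}=\prod_P\Delta_P^{[1]}$ up to reparametrization, and you then try to account for the $\gcd$ as bookkeeping that ``absorbs every relevant multiplicity''. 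That is not what the $\gcd$ does: it is an eigenvalue \emph{selection}. Not every size-$2$ block of a local monodromy $\Phi_P$ produces a size-$3$ block of $\Phi$; only those whose eigenvalue satisfies an arithmetic condition forced by the cyclic covering structure (the divisor $E_1$ has multiplicity $d$ and $\hat{V}$ has multiplicity $1$, so the condition is tied to $d+1$, exactly as the local data enter \eqref{eq:char-sis} through $t^{d+1}$). In terms of Corollary~\ref{cor:Jordan2}, the spaces $H^0(D^{(\ell)})$ are permutation modules for the deck transformation, and over each $P$ the local dual complex of $D$ is built from cyclic covers of cones over the semistable dual graph of $(C_d,P)$; a $1$-cycle of that graph (which is what $\Delta_P^{[1]}$ records, by Corollary~\ref{cor:Jordan1}) closes up into a $2$-cycle of the global dual complex only when the covering condition on its eigenvalue holds. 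This lifting criterion is the substance of \cite[Th\'eor\`eme~4.3.1]{ea:mams}, and it is absent from your proposal.

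There is a concrete symptom you should have caught. By the Monodromy Theorem (Theorem~\ref{thm:mon} with $n=1$), $1$ is never a root of $\Delta_P^{[1]}$, so $\prod_P\Delta_P^{[1]}(t)$ is coprime to $t-1$ and the right-hand side of the displayed formula, read literally with the paper's definitions, is identically~$1$; meanwhile the remark following the proposition presupposes that size-$3$ blocks do occur. The statement only acquires content after the evident correction of the first argument of the $\gcd$ (to $(t^{d+1}-1)^m$, say), and the corrected statement is exactly the selection just described: $\Delta^{[2]}$ collects those size-$2$ eigenvalues of the $\Phi_P$ which are $(d+1)$-st roots of unity, with their multiplicities. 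An argument whose output is all of $\prod_P\Delta_P^{[1]}$ cannot be reconciled with either reading. Two smaller inaccuracies point the same way: for a SIS ($k=1$) the relevant local object at $P$ is the total transform germ $z^d\bigl(z+f_{d,P}\bigr)$, with the multiplicity-$d$ exceptional component included --- the $1$-suspension $z-f_{d,P}=0$ you invoke is smooth, so its semistable normalization carries no information; and the covering degrees in the semistable normalization are governed by the multiplicities $N_j$ (through their gcd's), not by the Euler classes of Proposition~\ref{prop:abstract_inter}, which play no role in this count.
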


The same result holds for $k$-LYS
as it can be deduced from~\cite{jmm:14, jmm:16}. Note also that a necessary condition for the
existence of Jordan blocks of size~$3$ is that there are singularities in $C_d$ with Jordan blocks of
size~$2$. 

Let us consider now the Jordan blocks of size~$2$. 
\begin{theorem}
Let $V$ a $k$-LYS and let $\Delta_{C_d}(t)$ be the Alexander polynomial of the tangent cone.
\[
\Delta^{[1]}(t)=\frac{\Delta^{\cmbArtal,k}(t)}{\Delta_{C_d}(t)},
\]
where $\Delta^{\cmbArtal,k}(t)$ is a polynomial which depends only on the combinatorics and~$k$.
\end{theorem}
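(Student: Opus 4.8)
The plan is to read off $\Delta^{[1]}$ from the size-$2$ block formula of Corollary~\ref{cor:Jordan2}\,(iv) and to separate its combinatorial ingredients from the single genuinely non-combinatorial one. Recall that for the monodromy $\Phi$ of a surface singularity that formula reads
\[
\Delta^{[1]}(t)=\frac{(t-1)^{\dim E^2_{4,-2}}\,\Delta_{\Phi,H^1(D^{(1)})}(t)}{(t-1)^{\dim E^2_{0,2}}\,\Delta_{\Phi,H^1(D^{(0)})}(t)},
\]
where all the data live in the Steenbrink spectral sequence~\eqref{eq:sss2} attached to the embedded $\mathbb{Q}$-resolution of the $k$-LYS constructed in~\S\ref{sec:qres}. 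The division is exact because the left-hand side is an honest characteristic polynomial, so it suffices to track which factors on the right are combinatorial.

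First I would apply Theorem~\ref{thm:non_comb}: every entry of~\eqref{eq:sss2}, together with its $\Phi$-action, is determined by the combinatorics of $C_d$ and by $k$, with the sole exception of the summands $H^j(D_1)$ inside $H^j(D^{(0)})$ for $j=1,2,3$. Consequently the corner dimensions $\dim E^2_{4,-2}$ and $\dim E^2_{0,2}$ (arising from $H^0$ of the triple-intersection strata $D^{(2)}$ and $D_0^{(2)}$, finite sets of points) are combinatorial, and the factor $\Delta_{\Phi,H^1(D^{(1)})}(t)$ is combinatorial as well, since $D^{(1)}$ is a union of double-intersection curves none of which is the component $D_1$. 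Thus the whole numerator of the displayed ratio depends only on the combinatorics and on $k$.

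It then remains to isolate $D_1$ in the denominator. Writing $D^{(0)}=D_1\sqcup D_1'$, where $D_1'$ collects the remaining components of the exceptional divisor, the $\Phi$-invariance of $D_1$ gives $H^1(D^{(0)})=H^1(D_1)\oplus H^1(D_1')$ and hence
\[
\Delta_{\Phi,H^1(D^{(0)})}(t)=\Delta_{\Phi,H^1(D_1)}(t)\cdot\Delta_{\Phi,H^1(D_1')}(t).
\]
By Step~\ref{paso:alexander} the first factor is exactly the Alexander polynomial $\Delta_{C_d}(t)$, while $\Delta_{\Phi,H^1(D_1')}(t)$ is combinatorial by Theorem~\ref{thm:non_comb}. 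Collecting the combinatorial pieces into
\[
\Delta^{\cmbArtal,k}(t):=\frac{(t-1)^{\dim E^2_{4,-2}}\,\Delta_{\Phi,H^1(D^{(1)})}(t)}{(t-1)^{\dim E^2_{0,2}}\,\Delta_{\Phi,H^1(D_1')}(t)}
\]
yields $\Delta^{[1]}(t)=\Delta^{\cmbArtal,k}(t)/\Delta_{C_d}(t)$, as claimed.

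The main obstacle I anticipate is justifying rigorously that $D_1$ really enters only through the summand $H^1(D^{(0)})$. One must check that the double-intersection curves making up $D^{(1)}$ (several of which are cyclic covers of the curves $E_1\cap E_j$ and thus sit over $D_1$) have combinatorial cohomology with combinatorial $\Phi$-action, being branched over configurations read off from the $\mathbb{Q}$-resolution of $C_d$; this is precisely what Theorem~\ref{thm:non_comb} packages, but confirming it for the size-$2$ blocks requires tracing the restriction and Gysin maps of~\eqref{eq:sss2} carefully, so that the passage from $E^1$ to $E^2$ does not reintroduce the non-combinatorial term $H^1(D_1)$ into positions $(4,-2)$ or $(0,2)$.
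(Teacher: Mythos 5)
Your proof is correct and follows essentially the same route as the paper, whose own justification (given in the remark after the theorem) likewise starts from Corollary~\ref{cor:Jordan2}(iv), splits the denominator $H^1(D^{(0)})$ into the $D_1$-part and the rest, identifies $\Delta_{\Phi,H^1(D_1)}$ with $\Delta_{C_d}$ via Step~\ref{paso:alexander}, and declares all remaining factors combinatorial. The only difference is cosmetic: where you invoke Theorem~\ref{thm:non_comb} as a package (including for the $(t-1)$-exponents and for $H^1(D^{(1)})$), the paper instead sketches the underlying computations (Riemann--Hurwitz and zeta functions for the curves in $D^{(1)}$, and birational models $D_j\to E_j$, $j>1$, for the other surface components) that make the combinatorial factor explicit.
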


\begin{remark}
A precise formula appears 
in~\cite[Théorème~5.4.2 and Corollaire~5.5.4]{ea:mams}. Let us recall that 
up to a power of $t-1$ (which is combinatorially determined), we know that 
$\Delta^{[1]}(t)$ equals
\[
\frac{\Delta_{\Phi,H^1(D^{(1)})}(t)}{\Delta_{\Phi,H^1(D^{(0)})}(t)}.
\]
It is not complicated to compute the numerator once one has the data from the embedded $\mathbb{Q}$-resolution.
The connected components of $D^{(1)}$ are compact Riemann surfaces and the corresponding 
characteristic polynomials using $\zeta$ functions and Riemann-Hurwitz theory.

For the denominator, there are two kind of components. On one side we have $D_1$, and we have seen
in Step~\ref{paso:alexander} that its characteristic polynomial is $\Delta_{C_d}(t)$.
To have the specific formula  we need to compute the characteristic polynomials
for the other components.

We will not enter in the details but the key point for SIS is the following. Using 
\cite[Proposition~5.3.2]{ea:mams} we find that the each the maps $D_j\to E_j$, $j>1$
are birationally equivalent to maps $T\to\mathbb{P}^1\times\mathbb{P}^1$ which are ramified only along 
vertical and horizontal factors. Since on $H^1$ all these invariants do not depend on the birational
model we can compute the characteristic polynomials in a relatively easy way. 

There is a parallel work for $k$-LYS in~\cite{jmm:14} but in this case $\mathbb{P}^1\times\mathbb{P}^1$ is replaced
by a special ruled surface with quotient singularities; the ramification locus is composed by 
fibers and pairwise disjoint multi-sections. The computation of these characteristic polynomials
is more involved and it is the goal of~\cite{acm:24}. A concrete formula is expected in a forthcoming
paper of J. Mart{\'i}n-Morales.
\end{remark}

\begin{theorem}\label{thm:counter_yau}
There are singularities of hypersurface having the same link, the same characteristic polynomial
of the monodromy but which do not have the same embedded topology, i.e., Conjecture{\rm~\ref{cjt:yau}} does not hold.
\end{theorem}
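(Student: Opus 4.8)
The plan is to realize a Zariski pair of projective plane curves with distinct Alexander polynomials as the tangent cones of two superisolated singularities, and then to separate those singularities by the Jordan form of their monodromies while keeping the link and the characteristic polynomial identical. In this way the two SIS satisfy the hypotheses of Conjecture~\ref{cjt:yau} but violate its conclusion.

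First I would take Zariski's classical sextics $C'$ and $C''$, each with six ordinary cusps, where the cusps of $C'$ lie on a conic and those of $C''$ do not. As recalled above, these curves have the same combinatorics, while their Alexander polynomials are $\Delta_{C'}(t)=t^2-t+1$ and $\Delta_{C''}(t)=1$, so they form an Alexander--Zariski pair. Choosing projective coordinates so that all six cusps avoid the line $z=0$, I would set $V'=\{f'_6(x,y,z)+z^7=0\}$ and $V''=\{f''_6(x,y,z)+z^7=0\}$, the normal form of Remark~\ref{rem:accion} with $d=6$, $k=1$. Since $z^7$ does not vanish on $\singArtal C_6$, the criterion of Proposition~\ref{prop:sis_char} holds and both germs are genuine SIS with tangent cones $C'$ and $C''$.

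Next I would check the two hypotheses of Yau's conjecture. Because $C'$ and $C''$ share the same combinatorics, the description of the link of a $k$-LYS shows that the links $K'$ and $K''$ are homeomorphic, and the corollary to Proposition~\ref{prop:char} shows that the characteristic polynomials of the two monodromies coincide, $\Delta'=\Delta''$. The heart of the argument is then the finer invariant. By the theorem computing $\Delta^{[1]}$ for a $k$-LYS one has
\[
\Delta^{[1]}(t)=\frac{\Delta^{\cmbArtal,1}(t)}{\Delta_{C_d}(t)},
\]
where the numerator depends only on the shared combinatorics. Hence the two polynomials $\Delta^{[1]}$ differ exactly by the factor $t^2-t+1$, so $V'$ and $V''$ have different numbers of size-$2$ Jordan blocks (those carrying the primitive sixth roots of unity). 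Since the Jordan form of $\Phi$ is a conjugacy invariant of the monodromy acting on $H^2(\Sigma)$, and the Milnor fibration — hence the conjugacy class of $\Phi$ — is an invariant of the embedded topological type, a germ homeomorphism $\Psi\colon(\mathbb{C}^3,0)\to(\mathbb{C}^3,0)$ with $\Psi(V')=V''$ would force the two monodromies to be conjugate and thus to share their Jordan form, a contradiction. Therefore no such $\Psi$ exists and Conjecture~\ref{cjt:yau} fails.

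The step I expect to be the main obstacle is making the final implication airtight: one must argue that $\Delta^{[1]}$ is genuinely recovered from the embedded topological type, so that comparing it across $V'$ and $V''$ is legitimate, and that the difference in the tangent-cone Alexander polynomials truly survives as a difference in Jordan structure rather than being absorbed into the combinatorial numerator. Both points are secured by the preceding results: the numerator $\Delta^{\cmbArtal,1}$ is combinatorial and hence identical for $C'$ and $C''$, so the quotient faithfully records $\Delta_{C_d}$, while Steenbrink's theory through Corollaries~\ref{cor:Jordan1} and~\ref{cor:Jordan2} identifies $\Delta^{[1]}$ with honest Jordan-block data of the topologically defined monodromy.
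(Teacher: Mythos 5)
Your proposal is correct and follows essentially the same route as the paper's proof: take two SIS whose tangent cones form an Alexander--Zariski pair, observe that the link and the characteristic polynomial are combinatorial invariants and hence coincide, and then separate the embedded topologies via the Jordan form using the formula $\Delta^{[1]}(t)=\Delta^{\cmbArtal,k}(t)/\Delta_{C_d}(t)$. Your version merely instantiates concretely (Zariski's sextics, the normal form $f_6+z^7$ with cusps off $z=0$) and spells out the final step (a germ homeomorphism forces conjugate monodromies) which the paper leaves implicit, citing \cite{ea:jag} only for existence of the pair.
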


\begin{proof}
It is enough to consider two SIS, or $k$-LYS with the same $k$, whose tangent cones
form an Alexander-Zariski pair, see~\cite{ea:jag} for the existence. Since the link and the characteristic polynomial
are combinatorial invariants they are the same. Since the Alexander polynomials
of the tangent cones differ, the structure of the Jordan form of the monodromy is different,
and then the embedded topology is different.
\end{proof}
 \section{Monodromy over \texorpdfstring{$\mathbb{Z}$}{Z}}\label{sec:z}

In the previous section, we have described the action of the monodromy of the Milnor fibration on the cohomology of the Milnor fiber
with coefficients in a field of characteristic~$0$, say $\mathbb{Q}$ or $\mathbb{C}$.
We have used an embedded resolution (or more precisely an embedded $\mathbb{Q}$-resolution) to obtain a model of the Milnor fiber and 
the geometric monodromy. With these ingredients and Steenbrink's and Milnor's theories, these computations have been achieved.

This method does not hold if we want to consider $\mathbb{Z}$ as ring of coefficients, much more finer ingredients are needed.
There is a theoretic classic method which involves morsifications, Picard-Lefschetz theory, and the intersection form of the Milnor fiber. There is an excellent exposition in~\cite[Part~I]{agv:88}.
Let us sketch this method. Let us start with a defining equation $F:(\mathbb{C}^{n+1},0)\to(\mathbb{C},0)$ of $V$ and let us consider a \emph{good} representative $F_{\varepsilon,\eta}:\overline{\mathbb{B}}_{\varepsilon,\eta}\to\overline{\mathbb{D}}^{2}_\eta$ as in~\eqref{eq:fibration},
which is a fibration outside the origin.

\begin{definition}
A \emph{morsification}\index{morsification} of $F$ is a holomorphic map $\mathfrak{F}$ defined on an open neighbourhood $\mathfrak{U}$ of  $\overline{\mathbb{B}}_{\varepsilon,\eta}\times\overline{\mathbb{D}}^{2}_r$, $r>0$, such that, if we define for $\lambda\in\overline{\mathbb{D}}^{2}_r$
\[
\overline{\mathbb{B}}_{\varepsilon,\eta,\lambda}:=\{\mathbf{p}\in\mathbb{C}^{n+1}\mid(\mathbf{p},\lambda)\in\mathfrak{U}, \abs{\mathfrak{F}(\mathbf{p},\lambda)}\leq\eta\},\quad
\begin{tikzcd}[row sep=0,/tikz/column 1/.append style={anchor=base east},/tikz/column 2/.append style={anchor=base west}]
\overline{\mathbb{B}}_{\varepsilon,\eta,\lambda}\rar["\mathfrak{F}"]&\overline{\mathbb{D}}^{2}_\eta\\
\mathbf{p}\rar[mapsto]&\mathfrak{F}(\mathbf{p},\lambda)
\end{tikzcd} 
,\text{ then}
\]
\begin{enumerate}[label=\rm(\alph{enumi})]
\item $F^0=F_{\varepsilon,\eta}$
\item If $\lambda\neq 0$ then $F^\lambda$ is a Morse function, i.e., all the singular points are non-degenerate and the singular values are pairwise distinct
(and in the open disk~$\mathbb{D}^2_\lambda$).
\end{enumerate}
\end{definition}

From classical Morse theory and standard compactness arguments we have the following statement.

\begin{proposition}
Morsifications of $F$ do exist. Moreover, after eventually restricting $r>0$:
\begin{enumerate}[label=\rm(\arabic{enumi})]
\item If $\lambda\neq 0$, then $F^\lambda$ has exactly $\mu$ singular points, where $\mu$ is the Milnor number of~$F$.

\item If $t\in\overline{\mathbb{D}}^{2}_\eta$ is a regular value of $F^\lambda$, then $(F^\lambda)^{-1}(t)$ is diffeomorphic to the Milnor fiber $\Sigma$ of $F$.

\item Each $F^\lambda$ defines a locally trivial fibration over $\overline{\mathbb{D}}^{2}_\eta\setminus\discArtal F^\lambda$ where $\discArtal$ stands for the set the critical values.

\item The restrictions of these fibrations to $\mathbb{S}^{1}_\eta$ are all equivalent.
\end{enumerate}
\end{proposition}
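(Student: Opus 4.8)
The plan is to establish existence by a standard genericity argument and to deduce the four numbered assertions from the identification of $\mu$ with the degree of the gradient map, together with Ehresmann's fibration theorem. The whole difficulty is to arrange a single choice of parameter-disk radius $r$ for which the perturbed functions are simultaneously Morse, proper on the good shell, and with critical values trapped inside $\overline{\mathbb{D}}^{2}_\eta$.

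For existence I would choose a generic linear form $\ell(\mathbf{p})=v_0x_0+\dots+v_nx_n$, with constant gradient $v=(v_0,\dots,v_n)$, and set $\mathfrak{F}(\mathbf{p},\lambda):=F(\mathbf{p})-\lambda\,\ell(\mathbf{p})$ for $\lambda\in\overline{\mathbb{D}}^{2}_r$; then $F^0=F_{\varepsilon,\eta}$ by construction, giving condition (a). Since $F^\lambda$ and $F$ differ by a linear form, they share the same Hessian, so the critical points of $F^\lambda$ are exactly the solutions of $\nabla F(\mathbf{p})=\lambda v$, i.e. the fibers of the gradient map $\nabla F$ over the ray $\{\lambda v\}$, and such a critical point is nondegenerate precisely when $\lambda v$ is a regular value of $\nabla F$. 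As the origin is an isolated singularity, $\nabla F$ is a finite holomorphic map near $0$ whose critical values form a proper analytic subset; the further locus of $\lambda$ for which two critical values of $F^\lambda$ collide is also proper. For a generic direction $v$ the ray $\{\lambda v\}$ avoids both loci for $0<\abs{\lambda}<r$ after shrinking $r$, which yields a Morse function with pairwise distinct critical values and hence condition (b). For the count in (1) I would then invoke Milnor's classical theorem \cite{Milnor:68}: the number of points in a generic fiber of $\nabla F$ near $0$ equals the local degree of $\nabla F$ at the origin, which is $\dim_{\mathbb{C}}\mathbb{C}\{x_0,\dots,x_n\}/J_F=\mu$, where $J_F$ is the Jacobian ideal. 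Each such point is a nondegenerate, hence multiplicity-one, critical point, so $F^\lambda$ has exactly $\mu$ of them.

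The topological assertions (2)--(4) rest on the observation that transversality of $V$ to $\mathbb{S}^{2n+1}_\varepsilon$ is an open condition: after shrinking $r$, every $F^\lambda$ with $\lambda\in\overline{\mathbb{D}}^{2}_r$ still meets $\mathbb{S}^{2n+1}_\varepsilon$ transversally and keeps all its critical values inside $\mathbb{D}^{2}_\eta$. Statement (3) is then Ehresmann's theorem applied to the good representative $F^\lambda$, which is proper and a submersion off $\discArtal F^\lambda$, hence a locally trivial fibration over $\overline{\mathbb{D}}^{2}_\eta\setminus\discArtal F^\lambda$. For (2) I would apply Ehresmann to the total map $\mathfrak{F}$ restricted to a segment of parameters joining $0$ to $\lambda$ along which a fixed regular value $t$ stays regular, obtaining $(F^\lambda)^{-1}(t)\cong(F^0)^{-1}(t)=\Sigma$. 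For (4), no critical value is ever crossed over the boundary $\mathbb{S}^1_\eta$, so the restriction of $\mathfrak{F}$ to this circle is a locally trivial family of circle-fibrations in the parameter $\lambda$, and the parametrized Ehresmann theorem furnishes the claimed fiber-preserving equivalence.

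I expect the genuinely delicate point to be the exact count in (1), which depends on the identification of $\mu$ with the local degree of the gradient map and on the fact that nondegenerate critical points are precisely the multiplicity-one fibers of $\nabla F$, combined with the simultaneous control of $\lambda$: generic enough to be Morse with distinct critical values, yet small enough that transversality to $\mathbb{S}^{2n+1}_\varepsilon$ persists and no critical value escapes $\mathbb{D}^{2}_\eta$. By contrast, the fibration statements (2)--(4) should be routine once this simultaneous smallness is secured, being direct applications of Ehresmann's theorem to $F^\lambda$ and to the family $\mathfrak{F}$.
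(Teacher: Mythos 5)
The paper offers no written proof of this proposition: it is stated as a consequence of ``classical Morse theory and standard compactness arguments,'' with~\cite{agv:88} as the background reference for the whole section. Your proposal is exactly that classical argument --- the linear perturbation $F^\lambda=F-\lambda\ell$, the identification of the critical points of $F^\lambda$ with the fiber of $\nabla F$ over $\lambda v$, the count via the local degree of $\nabla F$ being $\dim_{\mathbb{C}}\mathbb{C}\{x_0,\dots,x_n\}/J_F=\mu$, and Ehresmann's theorem (with transversality to $\mathbb{S}^{2n+1}_\varepsilon$ persisting by compactness, and critical values trapped in $\mathbb{D}^2_\eta$) for the fibration statements --- so in approach you match what the paper appeals to. Two points, however, need attention.

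First, a genuine gap in your proof of (2). You fix the regular value $t$ and move the parameter along a path from $0$ to $\lambda$ ``along which $t$ stays regular.'' This works when $t\neq 0$: the set of parameters $\lambda'$ with $t\in\discArtal F^{\lambda'}$ is a closed analytic subset of the parameter disk not containing $0$ (the only critical value of $F^0$ is $0$), hence discrete, and an avoiding path exists. But statement (2) also allows $t=0$, which can perfectly well be a regular value of $F^\lambda$ for $\lambda\neq 0$; then no admissible path exists, since $t=0$ is critical for $F^0$, and in any case $(F^0)^{-1}(0)$ is the singular fiber, not $\Sigma$. The standard repair uses the same tools one complex dimension up: the map $(\mathbf{p},\lambda')\mapsto(\mathfrak{F}(\mathbf{p},\lambda'),\lambda')$ is a proper submersion away from the discriminant curve $\mathcal{D}\subset\overline{\mathbb{D}}^{2}_\eta\times\overline{\mathbb{D}}^{2}_r$, whose complement is connected because $\mathcal{D}$ has real codimension~$2$; Ehresmann then identifies the fiber over $(t,\lambda)$ with the fiber over $(t_0,0)$ for any $t_0\neq 0$, i.e., with $\Sigma$. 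Second, a point you assert but do not justify: that for generic $v$ the critical values of $F^\lambda$ are pairwise distinct for all small $\lambda\neq 0$. This is not a Sard-type statement. One must check that on each component of $Z=\{(p,q)\mid p\neq q,\ \nabla F(p)=\nabla F(q)\}$ (each of dimension $n+1$, finite over both factors) the collision equation $F(p)-\langle\nabla F(p),p\rangle=F(q)-\langle\nabla F(q),q\rangle$ is not identically satisfied: differentiating it along such a component, using $H_F(p)\,dp=H_F(q)\,dq$, gives $\langle H_F(p)\,dp,\,p-q\rangle=0$ for all tangent vectors, and since $dp$ sweeps out $\mathbb{C}^{n+1}$ generically and $H_F(p)$ is generically invertible, this forces $p=q$, a contradiction. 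Hence the collision locus is a proper analytic germ in the $w$-space, and a generic line through the origin meets it only at $0$. With these two patches your proof is complete.
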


This proposition shows that we can compute the monodromy of $F$ by computing the monodromy of $F^\lambda$ over $\overline{\mathbb{D}}^{2}_\eta\setminus\discArtal F^\lambda$
and then restrict its result to  $\mathbb{S}^{1}_\eta$. This is where Picard-Lefschetz theory enters, see~\cite[Lemmas~1.3 and~1.4]{agv:88}.

\begin{proposition}\label{prop:pl}
Let $V$ be a non-degenerate singularity (i.e., after a change of coordinates, $F=x_0^2+\dots+x_n^2$). Then, $\Sigma$ is diffeomorphic to a closed regular neighbourhood
of the zero-section of the tangent vector bundle over $\mathbb{S}^n$. The image by $\sigma$ of the zero section in $\Sigma$ is called a \emph{vanishing cycle} and 
$(\sigma\cdot\sigma)^2_\Sigma=(-1)^{\frac{n(n-1)}{2}}(1+(-1)^n)$.
\end{proposition}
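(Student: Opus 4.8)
The plan is to make Milnor's explicit diffeomorphism visible and then reduce the self-intersection to an Euler-number computation, the only delicate point being a comparison of orientations. First I would solve the equation explicitly. Writing $z_j=x_j+i y_j$ and choosing $\eta>0$ real (which we may arrange after rotating the target), the equation $F=z_0^2+\dots+z_n^2=\eta$ becomes the pair of real equations $\|x\|^2-\|y\|^2=\eta$ and $\langle x,y\rangle=0$, where $x=(x_0,\dots,x_n)$ and $y=(y_0,\dots,y_n)$. The map $(x,y)\mapsto(x/\|x\|,y)$ is then a diffeomorphism from $F^{-1}(\eta)$ onto the total space of $T\mathbb{S}^n=\{(u,v)\mid\|u\|=1,\ \langle u,v\rangle=0\}$, with inverse $(u,v)\mapsto(\sqrt{\eta+\|v\|^2}\,u,v)$. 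Intersecting with $\overline{\mathbb{B}}^{2n+2}_{\varepsilon}$ imposes $\|y\|^2\leq(\varepsilon^2-\eta)/2$ (using $\|x\|^2=\eta+\|y\|^2$), so $\Sigma$ is carried onto the closed disk subbundle $\|v\|^2\leq(\varepsilon^2-\eta)/2$, a closed regular neighbourhood of the zero section. This proves the first assertion, and under this identification the vanishing cycle is exactly the sphere of real solutions $y=0$, $\|x\|^2=\eta$.

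For the self-intersection I would use that the normal bundle $\nu$ of the zero section $S=\mathbb{S}^n$ in $\Sigma$ is canonically the tangent bundle $T\mathbb{S}^n$, so that $(\sigma\cdot\sigma)_\Sigma=\langle e(\nu),[S]\rangle$. The complex structure $J$ on $T_p\Sigma$ gives, at each $p\in S$, the real splitting $T_p\Sigma=T_pS\oplus J(T_pS)$ with $\nu_p=J(T_pS)$, and $J$ realizes a bundle isomorphism $TS\xrightarrow{\sim}\nu$. Hence $e(\nu)$ equals $e(TS)=\chi(\mathbb{S}^n)=1+(-1)^n$ as soon as the relevant orientations agree, so everything comes down to tracking one sign.

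That sign is the one genuinely delicate step, and I expect it to be the main obstacle. The Euler-number formula for a self-intersection requires the orientation of $\nu$ induced by $\mathrm{or}(S)\oplus\mathrm{or}(\nu)=\mathrm{or}(\Sigma)$, where $\mathrm{or}(\Sigma)$ is the \emph{complex} orientation. Fixing an oriented frame $e_1,\dots,e_n$ of $T_pS$, the complex orientation is represented by the interleaved frame $e_1,Je_1,\dots,e_n,Je_n$; reordering it to $e_1,\dots,e_n,Je_1,\dots,Je_n$ is the perfect shuffle whose sign is $(-1)^{n(n-1)/2}$. Thus the orientation on $\nu$ compatible with the complex orientation of $\Sigma$ differs by exactly $(-1)^{n(n-1)/2}$ from the one transported from the standard orientation of $TS$ via $J$. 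Carrying this sign through the Euler-number computation yields
\[
(\sigma\cdot\sigma)_\Sigma=(-1)^{\frac{n(n-1)}{2}}\chi(\mathbb{S}^n)=(-1)^{\frac{n(n-1)}{2}}\bigl(1+(-1)^n\bigr),
\]
as claimed.

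As a sanity check I would verify the shuffle sign against the case $n=2$: the formula gives self-intersection $-2$ for a vanishing $2$-sphere in a complex surface, which matches the $(-2)$-curve appearing in the $A_1$ situation $z_0^2+z_1^2+z_2^2$. The remaining ingredients—the normal-bundle-equals-tangent-bundle identity for a zero section and the value $\chi(\mathbb{S}^n)=1+(-1)^n$—are standard and require no further work.
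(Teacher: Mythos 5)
Correct: your argument --- the explicit diffeomorphism of $F^{-1}(\eta)\cap\overline{\mathbb{B}}^{2n+2}_{\varepsilon}$ onto the closed disk subbundle of $T\mathbb{S}^n$, followed by $(\sigma\cdot\sigma)_\Sigma=\langle e(\nu),[\mathbb{S}^n]\rangle$ with the perfect-shuffle sign $(-1)^{n(n-1)/2}$ comparing the complex orientation of $\Sigma$ with the orientation $\mathrm{or}(S)\oplus J_*\mathrm{or}(S)$ --- is sound, and your $n=2$ sanity check against the $A_1$ case confirms the sign convention. The paper gives no proof of this proposition, referring to \cite[Lemmas~1.3 and~1.4]{agv:88}, and the proof there is essentially the one you wrote, so you have reconstructed the cited argument rather than taken a different route.
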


Fix a value $\lambda$, $0<\abs{\lambda}\leq r$. Let $\discArtal F^\lambda=\{t_1,\dots,t_\mu\}$ and let $t_0\in\mathbb{S}^1_\lambda$. We can choose a small radius $\delta>0$
such that the closed disks of radius $\delta$ centered at $t_i$ are pairwise disjoint and contained in the open disk $\mathbb{D}^2_\eta$ as in Figure~\ref{fig:morse}.
Let us choose points $s_i\in\mathbb{S}^1_{\delta}(t_i)$. In the fibers $(F^\lambda)^{-1}(s_i)$ we find a vanishing cycle $\hat{\sigma}_{i}$ corresponding to the critical
point~$t_i$. Fix a system of simple paths $\gamma_i$ in $\overline{\mathbb{D}}^2_\eta\setminus\bigcup_{j=1}^\mu\mathbb{D}^2_{\delta}(t_i)$, whose only intersection is their common origin $t_0$, which end at $s_i$ and such that from top to bottom, near $t_0$
they are ordered as $\gamma_1,\dots,\gamma_\mu$, see Figure~\ref{fig:morse}.

\begin{figure}[ht]
\centering 
\begin{tikzpicture}
\draw[->] (2,0) arc [start angle=0,end angle=180,
radius=2];
\draw (-2,0) node[left] {$\alpha_0$}  arc [start angle=180,end angle=360,
radius=2];
\fill[gray](2,0) node[right, black] {$t_0$} circle [radius=.1];
\foreach \x in {1,...,4}
{
\coordinate (A\x) at (30+60*\x:1.2);
\coordinate (B\x) at ($(A\x)+(.3,0)$);
\fill (A\x) circle [radius=.1];
\draw (A\x) node[left=5pt] {$t_\x$} circle [radius=.3];
\draw (2,0) -- node[above,pos={.5+\x/20}] {$\gamma_\x$} (B\x) -- (A\x);
}
\fill[gray] (B1) node[right=2pt, black] {$s_1$} circle [radius=.1];
\fill[gray] (B2) node[below right, black] {$s_2$} circle [radius=.1];
\fill[gray] (B3) node[above right, black] {$s_3$} circle [radius=.1];
\fill[gray] (B4) node[right, black] {$s_4$} circle [radius=.1];

\end{tikzpicture}
 \caption{Paths from the base value to the critical values, $\mu=4$.}
\label{fig:morse}
\end{figure}
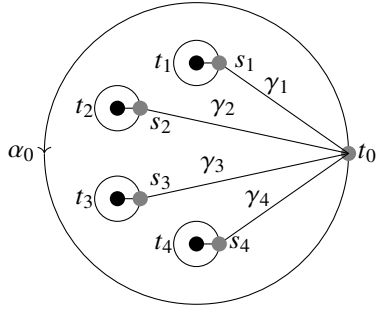

The fiber bundles over $\gamma_i$ are trivial, and then it is possible to transport $\hat{\sigma}_i\subset(F^\lambda)^{-1}(s_i)$ to 
$\sigma_i\subset(F^\lambda)^{-1}(t_0)=:\Sigma_\lambda$; recall that $\Sigma_\lambda$ is homeomorphic to $\Sigma$. Following 
\cite{agv:88} we have the following definition.

\begin{definition}
The tuple $(\sigma_1,\dots,\sigma_\mu)$ is a \emph{distinguished basis of vanishing cycles} of
the homology space $H_n(\Sigma_\lambda;\mathbb{Z})$.
\end{definition}

The name of basis is justified by the following result.

\begin{theorem}[{\cite[Theorem~2.1]{agv:88}}]
A distinguished basis of vanishing cycles is a basis of the abelian group $H_n(\Sigma_\lambda;\mathbb{Z})$.
\end{theorem}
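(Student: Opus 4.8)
The plan is to exploit that the total space of the morsification is contractible while passing each Morse critical value attaches a single top-dimensional cell along the corresponding vanishing cycle; the statement will then fall out of the long exact sequence of a pair, with Milnor's computation of $\tilde H_*(\Sigma)$ supplying the target.

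First I would set $X:=\overline{\mathbb{B}}_{\varepsilon,\eta,\lambda}=(F^\lambda)^{-1}(\overline{\mathbb{D}}^2_\eta)$ and recall that, being a good representative over a Milnor ball (and diffeomorphic to the unperturbed case $\lambda=0$), $X$ is contractible, so $\tilde H_*(X;\mathbb{Z})=0$. Next I would replace the base disk $\overline{\mathbb{D}}^2_\eta$ by its deformation retract onto the ``spider'' $T:=\bigcup_{i=1}^\mu\big(\gamma_i\cup\overline{\mathbb{D}}^2_\delta(t_i)\big)$ cut out by the chosen system of paths. Since $F^\lambda$ is a locally trivial fibration over $\overline{\mathbb{D}}^2_\eta\setminus\discArtal F^\lambda$, this retraction lifts to a deformation retraction of $X$ onto $(F^\lambda)^{-1}(T)$; and over the contractible union of legs $\bigcup_i\gamma_i$ the fibration is trivial, so $(F^\lambda)^{-1}(\bigcup_i\gamma_i)$ deformation retracts onto the reference fibre $\Sigma_\lambda$. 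This realizes $X$, up to homotopy, as $\Sigma_\lambda$ with the pieces over the small disks $\overline{\mathbb{D}}^2_\delta(t_i)$ glued along the fibres over the $s_i$ and transported back along the $\gamma_i$.

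The core local computation uses Proposition~\ref{prop:pl}. Over each $\overline{\mathbb{D}}^2_\delta(t_i)$ there is a single non-degenerate critical point, so in suitable coordinates $F^\lambda=x_0^2+\dots+x_n^2$: the local total space is a ball (contractible), the local regular fibre is a neighbourhood of the zero-section of $T\mathbb{S}^n$ (hence homotopy equivalent to the vanishing sphere $\hat{\sigma}_i$), and by excision
\[
H_*\big((F^\lambda)^{-1}(\overline{\mathbb{D}}^2_\delta(t_i)),\,(F^\lambda)^{-1}(s_i);\mathbb{Z}\big)
\]
is $\mathbb{Z}$ concentrated in degree $n+1$, its generator mapping under the connecting homomorphism to $\pm\hat{\sigma}_i$. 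Excising the legs and summing over $i$ yields $H_{n+1}(X,\Sigma_\lambda;\mathbb{Z})\cong\mathbb{Z}^\mu$, with generators $e_1,\dots,e_\mu$ satisfying $\partial[e_i]=\pm\sigma_i$ after transport along $\gamma_i$, and with all other relative groups vanishing. Feeding this into the long exact sequence of $(X,\Sigma_\lambda)$, and using that $\tilde H_*(X)=0$ while (by Milnor's theorem recalled in~\S\ref{sec:mon}) $\Sigma_\lambda$ is homotopy equivalent to a wedge of $\mu$ $n$-spheres, the only surviving portion is
\[
0\longrightarrow H_{n+1}(X,\Sigma_\lambda;\mathbb{Z})\xrightarrow{\ \partial\ }H_n(\Sigma_\lambda;\mathbb{Z})\longrightarrow 0,
\]
so $\partial$ is an isomorphism $\mathbb{Z}^\mu\xrightarrow{\sim}H_n(\Sigma_\lambda;\mathbb{Z})$ carrying the basis $\{e_i\}$ to $\{\pm\sigma_i\}$; hence $(\sigma_1,\dots,\sigma_\mu)$ is a $\mathbb{Z}$-basis.

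The main obstacle I expect is not the homological bookkeeping but the geometric input behind the second and third steps: lifting the base deformation retraction through the stratified map $F^\lambda$ in a fibre-preserving way over the complement of the critical points, and pinning down the local model so that $\partial[e_i]$ is exactly the transported vanishing cycle $\sigma_i$, orientation signs included. The ordering of the $\gamma_i$ near $t_0$ is irrelevant for the basis property (it only affects the finer intersection and Seifert data), so I would not need to track it here.
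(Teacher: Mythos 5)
Your proof is correct and is essentially the same argument as in the source the paper defers to: the paper gives no proof of its own, citing \cite[Theorem~2.1]{agv:88}, and the proof there is precisely this one --- the Lefschetz thimbles over the paths $\gamma_i$ generate $H_{n+1}\bigl(\overline{\mathbb{B}}_{\varepsilon,\eta,\lambda},\Sigma_\lambda;\mathbb{Z}\bigr)\cong\mathbb{Z}^\mu$ by excision and the local Morse model, and contractibility of the total space forces the boundary map onto $H_n(\Sigma_\lambda;\mathbb{Z})$ to be an isomorphism, sending the thimbles to the vanishing cycles up to sign. The technical points you flag (lifting the base retraction over the complement of the critical values, and the irrelevance of signs and of the ordering of the $\gamma_i$ for the basis property) are handled in the cited reference exactly as you indicate, so there is no gap.
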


In Proposition~\ref{prop:pl}, we pay attention to the intersection form in the Milnor fiber. Though this is a priori a weak invariant,
we are going to see that this is not really the case. Let us come back to the target of $F^\lambda$.
Note that
\[
\pi_1\left(\overline{\mathbb{D}}^2_\eta\setminus\bigcup_{j=1}^\mu\mathbb{D}^2_{\delta}(t_i); t_0\right)=
\langle\alpha_1,\dots,\alpha_\mu \mid \,\_\,\rangle
\]
where $\alpha_i:=\gamma_i\cdot\hat{\alpha}_i\cdot\gamma_i^{-1}$, and $\hat{\alpha}_i$ is the cycle based at $s_i$ that runs along 
$\mathbb{S}^1_{\delta}(t_i)$ counterclockwise. The previous choices guarantee that, up to homotopy,
$\alpha_0:=\alpha_1\cdot\ldots\cdot\alpha_\mu$ is the cycle based at $t_0$
that runs along 
$\mathbb{S}^1_{\eta}$ counterclockwise. 
The fact that $F^\lambda$ is a locally trivial fibration associates to each path $\alpha_i$
a monodromy diffeomorphism $\Phi_i:\Sigma_\lambda\to\Sigma_\lambda$ which induces
an automorphism $\Phi_{i,*}$ of $H_n(\Sigma_\lambda;\mathbb{Z})$.

\begin{proposition}[Picard-Lefschetz formula {\cite[Corollaries in page~26]{agv:88}}]
The monodromy $\Phi_0$ (on $\Sigma_\lambda$) is conjugate to the monodromy $\Phi$ on $\Sigma$.
Moreover, $\Phi_0=\Phi_{\mu,*}\circ\ldots\cdot\Phi_{1,*}$,
\[
\Phi_{i,*}(\sigma_j)=\sigma_j+(-1)^{\frac{(n+1)(n+2)}{2}}(\sigma_j\cdot\sigma_i)_{\Sigma_\Lambda}\sigma_i,
\]
and, hence, $\Phi_*$ can be computed in terms of the intersection form on a distinguished basis
of $H_n(\Sigma_\lambda,\mathbb{Z})$.
\end{proposition}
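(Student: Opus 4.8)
The plan is to split the statement into two essentially formal assertions---the conjugacy $\Phi_0\sim\Phi$ and the factorization $\Phi_0=\Phi_{\mu,*}\circ\cdots\circ\Phi_{1,*}$---and one genuinely computational ingredient, the local Picard-Lefschetz formula attached to a single non-degenerate critical point. The soft parts I would handle first, reserving the local computation for last.

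For the conjugacy, recall from the preceding proposition that the restriction to $\mathbb{S}^1_\eta$ of the fibration defined by $F^\lambda$ is equivalent to the Milnor fibration of $F$. Since the loop $\alpha_0=\alpha_1\cdots\alpha_\mu$ is, up to homotopy, the positively oriented generator of $\pi_1(\mathbb{S}^1_\eta;t_0)$, its monodromy $\Phi_0$ is the monodromy of that restricted fibration; transporting through the diffeomorphism $\Sigma_\lambda\cong\Sigma$ then identifies $\Phi_0$ with $\Phi$ up to conjugacy. The factorization is pure functoriality of monodromy: path transport furnishes a representation of $\pi_1\bigl(\overline{\mathbb{D}}^2_\eta\setminus\bigcup_j\mathbb{D}^2_\delta(t_j);t_0\bigr)$ on $H_n(\Sigma_\lambda;\mathbb{Z})$, so the homotopy $\alpha_0\simeq\alpha_1\cdots\alpha_\mu$ gives $\Phi_{0,*}=\Phi_{\mu,*}\circ\cdots\circ\Phi_{1,*}$, the reversal of order being the usual effect of the composition convention for transports along concatenated loops.

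It remains to compute each $\Phi_{i,*}$. Writing $\alpha_i=\gamma_i\cdot\hat\alpha_i\cdot\gamma_i^{-1}$, the transports along $\gamma_i$ and $\gamma_i^{-1}$ are mutually inverse isomorphisms carrying the locally defined vanishing cycle $\hat\sigma_i\subset(F^\lambda)^{-1}(s_i)$ to $\sigma_i\subset\Sigma_\lambda$, so the whole content lies in the local monodromy $\hat\alpha_i$ encircling the single critical value $t_i$. By the holomorphic Morse lemma I may assume $F^\lambda=t_i+x_0^2+\cdots+x_n^2$ near the $i$-th critical point, placing us in the standard model of Proposition~\ref{prop:pl}. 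Outside a tubular neighbourhood of $\hat\sigma_i$ the local fibration is trivial, so $\Phi_{i,*}$ acts as the identity there; this already forces $\Phi_{i,*}(\sigma_j)-\sigma_j$ to be an integer multiple of $\hat\sigma_i$ (equivalently $\sigma_i$), and only the coefficient remains.

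The heart of the proof, and the step I expect to be the main obstacle, is pinning down that coefficient in the model $\sum_k x_k^2=t$. Here I would realize the fiber as the smooth affine quadric, identify the vanishing cycle with the real sphere $\{x_k\in\mathbb{R}\}$, and follow the monodromy as $t$ runs around $0$ by tracking the Lefschetz thimble---the Lagrangian $(n+1)$-disk swept out by the vanishing cycle as $t$ moves from $t_i$ to $s_i$. A careful analysis of how the thimble winds exhibits $\hat\alpha_i$ as a generalized Dehn twist along $\hat\sigma_i$, whose homological action has the shape $x\mapsto x\pm(x\cdot\hat\sigma_i)\,\hat\sigma_i$; the precise sign $(-1)^{(n+1)(n+2)/2}$ then emerges from an orientation bookkeeping consistent with the self-intersection $(\hat\sigma_i\cdot\hat\sigma_i)$ recorded in Proposition~\ref{prop:pl}. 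Transporting back along $\gamma_i$ replaces $\hat\sigma_i$ by $\sigma_i$ and yields the stated formula. This sign and orientation analysis is the delicate part, and I would follow the detailed treatment in~\cite[Part~I]{agv:88}.
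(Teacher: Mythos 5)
The paper does not actually prove this proposition: it is recalled verbatim from Arnol'd--Guse{\u\i}n-Zade--Varchenko (the bracketed reference to the corollaries on page~26 of \cite{agv:88} is the whole of the paper's justification), so there is no internal argument for your proposal to be measured against. Your sketch is the standard proof of this classical result and its structure is sound. The two formal claims are handled correctly: the conjugacy $\Phi_0\sim\Phi$ follows from item (4) of the preceding proposition on morsifications together with the homotopy $\alpha_0\simeq\alpha_1\cdot\ldots\cdot\alpha_\mu$, and the factorization is indeed just the statement that path transport gives a representation of $\pi_1\bigl(\overline{\mathbb{D}}^2_\eta\setminus\bigcup_{j}\mathbb{D}^2_{\delta}(t_j);t_0\bigr)$ on $H_n(\Sigma_\lambda;\mathbb{Z})$, with the order reversal dictated by the composition convention. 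You also isolate the genuine content correctly: reduction by the holomorphic Morse lemma to the quadric model $\sum_k x_k^2=t$, the support argument showing that $\Phi_{i,*}(\sigma_j)-\sigma_j$ is a cycle carried by a tubular neighbourhood of the vanishing cycle (whose $H_n$ is infinite cyclic, generated by $[\sigma_i]$), and then the identification of the resulting integer coefficient. The one place where your write-up is thinner than a self-contained proof is exactly that last identification: knowing the difference is \emph{some} multiple of $\sigma_i$ does not yet show the multiple is $(\sigma_j\cdot\sigma_i)$ up to a universal sign; this needs the duality (or variation-operator) argument in the local model, showing the coefficient factors through the class of $\sigma_j$ in $H_n(T,\partial T)\cong\mathbb{Z}$ for $T$ the tubular neighbourhood, plus the orientation bookkeeping that fixes the sign $(-1)^{\frac{(n+1)(n+2)}{2}}$. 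You defer both to \cite[Part~I]{agv:88}, which is legitimate here, since that is precisely the source the paper itself invokes in lieu of any proof.
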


In \cite{agv:88}, one can learn much more, including the properties of the variation operator and the Seifert form.
The main problem about this method is that actual computations are really hard. The fact that there is a 
strong geometric knowledge
of the Milnor fibration in the case~$n=1$ makes very interesting the approach of 
A.M.~Gabri{\`e}lov in~\cite{gb:79}, since he proposed an inductive method 
on the dimension.

After a choice of good coordinates, the input  of Gabri{\`e}lov's method are the invariants of a \emph{generic} polar curve of 
$V$ and the intersection form of $(V_\beta,0)\subset(\mathbb{C}^n,0)$, where $V_\beta$ is the zero locus
of $F(\beta,x_1,\dots,x_n)$, for $0<\beta\ll 1$. We are not going to enter in the details, the polar curve
is a classical invariant of hypersurface singularities, see e.g.~\cite{Teissier:77}.
Starting from a distinguished basis of vanishing cycles for $V_\beta$ and its intersection form, using
the polar invariants Gabri{\`e}lov constructs a distinguished basis of vanishing cycles for $V$
and computes its intersection form.

In~\cite{Escario:06}, M.~Escario uses this method to provide an effective way to compute 
the intersection form of a $k$-LYS. Let us assume that the equation $f_d(x,y,z)=0$ of $C_d$ is given in some generic coordinates. In particular, the homogeneous polynomial $f_d(x,y,0)$ has no multiple roots.
As we pointed out, it is enough to
compute the intersection form for $F(x,y,z)=f_d(x,y,z)+z^{d+k}$.
Escario's procedure is quite technical but we want to emphasize that the input 
is the monodromy of the polynomial $f(x,y,1)$. This polynomial is a locally trivial fibration
outside the critical values, and this is the monodromy needed for  Gabri{\`e}lov's method

In~\cite{Escario:06a}
using a so-called \emph{discriminant method}, he computes the braid monodromy
of the discriminant of  $f(x,y,1)-t$ (with respect to the variable~$y$). This is principal part of the input in 
Gabri{\`e}lov's method. The relevant point is that this computation can be made effective using the
work of J.~Carmona~\cite{Carmona2003} and the joint work of M.~Marco and M.~Rodr{\'i}guez~\cite{mr:16}.

We are going to see how this method applies to $k$-LYS.
The first step is to present a problem stated (and solved) by O.~Saeki.

\begin{problem}[\cite{Saeki:91}]\label{pb:saeki}
Are there germs of isolated hypersurface singularities such that in its $\mu$-constant stratum
there is no germ admitting real equations?
\end{problem}
It is well-known that the answer is no if $n=1$. For $n=2$, the first step would be to study the problem
for the tangent cone (and looking for singularities where the combinatorics of the tangent cone is constant
in the $\mu$-constant stratum, which is not always the case, see~\cite{ABLM-milnor-number}). 

Saeki started from
the curve $(x^3-y^3)(y^3-z^3)(z^3-x^3)=0$. Of course, this is not a right example, since the previous equation
is actually \emph{real}. Nevertheless, it is not hard to see that in the space of curves with the same combinatorics,
it is not possible to get always real equations for the irreducible components $\ell_1=0,\dots,\ell_9=0$, all linear. 

The idea of O.~Saeki is to consider a non-reduced tangent cone $g_0:=\ell_1^{n_1}\cdot\ldots\cdot\ell_9^{n_9}$,
where the $n_i$ are positive and pairwise distinct. Of course, there is no SIS with this tangent cone, but 
Melle's $t$-singularities~\cite{MH00} can do the job. Let us consider 
a \emph{generic} homogeneous polynomial $g_1$ of degree~$n_1+\dots+n_9+1$,
namely, the projective curve $\{g_1=0\}$ intersects transversally the 
reduced curve $\{\ell_1\cdot\ldots\cdot\ell_9=0\}$.
Under these conditions,
the singularity defined by $F:=g_0+g_1$ is isolated and for any $\tilde{F}$
defining a singularity in its 
$\mu$-constant stratum,
$\tilde{F}$
does not admit real equations, since it can be proved that
the tangent cone is fixed.

Let us relate this problem with another one stated by N.~A'Campo. Let us recall that from the Monodromy Theorem~\ref{thm:mon}, the characteristic polynomial of $\Phi_{*}$ over $H^n(\Sigma;\mathbb{Q})$ is a product of cyclotomic polynomials. In particular,
$\Phi^{*}$ and $(\Phi^{*})^{-1}$ are conjugate since they have the same Jordan blocks.

\begin{problem}[\cite{acampo:03}]\label{pb:acampo}
Let $V$ be a germ of an isolated singularity of hypersurface. Let $\Phi$ be its monodromy and let $\Phi_*^\mathbb{Z}$
be the automorphism defined on $H_n(\Sigma;\mathbb{Z})$. Are $\Phi_*^\mathbb{Z}$ and $(\Phi_*^\mathbb{Z})^{-1}$ conjugate?
\end{problem}

The relationship between the two problems can from the following reasoning. Let $V$ be a germ of an isolated singularity of hypersurface with real equations. One can take the Milnor fiber over a real value. Note that the monodromy of $V$
is taken running counterclockwise a circle about the origin. The above condition 
implies that $V$ is preserved by the complex conjugation, which sends $\Phi$ to $\Phi^{-1}$. Hence, 
the answer to Problem~\ref{pb:acampo} is yes if $V$ is defined by a real equation, or more precisely, in its
$\mu$-constant stratum there is a singularity with real equations, e.g., if $n=1$.

\begin{remark}\label{remark:complex}
Actually in the previous discussion, one can give a sharper argument. If $V$ and $\overline{V}$ are in the same
$\mu$-constant stratum (even if no element of this stratum has real equations), then the answer to Problem~\ref{pb:acampo} is also yes. 
\end{remark}

Note that in Saeki's example~$V$, the singularities  $V$ and $\overline{V}$ are not in the same
$\mu$-constant stratum, so, it would be interesting to check its answer to Problem~\ref{pb:acampo}.
The main problem is the computation of the monodromy over the integers. There is an aside problem;
even if this monodromy can be computed, the Milnor number of $V$ is huge, so it will be difficult
to check the conjugacy. To be honest, in a private conversation, N.~A'Campo ensured that he had
effective methods for this checking, even for large ranks.

Thanks to M.~Escario's work an example of a $k$-LYS (or SIS) not satisfying 
the condition in Remark~\ref{remark:complex}
could lead to an answer to Problem~\ref{pb:acampo}.
In~\cite{alm:06} it was proved that two SIS were in the same $\mu$-constant stratum
if and only if their tangent cones had the same combinatorics and they were in the same connected
component of the space of curves with that combinatorics.

There is a combinatorics in~\cite[Example~4.2]{acm:07}, such that no curve with this combinatorial type has real equations 
and the space of curves with that combinatorics has two connected components, which are \emph{conjugate}.
The degree of the combinatorics is~$6$. Independently, M.~Namba~\cite{Namba1984} and A.I.~Degtyar{\"e}v~\cite{degt:89}
proved that the space of curves of degree $\leq 5$ with a given combinatorics is always connected.
Hence, the degree of this example is minimal.
 
\begin{proposition}\label{prop:complex}
The space of irreducible sextic curves having three singular points of types $\mathbb{E}_8$ ($v^3+u^5=0$),
$\mathbb{E}_7$ ($v^3+u v^3 + u^5=0$), and $\mathbb{A}_4$ ($v^2+u^5=0$) has two conjugate connected components.
Moreover, any such curve is projectively equivalent to either $C$ or $\overline{C}$
where $C$ is defined by the polynomial $f_6(x,y,z)$:
\begin{gather*}
\left(8 i + 4\right) x^{3} y^{3} + \left(-24 i - 12\right) x^{3} y^{2} z + \left(-24 i - 12\right) x^{2} y^{3} z + \left(24 i + 12\right) x^{3} y z^{2} + \\
\left(48 i + 4\right) x^{2} y^{2} z^{2} + \left(24 i + 12\right) x y^{3} z^{2} + \left(-8 i - 4\right) x^{3} z^{3} + \left(-24 i + 3\right) x^{2} y z^{3} +\\ \left(-24 i + 28\right) x y^{2} z^{3} + \left(-8 i - 4\right) y^{3} z^{3} + 5 x^{2} z^{4} + 20 i x y z^{4} - 20 y^{2} z^{4}.
\end{gather*}

\end{proposition}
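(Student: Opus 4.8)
The plan is to treat Proposition~\ref{prop:complex} as an explicit classification: reduce it to a zero-dimensional polynomial system by fixing a projective frame adapted to the three singular points, solve that system, and read off that its solutions form a single complex-conjugate pair. I would first justify rigidity by a parameter count. The three singularities are simple (ADE), so imposing each of them at an unspecified point is a condition of codimension equal to its Milnor number on the $27$-dimensional space $\mathbb{P}(H^0(\mathbb{P}^2,\mathscr{O}(6)))$ of sextics; here $\mu(\mathbb{E}_8)=8$, $\mu(\mathbb{E}_7)=7$ and $\mu(\mathbb{A}_4)=4$. Since $27-(8+7+4)=8=\dim PGL_3(\mathbb{C})$, the expected dimension of the moduli of such curves (sextics modulo projective equivalence) is $0$, which already matches the claim of finitely many --- in fact two --- curves. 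I would also record that $\delta(\mathbb{E}_8)+\delta(\mathbb{E}_7)+\delta(\mathbb{A}_4)=4+4+2=10=\binom{5}{2}$, so the genus--degree formula forces geometric genus $0$; hence any such sextic is automatically irreducible, has no further singular points, and its three prescribed singularities saturate the adjunction bound.

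Next comes the normalization and the set-up of the system. Each singularity has multiplicity $2$ or $3$ with a single tangent line, so I would use $PGL_3(\mathbb{C})$ to place the $\mathbb{E}_8$-point at $[0:0:1]$, the $\mathbb{E}_7$-point at $[0:1:0]$ and the $\mathbb{A}_4$-point at $[1:0:0]$, and then use the residual torus to normalize the tangent directions as far as possible; the three distinct singularity types preclude any extra finite symmetry. Writing the general sextic with these vanishing and tangency constraints and expanding its local equation at each vertex in adapted local coordinates, the demand that the germ be analytically equivalent to $v^3+u^5$, to the $\mathbb{E}_7$ normal form, and to $v^2+u^5$ respectively translates into a finite set of polynomial equations on the remaining coefficients (the relevant jet must be a pure power of the tangent form, and the next Newton-diagram coefficients must satisfy the cusp conditions). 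I would then eliminate, by a Gröbner-basis or resultant computation, to obtain the solution set.

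The heart of the matter, and the step I expect to be the main obstacle, is to prove that this system has exactly two solutions and that they are complex conjugate. I anticipate the elimination to collapse to a single quadratic with no real root --- the factor $i$ throughout $f_6$ is the tell-tale sign of a relation of the shape $X^2+(\text{positive real})=0$ --- producing precisely the conjugate pair $C,\overline{C}$; the genuine work is to control the elimination carefully enough to exclude spurious branches or positive-dimensional components, for which the dimension count above is the guide. To finish, I would verify by a direct local computation that the displayed $f_6$ does realize $\mathbb{E}_8,\mathbb{E}_7,\mathbb{A}_4$ and has genus $0$, and then separate the two curves: the locus of such sextics is defined over $\mathbb{R}$, so complex conjugation acts on it, and it suffices to exhibit a $PGL_3(\mathbb{C})$-invariant of the configuration (for instance a cross-ratio built from the three singular points and their tangent lines) whose value for $C$ is non-real. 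Its non-reality forces $C\not\cong\overline{C}$ over $\mathbb{C}$, so the two $PGL_3(\mathbb{C})$-orbits are distinct connected components interchanged by conjugation, which is exactly the assertion.
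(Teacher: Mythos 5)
Your proposal is correct and takes essentially the same route as the paper's proof (which leans on \cite{acm:07}): place the three singular points at the coordinate vertices, use the residual torus to normalize the tangent lines at two of them, and let the resulting explicit computation force the equation to be $f_6$ or $\overline{f}_6$. The only real difference is the final separation step: where you propose exhibiting a non-real $PGL_3(\mathbb{C})$-invariant (a cross-ratio) to show $C\not\cong\overline{C}$, the paper uses the sharper observation that the normalizing automorphism is \emph{unique}, so the normalized equation is itself a complete invariant of the projective class and the two orbits are distinct because $f_6$ is not proportional to $\overline{f}_6$ --- a device that also yields the non-existence of real equations via the non-real tangent line at the third point; your dimension and genus counts are consistent but, as you acknowledge, carry no logical weight.
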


\begin{proof}
In \cite{acm:07} we start with such a curve $C$. It is easy to prove that there is a unique projective automorphism such that the singular points are $[0:1:0], [1:0:0], [0:0:1]$ and 
that the tangent lines at the first two singular points are $x-z=0$ and $y-z=0$. We obtain then the equation must be either $f_6=0$ or $\overline{f}_6=0$. Moreover, the tangent line
at the last singular point has no real equation. If the curve would admit real equations, the coordinates of the three singular points are real (they are of different type, hence they cannot be complex conjugate), as the equations of the tangent lines. 
Then, the projective automorphism must be real and it is not possible.
\end{proof}

\begin{remark}\label{remark:ozp}
It is obvious that these curves $C,\overline{C}$ do not form a Zariski pair, since the complex conjugation in $\mathbb{C}$ defines a homeomorphism $(\mathbb{P}^2,C)\to(\mathbb{P}^2,\overline{C})$.
Note that this homeomorphism respects the natural orientation of $\mathbb{P}^2$ but does not respect the natural orientation of $C,\overline{C}$; see e.g.~\cite[p.~8]{agv:88} for the notion of complex orientation of a complex variety. These curves are natural candidates for the notion of \emph{oriented Zariski pairs}, i.e., curves with the same
combinatorics with no homeomorphism of pairs preserving orientations.

Unfortunately we do not know if $C,\overline{C}$ form an oriented Zariski pair. In~\cite{acm:07} we prove that if $C'$ is the curve of equation
$xz(125 x - 4 (2 + 11 i)z)=0$, then $C\cup C',\overline{C}\cup \overline{C'}$ form an oriented Zariski pair.
\end{remark}

The Milnor number of a SIS having $C$ as tangent cone is $144$. It is not known if it is minimal along the singularities of surface not satisfying the condition in 
Remark~\ref{remark:complex} but most likely the minimal Milnor number will be in the same order of magnitude.
There is a lot of work for these computations, but all the tools are available.

These ideas fall into the main features
of SIS: they provide sophisticated examples which have computable features.
 \section{SIS, \texorpdfstring{$k$}{k}-LYS, and Zariski pairs not of Alexander type}\label{sec:pb}

After the work of I.~Luengo, where he used SIS to prove that $\mu$-constant strata may not be smooth, using properties of projective plane curves,
the second main contribution of SIS was to provide counterexamples to Yau's Conjecture~\ref{cjt:yau}. The idea is the same one: use the properties
of the tangent cones. The counterexamples shown in Theorem~\ref{thm:counter_yau} come from the existence of Alexander-Zariski pairs, exhibited in~\cite{ea:jag}
and coming from the works of O.~Zariski.

Let us try to isolate the geometric properties that led to these results. A model of the Milnor fibration is obtained from divisor of the semistable
normalization of an embedded $\mathbb{Q}$-resolution. Let us consider to $k$-LYS $V',V''$ with tangent cones $C',C''$ having the same combinatorics.
We obtain two embedded $\mathbb{Q}$-resolutions with the same number~$r$ of irreducible components and a natural bijection
\[
\begin{tikzcd}[row sep=0,/tikz/column 1/.append style={anchor=base east},/tikz/column 2/.append style={anchor=base west}]
\{E'_J\mid\emptyset\neq J\subset\{0,1,\dots,r\}\}\rar[leftrightarrow]&
\{E''_J\mid\emptyset\neq J\subset\{0,1,\dots,r\}\}\\
E'_J\rar[leftrightarrow]& E''_J.
\end{tikzcd}
\]
This bijection induces homeomorphisms $h_J:E'_J\to E''_J$ if $J\neq\{1\}$. Moreover, these homeomorphisms are 
naturally compatible for distinct sets of indices.

As a consequence, if we consider the divisors $D',D''$ of the semistable normalizations, the bijection also induces compatible homeomorphisms $\tilde{h}_J$,
$J\neq\{1\}$,
such that the following diagram is commutative:
\[
\begin{tikzcd}
D'_J\rar["\tilde{h}_J"]\dar&D''_J\dar\\
E'_J\rar["h_J"]&E''_J.
\end{tikzcd}
\]

From the definition of Zariski pairs, we obtain the sufficient and necessary condition to obtain this diagram
also for $J=\{1\}$.

\begin{lemma}
There are homeomorphisms $h_1:E'_1\to E''_1$ and $\tilde{h}_1:D'_1\to D''_1$ extending the compatibility if and only the tangent cones do \emph{not}
form a Zariski pair.
\end{lemma}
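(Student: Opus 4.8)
The plan is to reduce the statement to the topological equivalence of the pairs $(\mathbb{P}^2,C')$ and $(\mathbb{P}^2,C'')$, using the description of $E_1$ and $D_1$ furnished by diagram~\eqref{eq:cyclic_cover}. Recall that $\tilde\sigma\colon E_1\to\mathbb{P}^2$ is the minimal embedded $\mathbb{Q}$-resolution of the tangent cone, that $E_1\cap E_0$ is the strict transform of $C_d$, that the curves $E_1\cap E_j$ ($j\ge 2$) are the exceptional divisors of $\tilde\sigma$, and that $\tau_D\colon D_1\to E_1$ is the pull-back of the $d$-cyclic cover $\tau\colon X_d\to\mathbb{P}^2$ of Construction~\ref{cnt:cyclic_cover}, branched along the total transform of $C_d$. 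Its unramified part is classified by the homomorphism $\pi_1(\mathbb{P}^2\setminus C_d)\to\mathbb{Z}/d$ sending every meridian of $C_d$ to $1$, a datum preserved up to sign (that is, up to the deck automorphism $\eta$) by any homeomorphism of pairs.

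First I would treat the implication from the existence of $h_1$ to the negation of the Zariski pair condition; only $h_1$ is needed here. Compatibility of $h_1$ with the already-constructed $h_{\{0,1\}}$ forces $h_1$ to carry the strict transform $E'_1\cap E'_0$ of $C'$ onto that of $C''$, while compatibility with the $h_{\{1,j\}}$, $j\ge 2$, forces it to carry the exceptional configuration of $\tilde\sigma'$ onto that of $\tilde\sigma''$ matching the combinatorics. Running the weighted blow-downs, i.e.\ contracting these exceptional divisors, is a topological operation dictated by the combinatorics, and a homeomorphism respecting the exceptional structure descends along it; hence $h_1$ induces a homeomorphism $(\mathbb{P}^2,C')\to(\mathbb{P}^2,C'')$ with $C'\mapsto C''$. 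By the definition of a Zariski pair, $C'$ and $C''$ do not form one.

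For the converse I would start from a homeomorphism $\Psi\colon(\mathbb{P}^2,C')\to(\mathbb{P}^2,C'')$ sending $C'$ to $C''$, which exists precisely when the two curves, having equal combinatorics, do not form a Zariski pair. Since $\Psi$ is a homeomorphism of pairs it matches the local topological types along the curves, which already coincide by Proposition~\ref{prop:local_topology}, so it lifts through the combinatorially determined resolution $\tilde\sigma$ to a homeomorphism $h_1\colon E'_1\to E''_1$ that we arrange, up to isotopy, to restrict to the given $h_{\{0,1\}}$ and $h_{\{1,j\}}$. It then remains to lift $h_1$ to the cyclic covers. As $h_1$ carries the branch locus of $\tau'_D$ to that of $\tau''_D$, preserving meridians and the (combinatorial) ramification multiplicities, it respects the classifying $\mathbb{Z}/d$-homomorphism up to sign; since the covers attached to a homomorphism and its negative coincide as spaces, with the sign only inverting the deck action generated by $\eta$, the map $h_1$ lifts to $\tilde h_1\colon D'_1\to D''_1$ making \eqref{eq:cyclic_cover} commute, and one chooses the lift to agree with the given $\tilde h_J$ over the boundary pieces.

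The main obstacle, and the only genuinely topological point, is the cyclic-cover lifting in the converse: one must know that the branched cover $D_1\cong X_d$ is determined by the homeomorphism type of $(\mathbb{P}^2,C_d)$ and that a homeomorphism of pairs respects its defining $\mathbb{Z}/d$-datum, the subtlety being the orientation sign, which is harmless since reversing it merely conjugates the cover by $\eta$. The remaining difficulty is bookkeeping: arranging the lifts $h_1,\tilde h_1$ to agree with the previously fixed family $\{h_J,\tilde h_J\}_{J\ne\{1\}}$ over all overlaps $E'_{\{1\}\cup J}$. This is achievable up to isotopy because the relevant pieces are exceptional configurations and cyclic covers of curves whose self-homeomorphisms are flexible and whose matching is forced by the common combinatorics.
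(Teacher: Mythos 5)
Your proof is correct and follows essentially the route the paper intends: the paper deduces this lemma directly from the definition of Zariski pairs, i.e., from the equivalence between compatible homeomorphisms at the level of $E_1$, $D_1$ and a homeomorphism of pairs $(\mathbb{P}^2,C')\to(\mathbb{P}^2,C'')$. Your blow-down argument for the forward direction, and for the converse the lifting through the combinatorially determined $\mathbb{Q}$-resolution together with the cyclic-cover lifting via the meridian-classifying $\mathbb{Z}/d$-datum, are precisely the details the paper leaves implicit.
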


We can interpret these results as the existence of a decomposition of the Milnor fibration. If the tangent cones are not in a Zariski pair, it is easy
to see that both Milnor fibrations are equivalent, and the singularities have the same embedded topology.

On the other side, if the tangent cones form a Zariski pair, then the decompositions are not equal and we cannot deduce that the Milnor fibrations are equivalent.
The decomposition of the Milnor fibration in terms of the semistable normalization have been thorough studied for $n=1$. Actually, the induced
the decomposition of the Milnor fiber coincides with Nielsen-Thurston~\cite{nie:44, th:88} decomposition of the monodromy, and this decomposition is unique up to isotopy!

If the unicity of this decomposition would hold in higher dimensions, we would deduce that if  $C',C''$ form a Zariski pair, then the $k$-LYS $V',V''$
would have different embedded topology. Actually, with some extra-work, if $V$ is a SIS with tangent cone the
curve $C\cup C'$ in Remark~\ref{remark:ozp}, it would provide a negative answer for Problem~\ref{pb:acampo}.

Even without this unicity, Theorem~\ref{thm:counter_yau} works if the tangent cones form an~\emph{Alexander-Zariski pair}, since  the Jordan form of the monodromies are distinct, and the same happens for the embedded topology.

\begin{problem}
Let  $V',V''$ be $k$-LYS whose tangent cones $C',C''$ form a Zariski pair which is not of Alexander type. What can be said about the embedded topology?
\end{problem}

The original examples of Zariski pairs were of Alexander type. The first Zariski pairs not of Alexander type appeared in~\cite{AC:98}. They correspond
to combinatorics of reducible curves. Actually, putting some weights on the connected components, they are distinguished by the Alexander polynomial
of some non-reduced curves. The problem is that up to now, we have not found how to translate this invariant into an invariant of a SIS or a $k$-LYS.

Much more types of Zariski pairs have been found, with different invariants that distinguish them, see a survey~\cite{act:08} of 2008, to check much more of them.
Actually, after this survey, qualitative different kinds of Zariski pairs have been found.

Let us point out several particular cases:
\begin{enumerate}[label=\rm(ZP\arabic{enumi})]
\item\label{zp1} Zariski pairs $C_1,C_2$ such that $\pi_1(\mathbb{P}^2\setminus C_1)\cong\pi_1(\mathbb{P}^2\setminus C_2)$.
\item Zariski pairs $C_1,C_2$ where $\mathbb{P}^2\setminus C_1\cong \mathbb{P}^2\setminus C_2$ but no homeomorphism
can be extended to $\mathbb{P}^2$.
\item\label{zp3} Zariski pairs $C_1,C_2$ defined by equations $F_i=0$, such that $F_i\in\mathbb{K}[x,y,z]$, where $\mathbb{K}$
is a number field, and there exists $\varphi\in\galArtal(\mathbb{K},\mathbb{Q})$ such that $F_2=F_1^\varphi$. They are
called \emph{arithmetic Zariski pairs} and they share all the algebraic properties. In the same way,
we can construct conjugated $k$-LYS
$V_1, V_2$ which share all the algebraic properties.
\end{enumerate}

All the above Zariski pairs are not of Alexander type. There are many examples in the literature, we sketch a family
which serves for~\ref{zp1} and~\ref{zp3}.

\begin{example}[{\rm\cite{acm:20}}]
Let us consider the combinatorics $\cmbArtal_n$, $n\geq 3$, defined by the following properties. There are four irreducible components, three lines and a curve of degree~$2n$.
This curve has three singular points all of them with the topological type of $v^n-u^{n+1}=0$, and the lines pairwise connect the singular points.
We prove in~\cite{acm:20} that after a projective change of coordinates the equation of such a curve can be written as $xyzg_{2n}(x,y,z)=0$ with the following properties:
\begin{enumerate}[label=(\alph{enumi})]
\item The monomials $x^ay^bz^c$ in $g_{2n}$ with non-vanishing coefficient satisfy the condition $\min(a+b,b+c,c+a)\geq n$.
\item The sum of the terms corresponding to the monomials $x^ny^bz^c$ equals $(y+z)^n$.
\item The sum of the terms corresponding to the monomials $x^ay^nz^c$ equals $(x+z)^n$.
\item\label{raiz} The sum of the terms corresponding to the monomials $x^ay^bz^n$ equals $(x+\zeta y)^n$, where $\zeta^n=1$.
\item The polynomial is \emph{generic} satisfying the above conditions.
\end{enumerate}
Two curves in $\cmbArtal_n$ are in the same connected component of $\cmbArtal_n$ if and only if the roots in \ref{raiz} are either equal or conjugate.

For $n=3$, we have two connected components, associated to $1,\exp\frac{2i\pi}{3}$. They form a Zariski pair since the fundamental groups of the complement are not isomorphic.
For $n>3$, the fundamental groups of the complements are always abelian. There is a sort of linking invariant that shows they form Zariski pairs (actually tuples).
For $n>3$, we are in \ref{zp1}, and for $n>4$, we have also examples of \ref{zp3}.
\end{example}

\begin{figure}[ht]
\centering
\begin{tikzpicture}
\begin{scope}
\coordinate (O) at (0,0);

\foreach \y in {0,1,2}
{
\foreach \x in {1,...,5}
{
\coordinate (A\x\y) at ([rotate={\y*120}] 1.5,{\x-2});
\fill (A\x\y) circle [radius=.1cm];
}
\draw (A1\y) -- (A5\y);
\draw (O) -- (A2\y);
}
\fill[gray] (O) node[below,black] {$4$} node[left,black] {$g=3$} circle
[radius=.1cm];

\foreach \x/\y in {1/5,2/1,3/2,4/2,5/2}
\foreach \z/\t in {0/right, 1/above, 2/below}
{
\node[\t] at (A\x\z) {$-\y$};
}
\end{scope}

\begin{scope}[xshift=5.9cm]
\coordinate (O) at (0,0);
\coordinate (B0) at (0.85,1);
\coordinate (B2) at (.5,-1);
\coordinate (B1) at (-2,-.25);
\foreach \y in {0,1,2}
{
\foreach \x in {1,...,5}
{
\coordinate (A\x\y) at ([rotate={\y*120}] 1.5,{\x-2});
\fill (A\x\y) circle [radius=.1cm];
}
\draw (A1\y) -- (A5\y);
\draw (O) -- (A2\y);
}

\foreach \x/\y in {1/5,2/1,3/2,4/2,5/2}
\foreach \z/\t in {0/right, 1/above, 2/below}
{
\node[\t] at (A\x\z) {$-\y$};
}

\draw (B0) -- (A10) -- (B2) -- (A12) -- (B1) -- (A11) -- cycle;

\fill[gray] (O) node[below,black] {$4$} node[left,black] {$g=3$} circle
[radius=.1cm];
\fill[gray] (B0) node[above,black] {$-1$} circle [radius=.1cm];
\fill[gray] (B1) node[above,black] {$-1$} circle [radius=.1cm];
\fill[gray] (B2) node[above,black] {$-1$} circle [radius=.1cm];
\end{scope}
\end{tikzpicture}
 \caption{$\cmbArtal_4$ (right) and $\cmbArtal_4^{\text{Irr}}$ (left)}
\label{fig:triangular}
\end{figure}

\begin{remark}
If we consider the combinatorics $\cmbArtal_n^{\text{Irr}}$ of the irreducible curves in $\cmbArtal_n$, we have also a disconnected space. For $n=3$, we have actually Alexander-Zariski pairs.
For $n>3$, it is unknown if they form a Zariski pair. It would be interesting if the corresponding SIS
may have different embedded topology and this property could be translated into a topological property of the curves in order to show that they form a Zariski pair.
\end{remark}
 \section{Weighted L{\^e}-Yomdin singularities}\label{sec:wlys}

Classical $k$-LYS allow to transform knowledge from complex projective plane curves to a family of germs of isolated singularities in $(\mathbb{C}^3,0)$
and the other way around. Following this idea in~\cite{ABLM-milnor-number} weighted L{\^e}-Yomdin singularities
play the same role where 
complex projective plane curves are replaced by projective curves in a weighted projective plane. 

The idea is similar. Fix a weight $\omega:=(p, q, r)$, $\gcd\omega=1$. Let $(V,0)$ be a germ of isolated singularity in $(\mathbb{C}^3,0)$
defined by $F(x,y,z)\in\mathbb{C}\{x,y,z\}$. As in \eqref{eq:desc_hom}, for some $k>0$, we can consider
\begin{equation}\label{eq:desc_whom}
F(x,y,z)= F_d(x,y,z) + \sum_{m\geq d+k} F_m(x,y,z),\qquad F_d\neq 0,
\end{equation}
but now instead of the decomposition
of~$F$ in homogeneous forms, we consider the one in $\omega$-weighted homogeneous forms. The subindex is the $\omega$-degree. We can consider in this case $d$ as the \emph{$\omega$-multiplicity}
of~$F$ or~$V$. 

We consider $C_m:=V_{\mathbb{P}^\omega}(F_m)$ which is a closed curve in the \emph{orbifold} $\mathbb{P}^2_\omega$, see~\S\ref{sec:wbu}. This is a
\emph{weighted projective plane curve}.

We want to consider a family of germs with the following property. Let $\sigma_\omega:(\hat{\mathbb{C}}^3_\omega, E)\to(\mathbb{C}^3,0)$
be the $\omega$-weighted blow-up of the origin,
see~\S\ref{sec:wbu}. Recall that $E\cap\hat{V}=C_d$, where
~$E\equiv\mathbb{P}^2_\omega$ is the exceptional divisor and $\hat{V}$ is the strict transform.
We want that outside $\singArtal(C_d)$ this intersection has $\mathbb{Q}$-normal crossings, and at $\singArtal C_d$ we have a situation similar to \ref{loc4}.

Following Definition~\ref{dfn:lys}, we may consider the condition $\singArtal(C_d)\cap C_{d+k}=\emptyset$. This is basically what we want but since
the total space of the $\omega$-weighted blowup is singular, we have to be more precise, as in~\cite{ACMNemethi}.

\begin{definition}\label{dfn:wlys}
Let $\omega:=(p, q, r)$, $\gcd\omega=1$, and $k>0$. Let $(V,0)$ be a germ of singularity in $(\mathbb{C}^3,0)$
defined by $F(x,y,z)\in\mathbb{C}\{x,y,z\}$ which can be decomposed as in~\eqref{eq:desc_whom}. Then we say 
that $(V,0)$ is an \emph{$(\omega,k)$-weighted L{\^e}-Yomdin singularity} ($(\omega,k)$-WLYS) if the following points
do not belong to $C_{d+k}$.

\begin{enumerate}[label=\rm(\roman{enumi})]
\item $P\in\singArtal(C_d)\setminus\singArtal(\hat{\mathbb{C}}^3_\omega)$.

\item\label{dfn:wlys-2} $P=[0:0:1]$, $r>1$, and the multiplicity of $F(x,y,1)$ is $>1$.
Something similar holds for the other vertices.

\item\label{dfn:wlys-3} $P\in C_d\cap\{z=0\}$, $P\neq [1:0:0], [0:1:0]$, $\gcd(p,q)>1$, and the intersection is not transversal at~$P$.
Something similar holds for the other axes.
\end{enumerate}
\end{definition}

As for $k$-LYS, it is not hard to prove that an $(\omega,k)$-WLYS is isolated.

\begin{remark}
There is a  geometric interpretation for \ref{dfn:wlys-2}. Let us consider 
the minimal resolution
of $(\mathbb{P}^2_\omega,P)$ of Proposition~\ref{prop:min_res}, with its exceptional divisor $E:=E_1\cup\dots\cup E_r$. Let $\hat{C}_d$ be the strict transform of $C_d$. The multiplicity of $F(x,y,1)$ is~$1$ if and only
if
\begin{enumerate}[label=\rm(\alph{enumi})]
\setcounter{enumi}{22}
\item $\hat{C}_d\cap E$ has only one point $\hat{Q}$ which is smooth for both $\hat{C}_d$ and $E$, with distinct tangent directions, and
$Q\in E_1\cup E_r$.
\item If $Q$ is in the strict transform of $x=0$, then $\gcd(q,r)=1$.
\item If $Q$ is in the strict transform of $y=0$, then $\gcd(p,r)=1$.
\end{enumerate}
For \ref{dfn:wlys-3}, if $\gcd(p,q)>1$, then for each $P=[x_0:y_0:0]_\omega$, for which either
\begin{enumerate}[label=\rm(\alph{enumi})]
\item $x_0y_0\neq 0$, or
\item $P=[0:1:0]_\omega$ and $\gcd(p,r)=1$, or
\item $P=[1:0:0]_\omega$ and $\gcd(q,r)=1$, then
\end{enumerate}
The singularity $(\hat{\mathbb{C}}^3_\omega,P)$ is cyclic quotient by a group of order $\gcd(p,q)$ and this is why the transversality condition
is needed.
\end{remark}

\begin{remark}
In the non-weighted case, given a curve $C_d$ we can construct $k$-LYS with $C_d$ as tangent cone for any $k>0$. In the weighted
case when there are singular points of $\mathbb{P}^2_\omega$ in $C_d$, it may happen that some $k$ cannot happen, since for some degrees a singular point 
of $E$ can be forced to be in any curve of that degree.
\end{remark}

\begin{example}
Let $\omega:=(2, 2, 3)$ and $C_{16}:=\{y^2 z^4 - x^5 z^2 + y^8=0\}$. 
Following \eqref{eq:orbifold}, there is an \emph{isomorphism}
\[
\begin{tikzcd}[row sep=0,/tikz/column 1/.append style={anchor=base east},/tikz/column 2/.append style={anchor=base west}]
\mathbb{P}^2_\omega\rar["\psi"]&\mathbb{P}^2_\eta\\
{[x:y:z]_\omega}\rar[mapsto]&{[x:y:z^2]_\eta}
\end{tikzcd}
\]
where $\eta=(1, 1, 3)$. Note that $\Phi(C_{16})=C_8=\{y^2 z^2 - x^5 z + y^8=0\}$. 
The only singular point of $\mathbb{P}^2_\eta$ is $[0:0:1]_\eta$ and $C_8\cap\{z=0\}=[1:0:0]_\eta$.
If we consider the affine chart $x=1$, the equation is $y^2 z^2 - z + y^8=0$, i.e, $C_8$
is smooth at $[1:0:0]_\eta$ but it is tangent to $z=0$.

Let us come back to the weight $\omega$. The space $(\hat{\mathbb{C}}^3_\omega$ is singular
at $P:=[0:0:1]_\omega\in C_{16}$ and at $\{z=0\}\subset E$. We have that $C_{16}\cap\{z=0\}=[1:0:0]_\omega=:Q$.
If we want to construct a $(\omega,k)$-WLYS, we need to take some $\omega$-weighted projective
curve $C_{16+k}=\{f_{16+k}(x,y,z)=0\}$ of $\omega$-degree $16+k$ such that $P,Q\notin C_{16+k}$.

This means that there are powers of  $z$ and $x$ with non-zero coefficients in $f_{16+k}(x,y,z)$, i.e.,
$16+k$ is a multiple of $6$. Hence the smallest $k$ equals~$2$. We can take
$f_{18}=x^{9}+z^6$. Then, $\{y^2 z^4 - x^5 z^2 + y^8 + x^{9}+z^6=0\}$ is an $(\omega,2)$-WLYS.
\end{example}

\begin{example}
Note that a singularity can be a $(\omega,k)$-WLYS for different values of~$(\omega,k)$.
This example was provided by I.~Luengo. Let $F_t(x,y,z)=x^5 + y^3 z + z^{11} + t x^2 y^2$,
$t\in\mathbb{C}$. 

We can consider it as a semiquasihomogeneous deformation of $F_0$. Note that $F_0$ defines
a smooth curve in $\mathbb{P}^2_{33,50,15}$ of degree~$165$. The weight of $x^2y^2$ is $166$, hence
the corresponding singularity is a $((33,50,15),1)$-WLYS. 

Consider the weight $(2,3,1)$. The weighted homogeneous polynomial $x^5 + y^3 z + t x^2 y^2$
has weighted degree~$10$ and then the corresponding singularity is a $((2,3,1),1)$-WLYS. 
It is also an example of $\mu$-constant family which is not $\mu^*$-constant family as the
well-known Brian{\c c}on-Speder example~\cite{bs:75}.
\end{example}

We can study $(\omega,k)$-WLYS using the same techniques as for $k$-LYS. In particular,
an embedded $\mathbb{Q}$-resolution is obtained as for $k$-LYS; it is enough to replace the first
standard blowing-up by an $\omega$-weighted blowing-up of the origin in~$\mathbb{C}^3$. Of course,
the total space has quotient singularities, but we already encountered this situation in the 
process to build an embedded $\mathbb{Q}$-resolution for $k$-LYS. The numerical invariants of these resolution
are trickier than for $k$-LYS, but the idea is the same one.

As one can define combinatorics of weighted projective curves, $(\omega,k)$-WLYS 
with $\omega$-\emph{weighted} tangent cones with the same combinatorics have embedded $\mathbb{Q}$-resolutions
which may differ only in the first divisor and the results of \S\ref{sec:qres} remain true with slights
modifications. Actually, the same happens for the results of other sections.
The notion of Zariski pairs (and Alexander-Zariski pairs) can be also defined, see~\cite{ACMNemethi}, providing also counterexamples
to Yau's conjecture~\ref{cjt:yau}.

Why  $(\omega,k)$-WLYS may be interesting? They provide a bridge between singularity theory and weighted
projective plane curves and this bridge is larger than the one provided by $k$-LYS. Let us state some cases.

\begin{enumerate}[label=(W\arabic{enumi})]
\item Examples of $\mu$-constant deformations which are not $\mu^*$-constant do exist among $(\omega,k)$-WLYS
and not among $k$-LYS. Moreover, it provides simple proofs for the topological triviality of these families
when the family of $\omega$-\emph{weighted} tangent cone is also topologically trivial.

\item A singularity of plane curve has a unique minimal embedded $\mathbb{Q}$-resolution, and most likely the
same happens for most $k$-LYS. The fact that a singularity can be $(\omega_i,k_i)$-WLYS, $i=1,2$,
for  $(\omega_1,k_1)=(\omega_2,k_2)$, may imply the existence of several minimal embedded $\mathbb{Q}$-resolutions.

\item There are $k$-LYS whose link is a $\mathbb{Q}$-homology sphere, see Proposition~\ref{prop:lys_qhs}.
In~\cite{ACMNemethi} we prove that there is no $k$-LYS whose link is a $\mathbb{Z}$-homology sphere, but there are among $(\omega,k)$-WLYS.

\item Monodromy conjecture for the topological and Igusa zeta functions holds for SIS~\cite{aclm:02}, and following the same ideas
it should hold for $k$-LYS. There are no yet formulas for the characteristic polynomial $\Delta$ and for the topological zeta function~$Z$
of an~$(\omega,k)$-WLYS but using~\cite{GLM:97} (for $\Delta$) and~\cite{lmvv:20} (for~$Z$) the computation
of such formulas is an affordable task, and
monodromy conjecture could be confronted.
\end{enumerate}

 \section{Quotient singularities}\label{sec:quot}

In this section we may a short presentation of abelian quotient singularities. There are several key references, e.g.~\cite{ca:57, Prill:67}. Roughly speaking, an (abelian) quotient singularity is the quotient 
of $(\mathbb{C}^n,0)$ by the action of a finite abelian subgroup~$G$ of analytic isomorphisms. To be more precise, 
such a $G$ acts also on $\mathscr{O}_{\mathbb{C}^n,0}=\mathbb{C}\{x_1,\dots,x_n\}$ and the fixed ring $\mathscr{O}_{\mathbb{C}^n,0}^G$ is normal
and defines a germ of complex variety, namely the corresponding quotient singularity.

Let us add a word of warning. This is closely related to \emph{orbifolds}, where one focuses not only on $\mathscr{O}_{\mathbb{C}^n,0}^G$, but also on the action. We are mostly interested in the varieties, but the quotient and orbifold language simplifies the exposition. We can simplify the exposition assuming that $G$ is a linear subgroup~\cite[Théorème~4]{ca:57}.

\begin{example}\label{ejm:dim1}
One could start with quotient singularities in dimension~$1$. It is not hard to see that a faithful action must be cyclic
and it will be by multiplication of roots of unity. If it is the action of $\mu_d:=\{\zeta\in\mathbb{C}\mid\zeta^d=1\}$,
then it is clear that $\mathbb{C}\{x\}^{\mu_d}=\mathbb{C}\{x^d\}$ and the corresponding \emph{quotient singularity}
is actually smooth.
\end{example}

D.~Prill~\cite{Prill:67} took care of this situation and defined the notion of \emph{small linear group}, the only reflection about a hyperplane is the identity. He showed that it is enough to consider small groups to get all the quotient singularities. We borrow some notation from~\cite{AMO:2014a} in order to describe these singularities.

\begin{ntc}
The quotient singularity of $(\mathbb{C}^n,0)$ defined by the action of 
the group $\mu_{d_1}\times\dots\times\mu_{d_r}$ given by
\[
(\zeta_1,\dots,\zeta_r)\cdot(x_1,\dots,x_n):=\left(x_1\prod_{j=1}^r\zeta_j^{a_{j1}},\dots,x_n\prod_{j=1}^r\zeta_j^{a_{jn}}\right)
\]
will be denoted as
\[
X\begin{pNiceArray}{c|ccc}
d_1&a_{11}&\cdots&a_{1n}\\
\vdots&\vdots&\ddots&\vdots\\
d_r&a_{r1}&\cdots&a_{rn}
\end{pNiceArray}.
\]
If $d_1=\dots=d_r=d$, we may denote it also by $\frac{1}{d}A$ where $A$ is the above $(a_{i,j})$ matrix
of size $r\times n$.
\end{ntc}
This notation also serves for the orbifolds.
Different symbols can represent the same singularity. Let us recall some operations which keep 
invariant the type of the quotient singularity.

\begin{lemma}[{\cite[Lemma 1.2]{AMO:2014b}}]
The following operations do not change the isomorphism type of
$X(\mathbf{d}| A)$.
\begin{enumerate}[label=\rm(\arabic{enumi}), series=ops]
\item Permutation of the columns of $A$ (the variables are correspondingly permuted).
\item Permutation of the rows of $(\mathbf{d}| A)$.
\item Multiplication of the last row of  $(\mathbf{d}| A)$ by a positive integer.
\item Multiplication of the last row of $A$ by an integer coprime with $d_r$.
\item Replace $a_{i,j}$ by  $a_{i,j}+kd_i$, $k\in\mathbb{Z}$.
\item If $d_r=1$, eliminate the last row.
\end{enumerate}
All these operations do not change the orbifold.
\end{lemma}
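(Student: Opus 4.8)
The plan is to reduce the whole statement to a single assertion about subgroups of $GL_n(\mathbb{C})$. Write $G_1:=\mu_{d_1}\times\dots\times\mu_{d_r}$ for the presenting group and let $\rho:G_1\to GL_n(\mathbb{C})$ be the diagonal representation $\rho(\zeta_1,\dots,\zeta_r)=\mathrm{diag}\bigl(\prod_{j=1}^r\zeta_j^{a_{j1}},\dots,\prod_{j=1}^r\zeta_j^{a_{jn}}\bigr)$, so that $X(\mathbf{d}|A)=\mathbb{C}^n/\rho(G_1)$ and, as an orbifold, it is the quotient by the \emph{effective} action, i.e.\ by $G:=\rho(G_1)\subseteq GL_n(\mathbb{C})$. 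The key remark I would isolate first is that the isomorphism type of the variety, and of the orbifold, depends only on the image subgroup $G$ up to a simultaneous permutation of the coordinates $x_1,\dots,x_n$: if two data give literally the same $G$ the quotients coincide, and if they give subgroups conjugate by a permutation matrix $P$ then $P$ induces the isomorphism. Thus it suffices to check, operation by operation, that $G$ is preserved, possibly after a coordinate permutation.

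Operations (1), (2), (5), (6) I would dispatch immediately. Operation (1) permutes the columns of $A$, which is exactly conjugation of $\rho$ by the permutation matrix permuting the variables, so $G$ is replaced by a conjugate and the quotient is isomorphic. Operation (2) only reorders the factors of $G_1$; since the products $\prod_j\zeta_j^{a_{jk}}$ are commutative, the map $\rho$ and its image $G$ are literally unchanged. For (5), replacing $a_{ij}$ by $a_{ij}+kd_i$ does not alter $\rho$ at all, because $\zeta_i^{d_i}=1$ for every $\zeta_i\in\mu_{d_i}$, whence $\zeta_i^{a_{ij}+kd_i}=\zeta_i^{a_{ij}}$. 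For (6), if $d_r=1$ the last factor $\mu_1$ is trivial and contributes the identity to every product, so deleting it changes neither $\rho$ nor $G$.

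The two substantive cases are (3) and (4), and both rest on elementary facts about roots of unity. For (3), multiplying the last row of $(\mathbf{d}|A)$ by a positive integer $m$ replaces the last factor $\mu_{d_r}$ by $\mu_{md_r}$ and $a_{rk}$ by $ma_{rk}$; since $\zeta\mapsto\zeta^m$ maps $\mu_{md_r}$ \emph{onto} $\mu_{d_r}$, the image of the last factor under the new representation, generated by $\mathrm{diag}(\zeta^{ma_{r1}},\dots,\zeta^{ma_{rn}})=\mathrm{diag}((\zeta^m)^{a_{r1}},\dots,(\zeta^m)^{a_{rn}})$, coincides with its old image, so $G$ is unchanged; the kernel $\mu_m$ acts trivially, which is precisely why the effective quotient, and hence the orbifold, is also unchanged. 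For (4), multiplying the last row of $A$ by $c$ with $\gcd(c,d_r)=1$ fixes $d_r$ but replaces $a_{rk}$ by $ca_{rk}$; choosing $c'$ with $cc'\equiv1\pmod{d_r}$ and substituting the generator $\zeta_r=\xi^{c'}$ gives $\zeta_r^{ca_{rk}}=\xi^{a_{rk}}$, and since $\gcd(c,d_r)=1$ makes $\xi\mapsto\xi^{c'}$ an automorphism of $\mu_{d_r}$, this is merely a reparametrization of the last generator and again fixes $G$ and the effective action.

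I expect the only real subtlety, the part worth stating carefully rather than the bookkeeping, to be the orbifold assertion in case (3), where the presenting group genuinely enlarges from $\mu_{d_r}$ to $\mu_{md_r}$. The point to make precise is that the orbifold attached to $X(\mathbf{d}|A)$ is the quotient by the effective action $G=\rho(G_1)$ and not by the abstract group $G_1$, so that passing to a non-faithful presentation with the larger kernel $\mu_m$ leaves the orbifold intact; once this convention is fixed, the verification is routine in all six cases.
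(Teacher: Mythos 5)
Your proof is correct. Note first that the paper itself contains no argument to compare against: this lemma is quoted verbatim from \cite[Lemma~1.2]{AMO:2014b} and used as a black box, so the only question is whether your blind proof is sound, and it is. The reduction to the image subgroup $G=\rho(G_1)\subseteq GL_n(\mathbb{C})$ is exactly the right organizing principle: the invariant ring $\mathbb{C}\{x_1,\dots,x_n\}^{G_1}$ depends only on $G$, and conjugating $G$ by a permutation matrix gives an isomorphic quotient, so all six verifications become statements about images of diagonal representations. Your case-by-case checks are all accurate; in particular, for (3) the surjectivity of the $m$-th power map $\mu_{md_r}\twoheadrightarrow\mu_{d_r}$ and for (4) the fact that $\zeta\mapsto\zeta^{c}$ is an automorphism of $\mu_{d_r}$ are precisely the points needed.

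Your emphasis on the orbifold convention is also the right call, and it is consistent with the surrounding text of the paper: the example immediately after the lemma asserts that $\frac{1}{d}(a)=\frac{\gcd(a,d)}{d}(1)$ is ``normalized as orbifold,'' which only makes sense if the orbifold is taken to be the quotient by the \emph{effectivized} action, i.e.\ by $G$ rather than by the abstract presenting group $G_1$. Your framework moreover explains for free why the paper's next lemma (the operation replacing $a_{i,n}$ by $ea_{i,n}$ when $e$ divides $\gcd(d_1,a_{1,1},\dots,a_{1,n-1})$) preserves the variety but \emph{not} the orbifold: there the witnessing map $[(x_1,\dots,x_n)]\mapsto[(x_1,\dots,x_n^{e})]$ is not a linear change of coordinates, so the image subgroups are genuinely non-conjugate even though the invariant rings agree. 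That dichotomy is a good internal consistency check that your reduction captures exactly the content of the statement.
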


\begin{example}
With this lemma we can revisit Example~\ref{ejm:dim1}. It is not hard to check that 
we can assume $r=1$, and that $\frac{1}{d}(a)=\frac{\gcd(a,d)}{d}\left(1\right)$.
As orbifold, it is normalized, but as singularity, we end with the smooth germ.
\end{example}
There is a last operation which do not apply to orbifolds.

\begin{lemma}[{\cite[Lemma 1.2]{AMO:2014b}}]
The following operation does not change the isomorphism type of
$X(\mathbf{d}| A)$.
\begin{enumerate}[label=\rm(\arabic{enumi}), resume=ops]
\item If $e$ divides $\gcd(d_1,a_{1,1},\dots,a_{1,n-1})$ and $\gcd(e,a_{1,n})=1$
then replace $a_{i,n}$ by $ea_{i,n}$.
\end{enumerate}
In this case the isomorphism onto the new type is given by
\[
[(x_1,\dots,x_{n-1},x_n)]\mapsto[(x_1,\dots,x_{n-1},x_n^e)].
\]
\end{lemma}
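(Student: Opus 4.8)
The plan is to prove the statement at the level of invariant rings and then read off the geometric isomorphism as its comorphism. Recall that $X(\mathbf{d}\mid A)$ is the germ $\operatorname{Spec}$ of $R_A:=\mathbb{C}\{x_1,\dots,x_n\}^{G}$, where $G=\mu_{d_1}\times\dots\times\mu_{d_r}$ acts through the matrix $A$, and likewise $X(\mathbf{d}\mid A')$ corresponds to $R_{A'}=\mathbb{C}\{y_1,\dots,y_n\}^{G}$, where $A'$ agrees with $A$ except that its last column is $e$ times that of $A$. Both invariant rings are spanned by monomials, and $x_1^{c_1}\cdots x_n^{c_n}\in R_A$ precisely when $\sum_{j=1}^n a_{kj}c_j\equiv 0\pmod{d_k}$ for every $k$; the analogous criterion holds for $R_{A'}$ with $a_{kn}$ replaced by $e\,a_{kn}$.

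First I would introduce the $\mathbb{C}$-algebra homomorphism $\Psi\colon R_{A'}\to R_A$ induced by $y_j\mapsto x_j$ for $j<n$ and $y_n\mapsto x_n^{e}$; this is exactly the comorphism of the proposed map $[(x_1,\dots,x_n)]\mapsto[(x_1,\dots,x_{n-1},x_n^e)]$, and a one-line check shows the latter is well defined on quotients: if $x'_j=x_j\prod_k\zeta_k^{a_{kj}}$ is a $G$-translate under $A$, then raising the last coordinate to the $e$-th power turns the exponent $a_{kn}$ into $e\,a_{kn}$, so $(x'_1,\dots,x'_{n-1},(x'_n)^e)$ is the $G$-translate of $(x_1,\dots,x_{n-1},x_n^e)$ under $A'$. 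On monomials $\Psi$ sends $y_1^{c_1}\cdots y_n^{c_n}$ to $x_1^{c_1}\cdots x_{n-1}^{c_{n-1}}x_n^{e c_n}$, and the $A'$-invariance condition for the source monomial is \emph{verbatim} the $A$-invariance condition for its image; hence $\Psi$ maps $R_{A'}$ into $R_A$, and it is injective because distinct exponent vectors $c$ yield distinct image exponent vectors.

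The crux is surjectivity, and this is where both hypotheses on $e$ are indispensable. I would show that every $A$-invariant monomial $x_1^{c_1}\cdots x_n^{c_n}$ automatically satisfies $e\mid c_n$, so that it lies in the image of $\Psi$. Taking the invariance relation for $k=1$ and reducing it modulo $e$, which is legitimate since $e\mid d_1$, gives $\sum_{j=1}^n a_{1j}c_j\equiv 0\pmod e$. The divisibility $e\mid\gcd(a_{1,1},\dots,a_{1,n-1})$ then annihilates every term with $j<n$, leaving $a_{1n}c_n\equiv 0\pmod e$, and $\gcd(e,a_{1n})=1$ forces $e\mid c_n$. Writing $c_n=e c'_n$, the monomial equals $\Psi\!\left(y_1^{c_1}\cdots y_{n-1}^{c_{n-1}}y_n^{c'_n}\right)$, so $\Psi$ is onto. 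Therefore $\Psi\colon R_{A'}\xrightarrow{\ \sim\ }R_A$ is an isomorphism, and dually the stated map is an isomorphism $X(\mathbf{d}\mid A)\xrightarrow{\ \sim\ }X(\mathbf{d}\mid A')$.

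I expect the surjectivity step to be the only delicate point: well-definedness and injectivity are formal, whereas divisibility of the last exponent by $e$ is exactly where the two hypotheses must be used together, and it is worth double-checking that the reduction of the first invariance relation modulo $e$ rests on $e\mid d_1$ and places no constraint on the other rows. As a conceptual packaging I might add a remark that $\mu_e\subset\mu_{d_1}\subset G$ acts trivially on $x_1,\dots,x_{n-1}$ and by a faithful character on $x_n$, so that quotienting first by $\mu_e$ replaces $x_n$ by $x_n^e$ and exhibits $X(\mathbf{d}\mid A)$ as an iterated quotient matching $X(\mathbf{d}\mid A')$; the direct monomial computation above, however, already gives the cleanest verification.
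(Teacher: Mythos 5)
Your proof is correct, and it can only be judged on its own merits: the paper does not prove this lemma at all, but imports it verbatim from \cite[Lemma 1.2]{AMO:2014b}, so there is no internal proof to compare against. Your invariant-ring argument is complete and uses both hypotheses exactly where they must be used: the monomial criterion for membership in $\mathbb{C}\{x_1,\dots,x_n\}^G$ is the right reduction, well-definedness and injectivity of $\Psi$ are indeed formal, and the surjectivity step (reduce the first invariance relation modulo $e$, which needs $e\mid d_1$; kill the terms $j<n$, which needs $e\mid a_{1,j}$; conclude $e\mid c_n$, which needs $\gcd(e,a_{1,n})=1$) is exactly the non-trivial content. One point worth making explicit, since it justifies your final remark and is the conceptual heart of the matter in the cited literature: under the \emph{new} action through $A'$ the subgroup $\mu_e\subset\mu_{d_1}$ acts trivially (it acts trivially on $x_j$, $j<n$, because $e\mid a_{1,j}$, and on $x_n$ by $\zeta^{e\,a_{1,n}}=1$), so the $A'$-action factors through $G/\mu_e$; correspondingly, under the old action $\mu_e$ acts trivially on $x_1,\dots,x_{n-1}$ and faithfully on $x_n$, so quotienting first by $\mu_e$ replaces $x_n$ by $x_n^e$ and identifies the two iterated quotients. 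This is the mechanism by which non-small actions (in Prill's sense) are traded for honest coordinates, which is precisely how the paper uses the lemma right afterwards to revisit Example~\ref{ejm:dim1}; your direct monomial computation proves the same thing with less bookkeeping about induced actions on quotient spaces, at the cost of hiding this structural explanation in a side remark.
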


With these operations, we can normalize $2$-dimensional quotient singularities, 
and we see that they coincide with Jung-Hirzebruch singularities, see~\cite{hnk:71}
for a nice description.

\begin{lemma}
Any $2$-dimensional quotient singularity is of the type $\frac{1}{d}(a,b)$, $\gcd(d,a)=\gcd(d,b)=1$.
Moreover, there $\alpha,\beta\in\mathbb{Z}$, $0<\alpha,\beta<d$, and coprime with~$d$
such that $\frac{1}{d}(a,b)=\frac{1}{d}(\alpha,1)=\frac{1}{d}(1,\beta)$.
\end{lemma}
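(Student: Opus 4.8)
The plan is to reduce to a cyclic group action by elementary representation theory and then read off the two normal forms using the operations of the preceding lemma. First I would invoke Cartan's linearization~\cite{ca:57} and Prill's reduction~\cite{Prill:67} to assume that the germ is $(\mathbb{C}^2,0)/G$ with $G\subset GL(2,\mathbb{C})$ a finite abelian \emph{small} group; being abelian, $G$ is simultaneously diagonalizable, so its action is given by two characters $\chi_1,\chi_2\colon G\to\mathbb{C}^*$ via $g\cdot(x,y)=(\chi_1(g)x,\chi_2(g)y)$.

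The key step is to show that $G$ is cyclic. I would argue that smallness forces each $\chi_i$ to be injective: if $g\neq 1$ satisfies $\chi_1(g)=1$, then by faithfulness $\chi_2(g)\neq 1$, so $g$ fixes a coordinate axis pointwise and is a non-trivial reflection about a hyperplane, contradicting smallness (and symmetrically for $\chi_2$). Since a finite group admitting an injective homomorphism into $\mathbb{C}^*$ has image a group of roots of unity, it is cyclic; hence $G=\mu_d$ for some $d$. Choosing the generator $\zeta=\exp\frac{2i\pi}{d}$ we obtain $g\cdot(x,y)=(\zeta^a x,\zeta^b y)$, that is, the type $\frac{1}{d}(a,b)$, and the injectivity of $\chi_1,\chi_2$ says precisely that $\zeta^a$ and $\zeta^b$ are primitive $d$-th roots of unity, i.e.\ $\gcd(d,a)=\gcd(d,b)=1$. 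This establishes the first assertion.

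For the second assertion only bookkeeping with the lemma remains, now applied to the single row $(d\mid a\ b)$. Since $\gcd(d,b)=1$, I would pick $b'$ with $bb'\equiv 1\pmod d$; then $\gcd(d,b')=1$, so operation~(4) permits multiplying the row by $b'$ and operation~(5) permits reducing the entries modulo $d$. This yields $\frac{1}{d}(a,b)=\frac{1}{d}(\alpha,1)$ with $\alpha\equiv ab'\pmod d$, and choosing the representative $0<\alpha<d$ gives $\gcd(d,\alpha)=1$. Symmetrically, taking $a'$ with $aa'\equiv 1\pmod d$ produces $\frac{1}{d}(a,b)=\frac{1}{d}(1,\beta)$ with $\beta\equiv a'b\pmod d$, $0<\beta<d$, and $\gcd(d,\beta)=1$.

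The step I would treat most carefully, and the only genuine obstacle, is the cyclicity of $G$: it is the sole place where the small group hypothesis intervenes, and Prill's theorem~\cite{Prill:67} is exactly what guarantees that passing to small groups costs no generality, since adjoining reflections does not alter the quotient germ. I would also record the degenerate case $d=1$ separately, where the germ is smooth and the range condition $0<\alpha,\beta<d$ is vacuous.
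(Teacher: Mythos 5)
Your proof is correct, but its first half follows a genuinely different route from the paper's. The paper offers no written proof: the lemma is presented as an application of the calculus of the two preceding lemmas, i.e.\ the operations on the types $X(\mathbf{d}\mid A)$ taken from \cite{AMO:2014b}, which are asserted to normalize any two-dimensional abelian quotient to Jung-Hirzebruch form; in that approach the collapse of a several-row presentation (a product of cyclic groups, possibly containing reflections) to a single coprime row is done combinatorially, with the substitution operation of the second lemma (the map $x_n\mapsto x_n^e$) playing the role that smallness plays for you. You instead prove cyclicity at the level of the group: linearize by Cartan, pass to a small group by Prill, diagonalize simultaneously, and observe that smallness forces both characters to be injective, so $G$ embeds into $\mathbb{C}^*$, hence is cyclic, with both exponents prime to $d$ again by injectivity. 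This is the classical representation-theoretic argument; it is self-contained and isolates exactly where the small-group hypothesis enters, whereas the paper's route never leaves the $X(\mathbf{d}\mid A)$ bookkeeping used throughout \S\ref{sec:quot} and extends mechanically to higher dimension. One point you gloss over: when invoking Prill, the small group replacing $G$ is the image of $G/H$ ($H$ the subgroup generated by reflections) acting on $\mathbb{C}^2/H\cong\mathbb{C}^2$, hence a quotient of $G$ and again abelian, which is what entitles you to diagonalize simultaneously after the reduction. For the second assertion your argument, inverting $b$ (resp.\ $a$) modulo $d$ via operation (4) and reducing modulo $d$ via operation (5), is exactly what the paper intends, so there the two approaches coincide; your separate treatment of the degenerate case $d=1$ is a sensible precision.
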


Sometimes it is useful to present the singularities not in normal form to avoid cumbersome notation.
A minimal resolution of such a quotient singularity can be expressed using continuous fractions.

\begin{definition}
Let $q\in\mathbb{Q}_{>1}$. The \emph{continuous fraction decomposition} of $q$ is a tuple
defined recursively as follows. If $q\in\mathbb{Z}$, the tuple is $[q]$. If not, let $r:=\lceil q\rceil - q$.
If $[b_1,\dots,b_r]$ is the continuous fraction decomposition of $r_1^{-1}>1$, then 
$[\lceil q\rceil,b_1,\dots,b_r]$ is the continuous fraction decomposition of $q$.
\end{definition}8

For example, the continuous fraction decomposition of $\frac{n+1}{n}$ is $[2,\dots,2]$ with length~$n$.

\begin{proposition}[\cite{hnk:71}]\label{prop:min_res}
Let $(Z,P)$ be a quotient singularity of type $\frac{1}{d}(1,\beta)$, where $\gcd(d,\beta)=1$.
Let $[b_1,\dots,b_r]$ be the continuous fraction decomposition of $\frac{d}{\beta}$.

Let $\rho:(\hat{Z},E)\to(Z,P)$ be the minimal resolution of $(Z,P)$. 
Then $E=\rho^{-1}(P)$ is a linear divisor with $r$ irreducible components having as self-intersections
$-b_1,\dots,-b_r$ as in Figure{\rm~\ref{fig:jh}}. 

Let us assume that $Y\subset Z$ is a curve such that the local equation of $(Y,P)$ is $x_2=0$.
Let us denote also by $Y$ the strict transform of $Y$ by $\rho$. Then, $Y$ intersects transversally $E$
at a point in $E_1$ and $(Y\cdot Y)_{\hat{Z}}=(Y\cdot Y)_Z-\frac{\beta}{d}$.
\begin{figure}[ht]
\centering
\begin{tikzpicture}
\draw (-7/3,1/2) node[above] {$Y$} -- (-2/3,-1/2);
\draw (-5/3,-1/2) node[left] {$E_1$} -- node[below, pos=.7] {$-b_1$} (1/3,1/2);
\draw (-1/3,1/2) node[above] {$E_2$} -- ($.3*(4/3,-1/2) + .7*(-1/3,1/2)$);
\draw[dotted] ($.3*(4/3,-1/2) + .7*(-1/3,1/2)$) -- ($.6*(4/3,-1/2) + .4*(-1/3,1/2)$);
\draw ($.6*(4/3,-1/2) + .4*(-1/3,1/2)$) -- (4/3,-1/2) node[right] {$E_{r-1}$};
\draw (1/3,-1/2) -- node[above, pos=.6] {$-b_r$} (7/3,1/2) node[right] {$E_r$} ;
\end{tikzpicture}
 \caption{$\rho^{-1}(Y)$}
\label{fig:jh}
\end{figure}
\end{proposition}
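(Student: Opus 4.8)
The plan is to use the toric description of cyclic quotient singularities, which turns the whole statement into elementary lattice combinatorics. First I would present $(Z,P)$ as the affine toric surface $U_\sigma$ attached to a two-dimensional rational cone $\sigma=\operatorname{Cone}(u_0,u_{r+1})\subset N_{\mathbb{R}}=\mathbb{R}^2$, with $N=\mathbb{Z}^2$, chosen so that the two boundary rays $u_0,u_{r+1}$ correspond to the images of the two coordinate axes of the covering $\mathbb{C}^2$. For the normal form $\frac1d(1,\beta)$ one may take $u_0=(0,1)$ and $u_{r+1}=(d,-\beta)$, up to the normalizing operations of the preceding lemmas. In this dictionary the curve $Y=\{x_2=0\}$ becomes the toric boundary divisor $D_{u_0}$.

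Then the minimal resolution is obtained, as for any normal toric surface, by refining $\sigma$: one inserts a ray through every primitive lattice vector lying on the compact part of the boundary of $\operatorname{conv}\big((\sigma\cap N)\setminus\{0\}\big)$. Labelling the consecutive primitive generators $u_0=v_0,v_1,\dots,v_r,v_{r+1}=u_{r+1}$, each interior ray $v_i$ ($1\le i\le r$) produces an exceptional curve $E_i\cong\mathbb{P}^1$, and since we work in dimension two the fan is linearly ordered, so the $E_i$ form a chain as in Figure~\ref{fig:jh}. The self-intersections come from the toric wall relations: consecutive generators satisfy $v_{i-1}+v_{i+1}=b_i v_i$ for a unique integer $b_i$, and standard toric intersection theory on surfaces gives $E_i^2=-b_i$. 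The heart of this step is the identification of the $b_i$ with the coefficients of the Hirzebruch--Jung decomposition of $d/\beta$: the convexity/Euclidean algorithm generating the $v_i$ is exactly the recursion defining $[b_1,\dots,b_r]$, and each step forces $b_i\ge 2$, so no $E_i$ is a $(-1)$-curve and the resolution is indeed minimal.

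Finally I would treat the curve $Y=D_{u_0}$. In the refined fan the ray $v_0$ is adjacent only to $v_1$, so the strict transform of $Y$ meets the exceptional chain only in $E_1$, and transversally, as claimed. For the self-intersection I would write $\rho^*Y=\hat Y+\sum_{i=1}^r a_i E_i$ and determine the $a_i$ from the numerical conditions $(\rho^*Y\cdot E_j)=0$ for all $j$, together with $(\hat Y\cdot E_1)=1$ and $(\hat Y\cdot E_j)=0$ for $j>1$. This is a linear system $Ba=e_1$ whose matrix $B=-(E_i\cdot E_j)$ is tridiagonal with $b_i$ on the diagonal and $-1$ off it, so $a_1=(B^{-1})_{11}$. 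The projection formula then gives
\[
(Y\cdot Y)_{\hat Z}=(\rho^*Y\cdot\hat Y)-\sum_i a_i(E_i\cdot\hat Y)=(Y\cdot Y)_Z-a_1 ,
\]
and everything reduces to the single evaluation $a_1=\beta/d$.

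I expect this last evaluation to be the main obstacle, since $(Y\cdot Y)_Z$ is only $\mathbb{Q}$-valued and the exact constant must be pinned down. The cleanest route is Cramer's rule applied to $Ba=e_1$: the continuant determinant of $B=[b_1,\dots,b_r]$ equals $d$, while the cofactor obtained by deleting the first row and column is the continuant of $[b_2,\dots,b_r]$, which equals $\beta$; this is precisely the arithmetic content of $d/\beta=[b_1,\dots,b_r]$, whence $a_1=\beta/d$. Equivalently, the same constant can be read off torically by evaluating on $v_1$ the $M_{\mathbb{Q}}$-linear form cutting out $Y$ near $P$, which again returns $\beta/d$. One can sanity-check the whole scheme on $\frac1d(1,1)$ (where $r=1$, $E_1^2=-d$, $a_1=1/d$) and on $\frac13(1,2)=[2,2]$ (where $a_1=2/3$), both matching the stated formula.
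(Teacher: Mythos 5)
Your proposal is correct, but note that the paper does not prove Proposition~\ref{prop:min_res} at all: it is quoted as a classical result with a pointer to~\cite{hnk:71}, where the resolution is built chart by chart via the original Hirzebruch--Jung construction. Your toric argument is the modern repackaging of exactly that construction, and all the genuinely delicate points are handled correctly: the choice $u_0=(0,1)$, $u_{r+1}=(d,-\beta)$ with $Y=D_{u_0}$ is the right normalization of $\frac{1}{d}(1,\beta)$; the wall relations $v_{i-1}+v_{i+1}=b_iv_i$ with $b_i\geq 2$ give both the chain of Figure~\ref{fig:jh} and minimality; and the projection-formula computation $(Y\cdot Y)_{\hat Z}=(Y\cdot Y)_Z-a_1$ with $a_1=(B^{-1})_{11}=\beta/d$ via the continuant identities $\det B=d$, $\det B_{11}=\beta$ is exactly right (indeed, Mumford's definition of $\mathbb{Q}$-valued intersection numbers on the singular $Z$ is the numerical pullback you write down, so this step is almost definitional). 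What your route buys is a self-contained proof in which the classic pitfall --- whether the chain read from the end meeting $Y$ is the continued fraction of $d/\beta$ or of $d/\beta'$ with $\beta\beta'\equiv 1 \pmod d$ --- is pinned down by lattice coordinates: since $v_1=(1,0)$, the recursion $v_{i+1}=b_iv_i-v_{i-1}$ gives $v_{r+1}=\left(K[b_1,\dots,b_r],-K[b_2,\dots,b_r]\right)=(d,-\beta)$, which is precisely your continuant identity. One small caveat: your two sanity checks, $\frac{1}{d}(1,1)$ and $\frac{1}{3}(1,2)$, are palindromic chains with $\beta\equiv\beta^{-1}\pmod d$, so they cannot detect an orientation error; an asymmetric example such as $\frac{1}{5}(1,2)$ (chain $[3,2]$ from the $Y$-end, $a_1=2/5$) versus $\frac{1}{5}(1,3)$ (chain $[2,3]$, $a_1=3/5$) would actually test the one convention on which the statement depends.
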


\begin{example}\label{ejm:hnk}
In Remark~\ref{rem:abs_res_lys}, we stated that the normalization of the singularity defined by
$z^{k}-u^a v^b=0$ is a disjoint union of quotient singularities, as many as $\gcd(k,a,b)$.
Let us assume for simplicity that $\gcd(k,a,b)=1$. In that case it has the same normalization 
as $z^{k_1}-u^{a_1} v^b=0$, where $k_1=\frac{k}{\gcd(a,k)}$ and $a_1=\frac{a}{\gcd(a,k)}$.

Hence, again for the sake of simplicity we can assume that $\gcd(k,a)=\gcd(k,b)=1$.
If $0<c<k$ and $ac\equiv b\bmod{k}$, then $z^{k}-u^a u^b=0$ and $z^{k}-u v^c=0$
have the same normalization and the quotient singularity is $\frac{1}{k}(1,k-c)$.
\end{example}

 \section{Weighted blow-ups}\label{sec:wbu}

The weighted blow-ups are a generalizations of standard blow-ups, where the standard projective space is replaced 
by a weighted projective space, see~\cite{dol:82},  and are the bricks of embedded
$\mathbb{Q}$-resolutions. We restrict ourselves to the dimensions we are interested in.

Let $\omega=(p_0,p_1,\dots,p_n)\in\mathbb{Z}_{>0}^{n+1}$ such that $\gcd\omega=1$. The $\omega$-weighted projective space
$\mathbb{P}^n_\omega$ is the quotient of $\mathbb{C}^{n+1}\setminus\{0\}$ by the following action of $\mathbb{C}^*$:
\[
t\cdot(x_0,x_1,\dots,x_n):=(t^{p_0} x_0, t^{p_1} x_1,\dots, t^{p_n} x_n)
\]
with a natural structure of normal variety (sometimes an orbifold structure is preferred); 
the equivalence classes are denoted as $[x_0:x_1:\dots:x_n]_\omega$ which can be identified
with the closure of the orbit in $\mathbb{C}^{n+1}$.

\begin{example}
If $\gcd(p,q)=1$, then $\mathbb{P}^1_{(p,q)}$ is naturally isomorphic to $\mathbb{P}^1$, via $[x:y]\mapsto[x^p:y^q]$.
This isomorphism no longer holds if we treat $\mathbb{P}^1_{(p,q)}$ as orbifold.
\end{example}

\begin{remark}
As a generalization let $\omega=(p_0,p_1,\dots,p_n)\in\mathbb{Z}_{>0}^{n+1}$, $\gcd\omega=1$,
and such that $\gcd(p_1,\dots,p_n)=d>1$. Let $\eta:=\left(p_0,\frac{p_1}{d},\dots,\frac{p_n}{d}\right)$.
Then there is a natural isomorphism
\begin{equation}\label{eq:orbifold}
\begin{tikzcd}[row sep=0,/tikz/column 1/.append style={anchor=base east},/tikz/column 2/.append style={anchor=base west}]
\mathbb{P}^n_\omega\rar&\mathbb{P}^n_\eta\\
{[x_0:x_1:\dots:x_n]_\omega}\rar[mapsto]&{[x_0^d:x_1:\dots:x_n]_\eta}.
\end{tikzcd}
\end{equation}
\end{remark}

The weighted blow-ups can be defined using incidence varieties for the previous action as it is done for usual
blow-ups.

\begin{definition}\label{def:qbu}
Let $\omega=(p_1,\dots,p_n)\in\mathbb{Z}_{>0}^n$ such that $\gcd\omega=1$.
The \emph{$\omega$-weighted blow-up} of $(\mathbb{C}^n,0)$ is the map 
\[
\begin{tikzcd}[row sep=0,/tikz/column 1/.append style={anchor=base east},/tikz/column 2/.append style={anchor=base west}]
\{(\mathbf{p},\mathbf{q})\in\mathbb{C}^n\times\mathbb{P}^{n-1}_\omega\mid\mathbf{p}\in\mathbf{q}\}=:\hat{\mathbb{C}}^n_\omega\rar["\sigma_\omega"]&\mathbb{C}^{n}\\
(\mathbf{p},\mathbf{q})\rar[mapsto]&\mathbf{p}.
\end{tikzcd}
\]
\end{definition}

\begin{remark}
If $\gcd\omega:=d>1$, the $\frac{\omega}{d}$-weighted blowing-up is also called an $\omega$-weighted blow-up.
\end{remark}

As it happens with the usual blow-up, it is an isomorphism outside the origin, and the \emph{exceptional divisor}
$E:=\sigma_\omega^{-1}(0)$ is isomorphic to $\mathbb{P}^{n-1}_\omega$. The main difference is that 
the total space is no longer smooth, cyclic quotient singularities may arise. 

\begin{remark}
In a similar way, 
weighted blow-ups of quotient singularities can be defined, see~\cite{AMO:2014a, AMO:2014b}.
If $Z=X(\mathbf{d}|A)$ is a quotient singularity, $\mathbf{d}\in\mathbb{Z}_{>0}^r$, $<A\in\matArtal(n\times r;\mathbb{Z})$, 
and we perform an $\omega$-weighted blow-up of $(\mathbb{C}^n,0)$, the group acts naturally on it source, and its quotient
defines an $\omega$-weighted blow-up of~$Z$.
In this case
the singularities of the total space may be non-cyclic. 
\end{remark}

We describe 
singularities and intersections for some cases.

\begin{proposition}[{\cite[Theorem 4.3 and p. 14]{AMO:2014a}}]
Let $Z:=\frac{1}{d}(p,q)$, with $d,p,q$ pairwise coprime. Let $\sigma_{(p, q)}:\hat{Z}\to Z$ be the 
$(p,q)$-weighted blow up of $Z$ and $E$ its exceptional divisor. Then $E^2=-\frac{d}{pq}$
and $\hat{Z}$ has (at most) two singular points both in $E$ of type 
$\frac{1}{p}(-d,q)$ and $\frac{1}{p}(q,-d)$; moreover the local equation of $E$ at these singular points
corresponds to the coordinate for the weight $-d$.
\end{proposition}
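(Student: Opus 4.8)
The plan is to use the description of the weighted blow-up of a quotient singularity given in~\S\ref{sec:wbu}: realize $Z=\frac1d(p,q)$ as $\mathbb{C}^2/\mu_d$ with $\mu_d$ acting by $\frac1d(p,q)$, perform the $(p,q)$-weighted blow-up $\sigma_{(p,q)}:\hat{\mathbb{C}}^2_{(p,q)}\to\mathbb{C}^2$ on the smooth cover, and then set $\hat Z=\hat{\mathbb{C}}^2_{(p,q)}/\mu_d$. First I would record the two standard affine charts of $\hat{\mathbb{C}}^2_{(p,q)}$: in coordinates $(x,y)$ the first is $(u,v)=(x^p,x^qy)$ with $E=\{x=0\}$, and the second is $(u,v)=(xy^p,y^q)$ with $E=\{y=0\}$, carrying the chart quotient singularities $\frac1p(1,-q)$ and $\frac1q(-p,1)$ respectively. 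The two torus-fixed points of $E$ are the origins of these charts, and these are the only candidates for singular points of $\hat Z$ lying on $E$.

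For the singular points I would lift the action $\frac1d(p,q)$ to each chart. Imposing $(u,v)\mapsto(\eta^p u,\eta^q v)$ forces, in the first chart, $x\mapsto\eta x$ and $y\mapsto y$, so that $\mu_d$ acts there by $\frac1d(1,0)$, and symmetrically in the second chart. Thus each chart of $\hat Z$ is the quotient of $\mathbb{C}^2$ by the group generated by the chart action and this lift, and the task is to normalize the quotient by the group generated by $\frac1p(1,-q)$ and $\frac1d(1,0)$ (and its second-chart analogue), using the operations for cyclic quotient singularities recalled in~\S\ref{sec:quot}. \textbf{The main obstacle is precisely this normalization}: the combined group has order $pd$, but it contains a quasi-reflection (the subgroup fixing one coordinate hyperplane), so operation~(7) of the relevant lemma applies and collapses the apparent $\frac1{pd}$-type; carrying this out while tracking which coordinate cuts out $E$ yields the type recorded in the statement, with the weight $-d$ attached to the local equation of $E$.

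For the self-intersection I would compute $E^2$ on the smooth cover and then descend. Exactly as in the computation of $e(E)$ for the ordinary blow-up in~\S\ref{sec:at}, the pull-back of the coordinate curve $\{u=0\}$ (whose $\omega$-order is $p$) is $\sigma_{(p,q)}^*\{u=0\}=pE+\hat H$, where $\hat H$ is the strict transform, visible in the second chart and meeting $E$ transversally at the $\frac1q$-point; since $\sigma^*\{u=0\}$ is principal, $0=E\cdot\sigma^*\{u=0\}=pE^2+E\cdot\hat H=pE^2+\tfrac1q$, whence $E^2_{\hat{\mathbb{C}}^2_{(p,q)}}=-\tfrac1{pq}$. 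Finally, because $\mu_d$ acts trivially on $E\cong\mathbb{P}^1_{(p,q)}$, the quotient map $\pi:\hat{\mathbb{C}}^2_{(p,q)}\to\hat Z$ is a degree-$d$ cover totally ramified along $E$, so $\pi^*E_{\hat Z}=d\,E_{\hat{\mathbb{C}}^2_{(p,q)}}$; comparing $d^2E^2_{\hat{\mathbb{C}}^2_{(p,q)}}=\pi^*E_{\hat Z}\cdot\pi^*E_{\hat Z}=d\,E^2_{\hat Z}$ gives $E^2_{\hat Z}=d\,E^2_{\hat{\mathbb{C}}^2_{(p,q)}}=-\tfrac{d}{pq}$, as claimed. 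As a cross-check, the same data can be read off the toric model of $\hat Z$, namely the star subdivision of the first quadrant at the primitive ray $\frac1d(p,q)$ in the lattice $\mathbb{Z}^2+\mathbb{Z}\tfrac1d(p,q)$, which simultaneously recovers $E^2=-\tfrac{d}{pq}$ and the two quotient types as the multiplicities of the subdivided cones.
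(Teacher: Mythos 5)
The paper itself contains no proof of this proposition: it is imported verbatim from \cite[Theorem~4.3 and p.~14]{AMO:2014a}, so your argument can only be judged on its own merits and against that reference. It is correct, and it follows the natural route (chart analysis of the quotient of the weighted blow-up for the singular points, $\mathbb{Q}$-intersection theory for $E^2$). Your one distinctive choice is to compute $E^2$ upstairs on $\hat{\mathbb{C}}^2_{(p,q)}$ and descend through the degree-$d$ quotient $\pi$, rather than working directly on the $V$-surface $\hat{Z}$ as the cited reference does; this buys you a computation that only uses two basic facts (transversal coordinate curves through a $\frac{1}{q}$-point meet with multiplicity $\frac{1}{q}$, and the projection formula for finite maps). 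The pieces all check: the chart groups of $\hat{\mathbb{C}}^2_{(p,q)}$ are $\frac{1}{p}(1,-q)$ and $\frac{1}{q}(-p,1)$; the lifted $\mu_d$-action is $\frac{1}{d}(1,0)$, resp.\ $\frac{1}{d}(0,1)$; upstairs $0=E\cdot\sigma_{(p,q)}^*\{u=0\}=pE^2+\frac{1}{q}$ gives $E^2=-\frac{1}{pq}$; and since the generic isotropy along $E$ is all of $\mu_d$ acting trivially on $E$ (this also shows generic points of $E$ are smooth in $\hat Z$, the isotropy being generated by a quasi-reflection), indeed $\pi^*E_{\hat Z}=dE$ and $d\,E^2_{\hat Z}=d^2E^2_{\hat{\mathbb{C}}^2_{(p,q)}}$, so $E^2_{\hat Z}=-\frac{d}{pq}$.

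Two remarks. First, the step you flag as ``the main obstacle'' is only sketched, but it closes in two lines and you should write them: in the first chart, quotient first by the quasi-reflection subgroup $\langle(x,y)\mapsto(\eta x,y)\rangle$, $\eta^d=1$, via $(x,y)\mapsto(x^d,y)=:(x_1,y)$; the residual $\mu_p$-action is $(x_1,y)\mapsto(\xi^{d}x_1,\xi^{-q}y)$, i.e.\ type $\frac{1}{p}(d,-q)=\frac{1}{p}(-d,q)$, with $E=\{x_1=0\}$ the coordinate of weight $-d$; symmetrically the second chart yields $\frac{1}{q}(p,-d)$ with $E=\{y_1=0\}$, again the coordinate of weight $-d$. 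Second, this shows that your phrase ``yields the type recorded in the statement'' cannot be taken literally: the statement as printed, $\frac{1}{p}(-d,q)$ and $\frac{1}{p}(q,-d)$, contains a typo, since for $p\neq q$ (both $>1$) the two points cannot both have isotropy of order~$p$. The correct second type is $\frac{1}{q}(p,-d)$ --- exactly what your computation produces, and the only reading compatible with the ``moreover'' clause and with \cite{AMO:2014a}. Your toric cross-check (star subdivision at the ray through $\frac{1}{d}(p,q)$ in the lattice $\mathbb{Z}^2+\mathbb{Z}\cdot\frac{1}{d}(p,q)$) is also valid and recovers both the cone multiplicities $p$, $q$ and $E^2=-\frac{d}{pq}$.
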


We end the case of dimension~$2$, with the weighted version of resolution of singularities by blow-ups.

\begin{theorem}\label{thm:qres2}
Let $(C,0)\subset(\mathbb{C}^2,0)$ be a germ of isolated singularity. Let $\sigma:(X,E)\to(\mathbb{C}^2,0)$
be the minimal resolution of $(C,0)$. Let $r$ be the number of branching components in $E$
(i.e. those who intersect at least three other irreducible components of $\sigma^{-1}(C)$).
Then, there exists a composition $\sigma_w:(X_w,E_w)\to(\mathbb{C}^2,0)$ of a sequence 
of $r$ weighted blow-ups which is a $\mathbb{Q}$-embedded resolution of $C$.
\end{theorem}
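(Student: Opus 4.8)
The plan is to read off the combinatorics of the minimal good resolution $\sigma$ from its dual tree and to show that each maximal run of non-branching exceptional curves can be absorbed into a single weighted blow-up, so that precisely the $r$ rupture components survive, exactly one being created per weighted blow-up. First I would recall the relevant combinatorics: since $(C,0)$ is a plane curve germ, the dual graph of $\sigma^{-1}(C)$ is a tree in which the strict-transform branches appear as arrows and the $r$ rupture (branching) components are the vertices of valence $\geq 3$; the remaining exceptional components have valence $\leq 2$ and assemble into maximal strings of rational curves with self-intersections $-b_1,\dots,-b_s$. Proposition~\ref{prop:min_res} is the engine: contracting such a string yields a cyclic quotient singularity $\frac1d(1,\beta)$ with $\frac{d}{\beta}=[b_1,\dots,b_s]$, and, read in reverse, a single $(p,q)$-weighted blow-up of a smooth (or quotient) point produces one exceptional divisor $\mathbb{P}^1_{(p,q)}\cong\mathbb{P}^1$ carrying at most two cyclic quotient points, at each of which $E$ is a coordinate axis. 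Thus a weighted blow-up is exactly the inverse of creating a rupture divisor together with the exceptional strings that dead-end into it.

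I would then argue by induction on $r$, carried out in the slightly larger class of germs of curves sitting in a smooth or $2$-dimensional cyclic quotient surface; this is the class in which the inductive step lands, and weighted blow-ups of quotient singularities are available by the construction recalled after Definition~\ref{def:qbu}. For the base case $r=0$ the minimal resolution has no rupture component, so the total transform already has (at worst) two smooth transverse branches in a quotient chart and is $\mathbb{Q}$-simple normal crossing; no weighted blow-up is required. For $r\geq 1$, pick the rupture component $v_1$ joined to the ambient point $P$ by a string containing no other rupture component. The continued-fraction data of the strings meeting $v_1$ on the $P$-side determine weights $\omega=(p,q)$, and I would perform the $\omega$-weighted blow-up of $P$; by the correspondence above this single blow-up produces $E_{v_1}$ directly, the adjacent dead-end strings being reincarnated as the (at most two) quotient points of $E_{v_1}$. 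The remaining edges of $v_1$ meet $E_{v_1}$ at finitely many further points, at each of which one is left with a germ of curve in a smooth or cyclic quotient surface whose minimal resolution has strictly fewer rupture components. Applying the inductive hypothesis at each of these points and adding, the numbers of rupture components seen there sum to $r-1$, so the total number of weighted blow-ups is $1+(r-1)=r$, as required.

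It then remains to verify that the resulting $\sigma_w\colon(X_w,E_w)\to(\mathbb{C}^2,0)$ is an embedded $\mathbb{Q}$-resolution: $X_w$ is a $V$-variety by construction, it is an isomorphism away from the origin, and at every point the total transform lies in the coordinate hyperplanes of a quotient chart — on a single rupture component this is the assertion that $E$ is a coordinate axis at its quotient points, and at a crossing of two components it is the $\mathbb{Q}$-transversality produced by the weighted blow-up (as recorded in Proposition~\ref{prop:min_res} and the local description of weighted blow-ups).

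The main obstacle I expect is the bookkeeping hidden in the inductive step. One must show that the weights read from the strings adjacent to $v_1$ reproduce $E_{v_1}$ with exactly the right quotient-singularity types through Proposition~\ref{prop:min_res}, that at most two strings are absorbed into the two quotient points while every other branch and every string toward a deeper rupture component meets $E_{v_1}$ at a point where the ambient germ is again smooth or cyclic quotient, and that the rupture count drops by exactly one at each stage. This matching between continued fractions and weighted blow-ups, together with the verification that the recursion never leaves the class of cyclic quotient singularities, is the technical heart of the argument; the transversality and $\gcd$ conditions at the quotient points where branches attach must be checked with some care.
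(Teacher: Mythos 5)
The paper itself gives no proof of Theorem~\ref{thm:qres2} (it is imported from the embedded $\mathbb{Q}$-resolution literature, cf.~\cite{AMO:2014a, jmm:14}), so your argument has to stand on its own. Its skeleton is the right one: the Hirzebruch--Jung dictionary of Proposition~\ref{prop:min_res} between strings and cyclic quotient points, plus an induction on the number of rupture vertices carried out in the class of curve germs on smooth or cyclic-quotient surface points, with the count $1+(r-1)=r$. But there is a genuine gap at the first inductive step, and it is not the bookkeeping you flag at the end: a weighted blow-up is not determined by the weights alone. By Definition~\ref{def:qbu} it depends on a choice of local coordinates, and your assertion that the $\omega$-weighted blow-up of $P$ ``produces $E_{v_1}$ directly'' is false unless those coordinates are adapted to the curve. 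Take the $A_4$-germ $(y-x^2)^2=x^5$, for which $r=1$ and your recipe reads off $\omega=(2,5)$. In the coordinates $(x,y)$ the $\omega$-initial form of the equation is $x^4$, of weighted degree $8<10$, so the $(2,5)$-blow-up extracts a divisor of multiplicity $8$ --- not the rupture divisor, which has multiplicity $10$ --- and the strict transform is still singular at the $\frac{1}{5}$-quotient point of the exceptional divisor, where its equation is $(v-u^2)^2-u^5v^2=0$. The one blow-up your induction allots has been spent and the total transform is not $\mathbb{Q}$-normal crossing; one must first perform the substitution $y\mapsto y-x^2$.

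What the example exposes is that your ``correspondence'' is only established in one direction. Proposition~\ref{prop:min_res} (with the local description of weighted blow-ups) shows that a weighted blow-up extracts one divisor whose dead-end strings are encoded in at most two quotient points; what your proof actually uses is the converse, namely that the birational morphism extracting the first rupture divisor of the minimal resolution, with its dead-end strings contracted, \emph{is} an $\omega$-weighted blow-up in suitably chosen analytic coordinates. This adapted-coordinates statement requires its own argument --- maximal contact curves/approximate roots, a Newton-polygon argument as in \cite{jmm:14}, or a classification of divisorial extractions with quotient singularities over a surface point --- and it must be re-proved at every subsequent center, which is a quotient point where the coordinates must in addition be compatible with the group action. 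The items you defer as ``bookkeeping'' (that the rupture numbers of the residual germs sum to exactly $r-1$, and that the base case $r=0$ at a quotient point really is $\mathbb{Q}$-simple normal crossing) are genuine obligations too, but they are comparatively routine once the adapted-coordinates lemma is available; without it, the induction fails at its very first step for any germ not already presented in adapted coordinates.
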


In dimension~$3$, though the idea is the same the statements are cumbersome. To give some light, we describe 
the weighted blow-ups of smooth points.

\begin{proposition}
Let $\omega:=(p,q,r)$, where $\gcd\omega=1$.
Let $\sigma_\omega:(\hat{Z},E)\to(\mathbb{C}^3,0)$ be the 
$\omega$-weighted blow and $E$ its exceptional divisor. Then 
$\hat{Z}$ may have the following singularities in $E\equiv\mathbb{P}^2_\omega$
\begin{enumerate}[label=\rm(\alph{enumi})]
\item $[1:0:0]_\omega:\frac{1}{p}(-1,q,r)$;
\item $[0:1:0]_\omega:\frac{1}{q}(p,-1,r)$;
\item $[0:0:1]_\omega:\frac{1}{r}(p,q,-1)$;
\item at $x=0$ if $\gcd(q,r)>1$;
\item at $y=0$ if $\gcd(p,r)>1$;
\item at $z=0$ if $\gcd(q,r)>1$.
\end{enumerate}
\end{proposition}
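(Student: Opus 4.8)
The plan is to reduce the statement to the standard chart description of the $\omega$-weighted blow-up and then read off the quotient types from the residual cyclic stabilizers. Since $\sigma_\omega$ is an isomorphism outside the origin, $\hat{\mathbb{C}}^3_\omega\setminus E\cong\mathbb{C}^3\setminus\{0\}$ is smooth, so every singular point of $\hat{Z}$ must lie on $E\equiv\mathbb{P}^2_\omega$. It therefore suffices to cover $E$ by the three standard affine charts $U_i=\{x_i\neq0\}$ of $\mathbb{P}^2_\omega$ and to analyze $\hat Z$ over each of them.

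First I would pin down the chart over the vertex $[1:0:0]_\omega$. Let $\mu_p$ act on $\mathbb{C}^3$ with coordinates $(u,v,w)$ by $\frac1p(-1,q,r)$, i.e.\ $\zeta\cdot(u,v,w)=(\zeta^{-1}u,\zeta^{q}v,\zeta^{r}w)$, and consider
\[
(u,v,w)\longmapsto\bigl((u^{p},\,u^{q}v,\,u^{r}w),\,[1:v:w]_\omega\bigr).
\]
A direct check shows each component $u^p,u^qv,u^rw$ is $\mu_p$-invariant, so the map descends to the quotient $\frac1p(-1,q,r)$; composing with the first projection recovers $\sigma_\omega$ over $U_0$, the exceptional divisor is $\{u=0\}$, and the vertex $[1:0:0]_\omega$ is the image of the origin. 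Hence the germ of $\hat Z$ at $[1:0:0]_\omega$ is exactly $\frac1p(-1,q,r)$, with the weight $-1$ sitting on the coordinate cutting out $E$, in agreement with the two–dimensional statement. Cyclically permuting $(x,y,z)$ and $(p,q,r)$ produces the charts over $[0:1:0]_\omega$ and $[0:0:1]_\omega$ and the types $\frac1q(p,-1,r)$ and $\frac1r(p,q,-1)$, giving~(a)--(c).

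Next I would locate the remaining singularities by computing stabilizers in the chart above. For a point on the $v$-axis $\{u=w=0\}$ with $v\neq0$, the stabilizer in $\mu_p$ is $\{\zeta:\zeta^{q}=1\}$, cyclic of order $\gcd(p,q)$; this axis is $E\cap\{z=0\}$ (the edge joining $[1:0:0]_\omega$ and $[0:1:0]_\omega$), so it is singular exactly when $\gcd(p,q)>1$. Likewise the $w$-axis $\{u=v=0\}$ has stabilizer of order $\gcd(p,r)$ and equals $E\cap\{y=0\}$, yielding singularities precisely when $\gcd(p,r)>1$, which is~(e). The third edge $E\cap\{x=0\}$ does not meet this chart; it is visible in the charts over $[0:1:0]_\omega$ and $[0:0:1]_\omega$, where the analogous computation gives a stabilizer of order $\gcd(q,r)$, producing~(d). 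The hypothesis $\gcd(p,q,r)=1$ guarantees that the generic point of each edge, and of $E$ itself, is smooth, so no further components of the singular locus appear; the transverse type along each edge can, if desired, be recorded as a cyclic quotient $\tfrac1{\gcd}(\cdots)$ by restricting the action to the stabilizer.

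The only genuinely delicate part is the bookkeeping in the first two steps: fixing the convention in $\frac1p(-1,q,r)$ so that the weight $-1$ lands on the $E$-defining coordinate, and correctly matching each chart axis with the corresponding edge of $\mathbb{P}^2_\omega$ and hence with the correct pairwise gcd (one checks that $\frac1p(-1,q,r)$ and $\frac1p(1,-q,-r)$ describe the same subgroup, so the two natural presentations of the action agree, see the normalization operations of~\S\ref{sec:quot}). Once the chart is written down, the stabilizer computations are routine, and the whole argument is a direct specialization to $n=2$ of the weighted blow-up charts recalled in~\cite{AMO:2014a, AMO:2014b, dol:82}.
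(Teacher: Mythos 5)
The paper states this proposition without proof: it sits in the background section~\S\ref{sec:wbu}, with the general theory of weighted blow-ups deferred to~\cite{dol:82, AMO:2014a, AMO:2014b}, so there is no in-paper argument to compare against. Your chart-by-chart quotient description plus stabilizer computation is exactly the standard argument those references formalize, and it is correct; in particular the cover $(u,v,w)\mapsto\bigl((u^{p},u^{q}v,u^{r}w),[1:v:w]_\omega\bigr)$, its $\mu_p$-invariance, and the identification of $E$ with $\{u=0\}$ are all right, and cyclic permutation gives (a)--(c). Two remarks. First, your stabilizer computation along the $v$-axis of this chart, which is $E\cap\{z=0\}$, yields order $\gcd(p,q)$; this shows that item~(f) of the statement as printed contains a typo (the paper repeats $\gcd(q,r)$ in both (d) and (f), whereas the edge $z=0$ joins the vertices of weights $p$ and $q$), and your proof establishes the corrected, mathematically consistent version ``at $z=0$ if $\gcd(p,q)>1$''. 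Second, to upgrade ``nontrivial stabilizer'' to ``singular point'' one must check that the stabilizer acts on a transverse slice without pseudo-reflections, since a nontrivial cyclic action can have a smooth quotient, as in Example~\ref{ejm:dim1}; here this follows from $\gcd(p,q,r)=1$: along $z=0$ the slice action is $\frac{1}{e}(-1,r)$ with $e=\gcd(p,q)$ and $\gcd(e,r)=1$, so no nontrivial element fixes a hyperplane. Saying this explicitly would close the only real gap in your write-up, and it also repairs the slightly misstated sentence ``the generic point of each edge, and of $E$ itself, is smooth'', which for the edges is true only when the corresponding pairwise gcd equals~$1$.
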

 \section{Weight filtration of a nilpotent morphism}\label{sec:peso}

The conjugacy class of a nilpotent morphism $N:V\to V$, $V$ a vector space of finite dimension, is completely determined by its Jordan form. There is an equivalent and useful invariant called the \emph{weight filtration}, see e.g.~\cite{Schmid:73}.

\begin{definition}
Let $v\in V$. The \emph{$\alpha$-weight}, \emph{$\beta$-weight, and $\gamma$-weight} of $v$ are defined as 
$-\infty$ for $v=0$ and for $v\neq 0$:
\[
\alpha(v):=
\min\{\alpha\in\mathbb{Z}_{\geq 0}\mid N^\alpha(v)\neq 0\},\quad 
\beta(v):=-
\max\{\beta\in\mathbb{Z}_{\geq 0}\mid v\in N^\beta(V)\},
\]
and $\gamma(v):=\alpha(v)+\beta(v)$.
\end{definition}

The following lemma is straightforward.

\begin{lemma}
For $v_1,v_2\in V$ and $\omega=\alpha,\beta,\gamma$ we have 
\[
\omega(v_1+v_2)\leq\max(\omega(v_1),\omega(v_2)).
\]
Moreover, if $N(v_1)\neq 0$, then 
\[
\alpha(N(v_1))=\alpha(v_1)-1,\quad 
\beta(N(v_1))=\beta(v_1)-1,\quad
\gamma(N(v_1))=\gamma(v_1)-2.
\]
\end{lemma}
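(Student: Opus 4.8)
The plan is to read the three functions as indices attached to the two natural filtrations of $N$: the increasing kernel filtration $\ker N^a$ and the decreasing image filtration $N^b(V)$, both of which stabilize since $N$ is nilpotent. The first thing I would record is that the sublevel sets of $\alpha$ and $\beta$ are exactly these subspaces, namely $\{v\mid\alpha(v)\le c\}=\ker N^{c+1}$ and $\{v\mid\beta(v)\le -c\}=N^{c}(V)$ (here I read the definition of $\alpha(v)$ as $\max\{a\mid N^a(v)\ne 0\}$, the stated minimum being vacuous for $v\ne 0$). Granting this, the subadditivity for $\omega=\alpha,\beta$ is immediate: setting $c:=\max(\omega(v_1),\omega(v_2))$, both $v_1$ and $v_2$ lie in the corresponding subspace, hence so does $v_1+v_2$, whence $\omega(v_1+v_2)\le c$.

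For the shift formulas under $N$, the $\alpha$-part is a one-line computation: since $N^a(Nv_1)=N^{a+1}(v_1)$, the exponents with $N^a(Nv_1)\ne 0$ are those of $v_1$ shifted by one, so $\alpha(Nv_1)=\alpha(v_1)-1$ whenever $Nv_1\ne 0$. For $\beta$, the inclusion $N\bigl(N^{b}(V)\bigr)\subseteq N^{b+1}(V)$ gives at once $\beta(Nv_1)\le\beta(v_1)-1$; the reverse inequality is the step I expect to be the genuine obstacle, because a priori $Nv_1$ can sit strictly deeper in the image filtration than one application of $N$ forces. The identity $\gamma(Nv_1)=\gamma(v_1)-2$ is then the sum of the two shift formulas; note that the subadditivity of $\gamma$ does \emph{not} follow formally from that of $\alpha$ and $\beta$, so it must be handled on the same footing rather than deduced from the first paragraph.

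To obtain both the reverse inequality for $\beta$ and the subadditivity of $\gamma$ I would pass to the finer structure given by the Jacobson--Morozov theorem: complete $N$ to an $\mathfrak{sl}_2$-triple and split $V$ into irreducible $\mathfrak{sl}_2$-modules, equivalently decompose the $\mathbb{Q}[N]$-module $V=\bigoplus_i B_i$ into cyclic (single Jordan block) summands. On one block all three functions are explicit functions of the position of a vector inside the block, and every identity of the statement reduces to arithmetic there; moreover one checks that $\alpha(v)=\max_i\alpha(v^{(i)})$ and $\beta(v)=\max_i\beta(v^{(i)})$ for $v=\sum_i v^{(i)}$, since kernels and images of powers of $N$ respect the decomposition. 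The decisive difficulty, and where I expect the real work, is that the maxima defining $\alpha(v)$ and $\beta(v)$ may be attained on \emph{different} blocks, so $\gamma$ cannot be read off block by block. The natural bookkeeping device is the monodromy weight filtration $W(N)$, characterized by $N\,W_c\subseteq W_{c-2}$ together with the isomorphisms $N^{c}\colon \grArtal^{W}_{c}\to\grArtal^{W}_{-c}$; my aim would be to match the sublevel filtration of $\gamma$ against $W(N)$ and its two auxiliary filtrations, reading the $\gamma$-subadditivity as the statement that each level is a subspace and $\gamma(Nv_1)=\gamma(v_1)-2$ as the refinement of $N\,W_c\subseteq W_{c-2}$ to graded pieces. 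Reconciling the block-wise behaviour with this filtration is the technical heart of the argument.
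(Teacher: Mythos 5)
Your treatment of the parts you actually carry out is correct (and your reading of $\alpha$ as a maximum is the intended one, as the paper's own single-block Example confirms); note also that the paper offers no proof at all, declaring the lemma straightforward. However, the two steps you explicitly defer --- the reverse inequality $\beta(Nv_1)\ge\beta(v_1)-1$ and the subadditivity of $\gamma$ --- are not merely the technical heart of the argument: they are false, so no completion via Jacobson--Morozov, block decompositions, or matching against $W(N)$ can exist. Concretely, let $V$ have a Jordan basis $u;w_1,w_2,w_3$ with $Nu=0$, $Nw_1=0$, $Nw_2=w_1$, $Nw_3=w_2$ (blocks of sizes $1$ and $3$). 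Then $\gamma(u)=0+0=0$ and $\gamma(w_2)=1+(-1)=0$, whereas $v:=u+w_2$ has $\alpha(v)=1$ (since $Nv=w_1\ne0$ and $N^2v=0$) and $\beta(v)=0$ (since $N(V)=\langle w_1,w_2\rangle$ does not contain $v$), hence $\gamma(v)=1>\max\bigl(\gamma(u),\gamma(w_2)\bigr)$. The same vector refutes the equalities: $\beta(Nv)=\beta(w_1)=-2\ne-1=\beta(v)-1$, and $\gamma(Nv)=-2\ne-1=\gamma(v)-2$. Your own blockwise computation --- $\alpha$ and $\beta$ are blockwise maxima, possibly attained on \emph{different} blocks --- yields exactly this example the moment you push it one step further; what you identified as the decisive difficulty is in fact a proof of failure, not an obstacle to be overcome.

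What survives is precisely what you proved: subadditivity for $\omega=\alpha,\beta$ (via the subspaces $\{\alpha\le c\}=\ker N^{c+1}$ and $\{\beta\le-c\}=N^c(V)$), the equality $\alpha(Nv_1)=\alpha(v_1)-1$, and the inequalities $\beta(Nv_1)\le\beta(v_1)-1$, $\gamma(Nv_1)\le\gamma(v_1)-2$; on a single Jordan block all claims of the lemma do hold. The failure propagates to the sequel: the set $W_k:=\{v\mid\gamma(v)\le k\}$ of the subsequent Definition need not be a subspace (above, $u,w_2\in W_0$ but $u+w_2\notin W_0$). The standard repair --- and the correct target for your $\mathfrak{sl}_2$/Jordan-block strategy --- is to replace sublevel sets by their spans, i.e.\ to set $W_k=\sum_{j\ge\max(0,-k)}\bigl(\ker N^{j+k+1}\cap N^{j}(V)\bigr)$, equivalently the span of those vectors of a fixed Jordan basis whose $\gamma$-weight is $\le k$ (checking independence of the basis). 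With that definition the properties of Proposition~\ref{prop:weight}, namely $N W_k\subseteq W_{k-2}$ and that $N^k$ induces isomorphisms $\grArtal^W_k\to\grArtal^W_{-k}$, are correct, and your blockwise analysis does establish them.
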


\begin{definition}
The \emph{weight filtration $W$ centered at $0$} of $N$ is defined as 
\[
W_k=W_k(V):=\{v\in V\mid \gamma(v)\leq k\}.
\]
\end{definition}
It is clear that $W_k$ is a subspace and it defines an increasing filtration. In order to understand better this filtration let us check it over 
a Jordan block.

\begin{example}
Let us assume that $\dim V=r$ and there is a basis $v_1,\dots,v_r$ such that $N(v_1)=0$
and $N(v_i)=v_{i-1}$, if $i>1$, i.e., $N$ has a Jordan block of size~$r$. From the definitions:
\[
\alpha(v_i)=i-1,\quad \beta(v_i)=i-r,\quad \gamma(v_i)=2i-(r+1).
\]
We have $(\gamma(v_1),\gamma(v_1),\dots,\gamma(v_r))=(-r-1,-r+1,\dots,r+1)$, i.e., it is 
a symmetric arithmetic sequence of ratio~$2$.
\end{example}

This example is the key for the following properties.

\begin{proposition}\label{prop:weight}
The weight filtration $W$ and the endomorphism $N$ satisfy:
\begin{enumerate}[label=\rm(\alph{enumi})]
\item $N(W_k)\subset W_{k-2}$.
\item For $k\geq 0$, the map $\grArtal_k^W\to\grArtal_{-k}^W$ induced by $N^k$ is an isomorphism.
\item \label{prop:weight_eigen} For $k\geq 0$, $\dim\grArtal_k^W=\dim\grArtal_{-k}^W$ is the number of Jordan blocks of size $k$
in Jordan form of $N$. 
\end{enumerate}

\end{proposition}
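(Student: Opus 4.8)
The plan is to reduce all three assertions to the single--Jordan--block computation carried out in the preceding Example, and to assemble them blockwise.

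I would first settle (a) directly from the Lemma above. Let $v\in W_k$, so $\gamma(v)\le k$. If $N(v)=0$ then $N(v)\in W_{k-2}$ because $\gamma(0)=-\infty$; if $N(v)\ne 0$ the Lemma gives $\gamma(N(v))=\gamma(v)-2\le k-2$, so again $N(v)\in W_{k-2}$. Hence $N(W_k)\subset W_{k-2}$, and iterating, $N^k(W_k)\subset W_{-k}$ and $N^k(W_{k-1})\subset W_{-k-1}$; this already shows that $N^k$ induces a well defined map $\grArtal^W_k\to\grArtal^W_{-k}$, so that the statement of (b) makes sense.

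For (b) and (c) I would fix a Jordan basis of $N$ and decompose $V=\bigoplus_\ell \langle B_\ell\rangle$ into its Jordan blocks, with $B_\ell$ of size $r_\ell$. The preceding Example records that the $j$-th vector of a block of size $r$ has $\gamma$-weight $2j-1-r$, so that the weights occurring inside a size-$r$ block form the symmetric progression $1-r,\,3-r,\,\dots,\,r-1$ of ratio $2$ centered at $0$. The key point is to prove that $W$ is compatible with this decomposition, namely that $W_k=\bigoplus_\ell\big(W_k\cap\langle B_\ell\rangle\big)$ and that $W_k\cap\langle B_\ell\rangle$ is spanned by those basis vectors of $B_\ell$ of weight $\le k$. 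Granting this, $\grArtal^W_k$ acquires a preferred basis given by the block vectors of weight exactly $k$; since $N^k$ carries the weight-$k$ vector of each block to its weight-$(-k)$ vector (by the shift $\gamma(N(v))=\gamma(v)-2$ applied $k$ times), it matches these two preferred bases bijectively and (b) follows. The symmetry $\dim\grArtal^W_k=\dim\grArtal^W_{-k}$ and the identification of these dimensions with the Jordan data in (c) are then obtained by counting, for each level $k$, the blocks whose weight progression reaches $k$, exactly as dictated by the Example.

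The main obstacle is precisely the compatibility of $W$ with the Jordan decomposition: computing $\gamma(v)$ for a vector $v$ spread over several blocks from the weights of its components. This is the delicate point where the behaviour of $\alpha$, $\beta$, $\gamma$ under $N$ and under sums, as collected in the Lemma, must be combined with care, and it is exactly the reduction that the Example is designed to feed. Once one knows that the graded pieces $\grArtal^W_k$ are spanned by homogeneous (single-block) basis vectors, the remaining content of (b) and (c) is bookkeeping, so I would concentrate the work on establishing that reduction and then quote the block-level computation of the Example verbatim.
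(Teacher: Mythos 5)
Your overall plan---reduce everything to the single-block computation of the Example and assemble blockwise---is exactly what the paper intends: the paper gives no proof of this proposition at all beyond the sentence ``This example is the key for the following properties'', and your part (a) is fine (only the inequality $\gamma(N(v))\le\gamma(v)-2$ is needed there, and that direction does hold). The genuine problem is that the step you defer, the compatibility $W_k=\bigoplus_\ell(W_k\cap\langle B_\ell\rangle)$, is not merely delicate: with the definitions as they stand it is \emph{false}, and so is the tool you plan to use for it, namely the Lemma's assertion $\gamma(v_1+v_2)\le\max(\gamma(v_1),\gamma(v_2))$. Take $N$ with one Jordan block of size $3$ (basis $u_1,u_2,u_3$, $Nu_3=u_2$, $Nu_2=u_1$, $Nu_1=0$) and one of size $1$ (basis $w$, $Nw=0$), reading $\alpha$ as a maximum as the Example forces. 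Then $\gamma(u_2)=1+(-1)=0$ and $\gamma(w)=0$, but for $v=u_2+w$ one has $\alpha(v)=1$ (since $Nv=u_1\neq0$) and $\beta(v)=0$ (since $v\notin N(V)=\langle u_1,u_2\rangle$), so $\gamma(v)=1$. Hence $u_2,w\in W_0$ while $u_2+w\notin W_0$: the set $\{v\mid\gamma(v)\le k\}$ is not even a subspace, the Lemma fails for $\gamma$ (the exact shift fails as well: $\gamma(Nv)=\gamma(u_1)=-2\neq\gamma(v)-2$), and the right-hand side of your compatibility contains $u_2+w$ whereas the left-hand side does not. The cross-block interference you worry about in your last paragraph---$\alpha$ governed by one block, $\beta$ by another, which happens whenever two block sizes differ by at least $2$---is a real obstruction, not bookkeeping.

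The argument can be saved, but only after repairing the definition: set $W_k:=\mathrm{span}\{v\in V\mid\gamma(v)\le k\}$, or characterize $W$ as the unique increasing filtration with $N(W_k)\subset W_{k-2}$ and $N^k:\grArtal_k^W\to\grArtal_{-k}^W$ an isomorphism, as in \cite{Schmid:73}. With the span definition your reduction does go through: writing $v=\sum_\ell v_\ell$ in block components, one has $\alpha(v)=\max_\ell\alpha(v_\ell)$ and $\beta(v)=\max_\ell\beta(v_\ell)$, hence $\gamma(v)\ge\max_\ell\gamma(v_\ell)$, so every vector of weight $\le k$ has all block components of weight $\le k$; and inside a single block $\gamma(\sum_i c_iv_i)=\max\{\gamma(v_i)\mid c_i\neq0\}$, so $W_k$ is spanned by single-block basis vectors of weight $\le k$. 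From there your matching of preferred bases under $N^k$ proves (b). One last caution on (c): your count (``blocks whose weight progression reaches $k$'') gives $\dim\grArtal_k^W$ equal to the number of blocks of size at least $k+1$ and congruent to $k+1$ modulo $2$, which is the correct statement; the number of blocks of size exactly $k+1$ is $\dim\grArtal_k^W-\dim\grArtal_{k+2}^W$, so the proposition's literal wording (``size $k$'') needs the same correction your computation silently makes.
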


Sometimes it is convenient to shift this filtration.

\begin{definition}
Let $m\in\mathbb{Z}$; the \emph{weight filtration $W^{(m)}$ centered at $m$} of $N$ is defined as 
$W^{(m)}_k:=W_{k+m}$.
\end{definition}

%

 \providecommand\noopsort[1]{}
\providecommand{\bysame}{\leavevmode\hbox to3em{\hrulefill}\thinspace}
\providecommand{\MR}{\relax\ifhmode\unskip\space\fi MR }
\providecommand{\MRhref}[2]{%
  \href{http://www.ams.org/mathscinet-getitem?mr=#1}{#2}
}
\providecommand{\href}[2]{#2}

\end{document}